\newcommand*{\addFileDependency}[1]{
  \typeout{(#1)}
  \@addtofilelist{#1}
  \IfFileExists{#1}{}{\typeout{No file #1.}}
}
\numberwithin{equation}{section}
\theoremstyle{plain}
\newtheorem{thm}{Theorem}[section]
\newtheorem{lem}{Lemma}[section]
\newtheorem{corollary}{Corollary}[section]
\newtheorem{remark}{Remark}[section]
\newtheorem{definition}{Definition}[section]
\newtheorem{Condition}{Condition}[section]
\newtheorem{proposition}{Proposition}[section]
\newcommand{\EE}[1]{\mathbb E\left[#1\right]}
\newcommand{\PP}[1]{\mathbb P\left[#1\right]}
\newcommand{\ind}{\mathbbm 1}
\begin{document}

\begin{frontmatter}
\title{Testing partial conjunction hypotheses under dependency, with applications to meta-analysis} 
\runtitle{Testing partial conjunction hypotheses under dependency}

\begin{aug}
\author{\fnms{Marina} \snm{Bogomolov}\ead[label=e1]{marinabo@technion.ac.il}}

\address{Faculty of Industrial Engineering and Management, Technion - Israel Institute of Technology,
Haifa, Israel\\
\printead{e1}}



\runauthor{M. Bogomolov}


\end{aug}




\begin{keyword}
\kwd{False discovery rate, global null, meta-analysis, partial conjunction hypothesis, replicability analysis}
\end{keyword}
\begin{abstract}
    In many statistical problems the hypotheses are naturally divided into groups, and the investigators are interested to perform group-level inference, possibly along with inference on individual hypotheses.  We consider the goal of discovering groups containing $u$ or more signals  with group-level false discovery rate (FDR) control. This goal can be addressed by multiple testing of partial conjunction hypotheses with a parameter $u,$ which reduce to global null hypotheses for $u=1.$ We consider the case where the partial conjunction $p$-values are combinations of within-group $p$-values, and obtain sufficient conditions on (1) the dependencies among the $p$-values within and across the groups, (2) the combining method for obtaining partial conjunction $p$-values, and (3) the multiple testing procedure, for obtaining FDR control on partial conjunction discoveries. We consider separately the dependencies encountered in the meta-analysis setting, where multiple features are tested in several independent studies, and the $p$-values within each study may be dependent. Based on the results for this setting, we generalize the procedure of Benjamini, Heller, and Yekutieli (2009) for assessing replicability of signals across studies, and extend their theoretical results regarding FDR control with respect to replicability claims.
\end{abstract}
\end{frontmatter}
\section{Introduction}\label{sec-intro}
In modern research, the investigators often face complex statistical problems, which involve testing a large number of hypotheses. In many applications the hypotheses have a natural group structure, and identifying groups containing signals, (i.e. false null hypotheses), is often of interest. 
For example, in functional magnetic resonance imaging, several types of division into groups have been considered. 
If the goal is to find locations (voxels) of activation while a subject performs a certain cognitive task, each hypothesis corresponds to a single location. The division of the brain into anatomical or functional regions may define groups of hypotheses, and the researchers may be interested to find brain regions containing activated locations (see, for example, \cite{PennyFriston} and \cite{BH07}). 
If several cognitive tasks are studied, one may be interested to find brain locations that are activated in at least two (or, more generally, at least a pre-specified number $u$ of) cognitive tasks. In this case the hypotheses regarding the activation in the same location in different tasks define a group, and the goal is to find groups (i.e. locations) with at least $u$ signals. Motivated by this application, Benjamini and Heller \cite{BH08} introduced the \textit{partial conjunction hypothesis} testing problem, which is defined for group $g$ as
$$H_0^{u/n}(g): k_g<u \text{     versus     } H_1^{u/n}(g): k_g\geq u,$$
where $k_g$ is the number of false null hypotheses in group $g.$ 
In the case where $u=1,$ the partial conjunction null reduces to the global null, or the intersection hypothesis, which holds if all the null hypotheses in the group are true. Benjamini and Heller \cite{BH08} developed a general method for combining  the  $p$-values for the elementary hypotheses in group $g,$  so that the combined $p$-value is valid for testing $H_0^{u/n}(g).$ 
 For the fMRI application above, \cite{BH08} suggested identifying the brain locations which are activated in at least $u$ out of $n$ cognitive tasks by multiple testing of the 
 family $\{H_0^{u/n}(g), \,\,g=1, \ldots G\}$, where $G$ is the number of brain locations considered. In this setting, the false discovery corresponds to rejecting $H_0^{u/n}(g)$ for some group $g$ which contains less than $u$ signals, which, in this application, translates to erroneously stating that the brain location $g,$ with $k_g<u,$ is activated in at least $u$ cognitive tasks.  The false discovery rate (FDR, \cite{BH95}) and the familywise error rate (FWER) for the family of partial conjunction hypotheses are defined with respect to these erroneous group discoveries. Other examples of inference at the group level in fMRI research were given in \cite{FBR15} and \cite{BB14}, who developed different algorithms incorporating group-level inference. Besides fMRI research, there are many other areas where multiple testing of partial conjunction hypotheses has been considered, for example in genomic research, where several hypotheses are tested for each genetic variant (see, e.g., \cite{PetS16}, \cite{HetS16}). 
 
 An important statistical challenge where multiple testing of partial conjunction hypotheses may be very useful is that of meta-analysis. Consider the setting where the same set of features is tested in several independent studies. The hypotheses tested in different studies regarding the same feature may be considered as a group. The global null hypothesis corresponding to a single feature addresses the question: "Is this feature non-null (i.e. corresponds to a false null hypothesis) in at least one of the studies?".
 Identifying the features with a positive answer to this question corresponds to the typical goal of meta-analysis. However, sometimes the evidence that the feature is non-null in at least one study is not enough, and the researchers wish to identify features with consistent findings across studies, which corresponds to the goal of replicability analysis (\cite{BHY09}, \cite{heller2014deciding}). In this case the interest may be to find features which are non-null in at least $u\geq 2$ studies. Benjamini et al. \cite{BHY09} suggested addressing this goal by testing the family $\{H_0^{u/n}(g), \,\,g=1, \ldots, G\},$  where $n$ is the number of studies and $G$ is the number of features considered. The larger is the value of $u,$ the stronger is our requirement for claiming replicability. Since it may be not clear what value of $u$ should be chosen, \cite{BHY09} suggested to select the features with potential replicability findings by applying the Benjamini-Hochberg procedure (BH, \cite{BH95}) on the family of global null $p$-values for all the features, and assess the replicability strength for each selected feature $g$ by a lower bound on the true number of studies where the feature is non-null, $\hat{k}(g),$ obtained from testing the partial conjunction hypotheses in order with $u=1, \ldots, n.$ 
 The authors proved that their procedure controls the expected proportion of features with erroneously reported lower bounds (i.e. features with $\hat{k}(g)>k(g)$), out of all features selected, under independence within each study. 

In this paper, we consider multiple testing of partial conjunction hypotheses with FDR or weighted FDR (\cite{BH97}) control.  
The main goal of this paper is to develop sufficient conditions on (1) the multiple testing procedure, (2) the dependency structure among the $p$-values for the elementary hypotheses (referred to as elementary $p$-values hereafter),  and (3) the method for obtaining partial conjunction $p$-values,  for (weighted) FDR  control on the family of partial conjunction hypotheses. 
 This goal is addressed by building upon the results of \cite{blanchard2008two}, 
 \cite{blan2009}, \cite{RetJ17}, \cite{BH08}, and \cite{BHY09}. 
 We consider separately testing the partial conjunction hypotheses with (weighted) FDR control under the dependencies of the meta-analysis setting. 
 We generalize the theoretical results of \cite{BHY09} for their procedure for assessing the strength of replicability for the selected features, showing that this procedure provides the above error rate control regarding false replicability claims not only under independence within each study, but also under a certain form of  positive  dependence among the $p$-values within each study, which is typically considered in multiple testing literature and is defined below.
 \begin{definition}[PRDS, \cite{BY01}]
 The vector $X=(X_1, \ldots, X_m)$ is positive regression dependent on a subset $I_0\subseteq\{1, \ldots, m\}$ if for any non-decreasing set $D,$ and for each $i\in I_0,$ $\PP{X\in D\,|\,X_i=x}$ is non-decreasing in $x.$
 \end{definition}
 In addition, we extend the procedure of \cite{BHY09} for addressing (1) a wide class of selection rules for selecting the promising features, (2) weights reflecting prior knowledge on the replicability extent of the features, as well as weights reflecting different penalties for erroneous replicability claims (similar to \cite{genovese2006false} and \cite{BH97}, respectively), and obtain the same error control guarantee for the extended procedure. For arbitrary dependence within each study, a more conservative variant of this procedure is suggested. 
 The remaining contributions of this paper are described in more detail below.


For common multiple testing procedures, the question ``What are the dependencies among the $p$-values, for which the procedure guarantees the control of its target error rate?" was typically addressed. For example, according to the results of \cite{BY01}, the BH procedure  guarantees FDR control when applied on $p$-values which satisfy the PRDS property (its FDR control under independence was initially proved by \cite{BH95}). 
However, when this type of dependence holds among the elementary $p$-values, it is not clear whether it holds among the partial conjunction $p$-values, which may be complicated functions of the within-group elementary $p$-values.
As a special case, consider global null $p$-values based on the method of Simes \cite{Simes1986} (see Section \ref{sec-GN} for the definition). Based on the results of \cite{BY01}, the BH procedure applied on Simes' $p$-values controls the FDR on the family of global null hypotheses if Simes' $p$-values satisfy the PRDS property, however it is not clear for what dependency structures among the  elementary $p$-values this condition holds. 
In fact, in the lecture devoted to Prof. Yoav Benjamini's 70th birthday, which was held in Israel in 2018, Prof. Abba Krieger showed that when the elementary $p$-values satisfy the PRDS property, the global null Simes' $p$-values may not satisfy this property. However, as Prof. Krieger emphasized, this does not yield that the BH procedure applied on Simes' global null $p$-values does not control the FDR in this case, because the PRDS property is a sufficient, not a necessary condition, for FDR control of the BH procedure.  Interestingly, indeed, the results of \cite{RetJ17} yield that the BH procedure applied on Simes' $p$-values guarantees FDR control on the family of global null hypotheses if the set of elementary $p$-values satisfies the PRDS property. Benjamini and Heller \cite{BH08} showed a similar result for the BH procedure applied on partial conjunction $p$-values based on Fisher's or Stouffer's methods, assuming a certain dependency structure among the elementary  $p$-values. 
In this work these results are generalized with respect to several aspects, which are described below.

 Sufficient conditions on the dependency among the elementary $p$-values for FDR control on partial conjunction hypotheses are developed for a general class of \textit{self-consistent} procedures, defined by \cite{blanchard2008two}, which includes the BH procedure, as well as its weighted variants (see \cite{BH97}, \cite{genovese2006false}, and \cite{blanchard2008two}). This class also includes all procedures which always reject a subset of the hypotheses rejected by the BH procedure. Although these procedures are more conservative than the BH procedure, they may be of interest due to structural constraints on the rejection set, which may be induced by spatial or graph structure of the hypotheses, see, e.g., \cite{filtering} and \cite{BetS17a}. 

Besides the PRDS dependence among the $p$-values for the initial set of elementary hypotheses, more complex forms of dependencies are considered. Specifically, we consider a certain form of positive dependence across groups and independence within each group, as well as positive dependence across groups and arbitrary dependence within the groups. Independence and arbitrary dependence across the groups are also considered. Finally, 
we consider the dependencies which may hold in meta-analysis, where each feature is tested in several independent studies, addressing independence, positive dependence, and arbitrary dependence within each study. 
The results of this paper show that for most of the above types of  dependencies across and within groups, the BH procedure, as well as its weighted variants,  guarantees FDR control when applied on certain valid partial conjunction $p$-values. Therefore, despite the complex dependencies among the partial conjunction $p$-values in these situations, which may not satisfy the PRDS property,  there is no need for conservative adjustments that were suggested for arbitrary dependence, such as the adjustment of \cite{BY01}, or its generalization given by \cite{blanchard2008two}.

The sufficient conditions that are obtained for FDR control depend on the method for obtaining partial conjunction $p$-values. We consider partial conjunction $p$-values which are connected to self-consistent multiple testing procedures, such as those based on Simes' and Bonferroni methods. In particular, we address the global null $p$-values connected to the adaptive self-consistent procedures, such as the adaptive BH procedure incorporating Storey's estimator for the proportion of nulls \cite{SetS04}, and their extensions to general partial conjunction $p$-values, based on the method of \cite{BH08}. For the meta-analysis setting, we also consider the popular Fisher's (\cite{fisher1932statistical}) and Stouffer's (\cite{stouffer1949american}) methods for combining $p$-values for testing the partial conjunction null. 

For obtaining the results above, several lemmas in the spirit of the group-level superuniformity lemma of \cite{RetJ17} are obtained. 
These lemmas are used for generalizaing the results of \cite{BHY09} regarding their procedure for replicability analysis, as described above. The lemmas may be useful for obtaining FDR control on partial conjunction hypotheses for other  procedures, such as
p-filter \cite{RetJ17}, DAGGER \cite{ramdas2019sequential}, or TreeBH \cite{bogomolov2020hypotheses}. The results obtained in this paper may also be useful for obtaining  FWER control guarantees on partial conjunction hypotheses under certain dependency structures among the elementary $p$-values.
This paper is organized as follows. 
Section \ref{sec-agg} includes several  global null tests which are addressed in this work, as well as their extensions for testing partial conjunction hypotheses. 
Section \ref{mult-PC} considers the setting where the hypotheses are divided into natural groups, and gives several sufficient conditions for FDR control on a family of partial conjunction hypotheses defined for these groups, for several dependency structures. 
In Section \ref{meta-analysis} we consider the meta-analysis setting, and give some results for partial conjunction testing in this setting, as well as a generalization of the procedure and the results of \cite{BHY09} addressing replicability analysis. All the proofs are given in the Appendix. Several remarks regarding the proofs, as well as key lemmas, are given in Section \ref{sec-lemmas}. The paper ends by a discussion in Section \ref{discussion}.

\begin{remark}\label{technical}
Throughout this paper, vectors and matrices are denoted by bold letters, and $p$-values for composite hypotheses comprising combinations of elementary $p$-values  are denoted by capital letters. In addition, we define $0/0=0,$ and for two random variables $X$ and $Y$ such that if $Y=0,$ then $X=0$ almost surely, we define $X/Y=0.$
\end{remark}

\section{Aggregate-level testing of a single hypothesis}\label{sec-agg}
\subsection {Testing a global null hypothesis}\label{sec-GN}
Consider a family of $m$ elementary null hypotheses,  $H_1, \ldots, H_m,$ with $p$-values $p_1, \ldots, p_m.$  The global null hypothesis is defined as the intersection of these null hypotheses, stating that all these null hypotheses are true, $H_0=\cap_{i=1}^m H_i.$ 
We consider the cases where the $p$-value for the global null is obtained by combining the $p$-values for the individual hypotheses, $H_1, \ldots, H_m,$ via a certain combining function $f: [0,1]^m\rightarrow [0,1].$
The combination of $p$-values $f(p_1, \ldots, p_m)$ is a valid $p$-value for the global null 
if for any $x\in(0,1),$  $\PP{f(p_1, \ldots, p_m)\leq x}\leq x$ under the global null, i.e. when all the null hypotheses are true.
Several methods for combining the $p$-values $p_1, \ldots, p_m$ for global null testing have been developed and studied, see, e.g., \cite{loughin2004systematic}, \cite{owen2009karl}, \cite{vovk2020combining},  \cite{DJ04}, \cite{wilson2019harmonic}, and references therein.
In this work the focus is on the following methods.


\begin{itemize}\item Fisher's method \cite{fisher1932statistical}:
\begin{align}
f_{\text{Fisher}}(p_1, \ldots, p_m)=\PP{\chi^2_{(2m)}\geq -2\sum_{i=1}^m\log(p_i)},\label{Fisher}
\end{align}
where $\chi^2_{(2m)}$ is a chi-square variable with $2m$ degrees of freedom. 
\item Stouffer's method \cite{stouffer1949american}:
\begin{align}
f_{\text{Stouffer}}(p_1, \ldots, p_m)=1-\Phi\left(\frac{\sum_{i=1}^m\Phi^{-1}(1-p_i)}{\sqrt{m}}\right)\label{Stouffer},
\end{align}
with $\Phi(\cdot)$ denoting the cumulative distribution function of a standard normal variable, and $\Phi^{-1}(\cdot)$ denoting its inverse function.
\item Simes' method \cite{Simes1986}:
\begin{align}f_{\text{Simes}}(p_1, \ldots, p_m)=\min_{k=1, \ldots, m}\frac{mp_{(k)}}{k},\label{Simes}
\end{align}
where $p_{(1)}\leq\ldots\leq p_{(m)}$ is the ordered sequence of $p$-values.
	\end{itemize}
All the methods above give  valid global null $p$-values  under independence of individual $p$-values. Simes' method is known to be also valid under certain forms of positive dependence, see \cite{sarkar1998some}, \cite{BY01}, \cite{finner2017simes}. For general dependence, the following more conservative methods are considered:
\begin{itemize}
\item Bonferroni's method:
\begin{align}
f_{\text{Bonferroni}}(p_1, \ldots, p_m)=\min\{mp_{(1)},\,\, 1\}\label{Bonf}
\end{align}
\item Hommel's method \cite{hommel1983tests}:
\begin{align}
f_{\text{Hommel}}(p_1, \ldots, p_m)=\min\left\{\left(1+\frac{1}{2}+\ldots+\frac{1}{m}\right)\min_{k=1, \ldots, m}\frac{mp_{(k)}}{k},\,\, 1\right\}\label{Hommel}
\end{align}
\end{itemize}

Some of the methods for global null testing are connected to certain multiple testing procedures. For example, if one uses Bonferroni's method (\ref{Bonf}) for testing the global null,  then the global null is rejected at level $\alpha$  if and only if Bonferroni's procedure, applied at level $\alpha$ on the $p$-values $p_1, \ldots, p_m,$ rejects at least one hypothesis.  There is a similar connection between Simes' $p$-value (\ref{Simes}) and the BH procedure, as well as
Hommel's $p$-value (\ref{Hommel}) and the procedure of \cite{BY01} (BY), which is  a conservative variant of the BH procedure with FDR control guarantees under arbitrary dependence. In all these cases, the global null $p$-value is the minimal adjusted $p$-value of a certain multiple testing procedure. Recall that for a given multiple testing procedure, an adjusted $p$-value for $H_i,$ denoted by $P_i^{adj},$ is the minimum level $\alpha$ of the procedure such that $H_i$ is rejected at level $\alpha$ (\cite{wright1992adjusted}). The set of rejected hypotheses by a given level-$\alpha$ procedure is $\{i: P_i^{adj}\leq \alpha\},$ therefore at least one rejection is made if and only if the minimum adjusted $p$-value, $P_{(1)}^{adj},$ is bounded above by $\alpha.$ 
Under the global null, each rejected hypothesis corresponds to a false discovery, therefore in this case $\PP{P_{(1)}^{adj}\leq \alpha}=\PP{(V>0)},$ where $V$ is the number of false discoveries of the given level-$\alpha$ multiple testing procedure. In addition, under the global null, $\PP{(V>0)}=\text{FWER}=\text{FDR}$, which gives the following corollary.
\begin{corollary}\label{connection-cor}
	Assume we are given a multiple testing procedure which guarantees control of FWER (or FDR) when all the null hypotheses are true (i.e. in the weak sense) under a certain dependency structure among the $p$-values. Then, under the same dependency structure, the minimum adjusted $p$-value of this procedure is a valid $p$-value for testing the global null.
\end{corollary}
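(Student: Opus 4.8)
The plan is to take $P := P_{(1)}^{adj}$, the minimum adjusted $p$-value of the given procedure, and to verify directly the defining inequality of a valid global null $p$-value, namely $\PP{P \leq x} \leq x$ for every $x \in (0,1)$, under the global null and under the specified dependency structure. Essentially all the ingredients have already been assembled in the paragraph preceding the statement; my task is to string them together and to make the quantification over all thresholds explicit.

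First I would fix an arbitrary level $\alpha \in (0,1)$ and recall the characterization of the rejection set noted above: running the procedure at level $\alpha$ rejects exactly the hypotheses in $\{i : P_i^{adj} \leq \alpha\}$. Hence at least one rejection is made if and only if the smallest adjusted $p$-value satisfies $P_{(1)}^{adj} \leq \alpha$, i.e.\ $\{P \leq \alpha\} = \{R_\alpha > 0\}$, where $R_\alpha$ denotes the number of rejections at level $\alpha$. Next I would invoke the global null: since every elementary null is true, every rejection is a false discovery, so the number of false discoveries $V_\alpha$ equals $R_\alpha$, giving $\{P \leq \alpha\} = \{V_\alpha > 0\}$ and therefore $\PP{P \leq \alpha} = \PP{V_\alpha > 0}$. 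As observed above, under the global null the realized FDP equals $\ind\{V_\alpha > 0\}$, so $\PP{V_\alpha > 0} = \fwer_\alpha = \fdr_\alpha$.

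Finally I would apply the weak-control hypothesis. By assumption, for each level $\alpha$ the procedure controls FWER (equivalently, by the previous identity, FDR) under the global null and the given dependency, so $\PP{V_\alpha > 0} \leq \alpha$. Chaining the three relations yields $\PP{P \leq \alpha} \leq \alpha$, and since $\alpha \in (0,1)$ was arbitrary this is precisely the validity condition with $x = \alpha$, completing the argument.

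There is no serious obstacle: the entire content sits in the equivalence $\{P \leq \alpha\} = \{V_\alpha > 0\}$ together with the coincidence of FWER, FDR, and $\PP{V > 0}$ under the global null, both of which are already established in the text. The only point demanding care is the quantification over $\alpha$: the assumption must be read as weak control at the nominal level for \emph{every} $\alpha \in (0,1)$, and the adjusted-$p$-value characterization of the rejection set must hold uniformly in $\alpha$, which presupposes the usual monotonicity of the rejection set in the level $\alpha$ implicit in the definition of $P_i^{adj}$.
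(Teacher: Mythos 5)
Your proposal is correct and follows essentially the same route as the paper: the identity $\{P_{(1)}^{adj}\leq \alpha\}=\{V_\alpha>0\}$ under the global null, the coincidence $\PP{V_\alpha>0}=\fwer=\fdr$ there, and the weak-control assumption chained together for every level $\alpha$. Your added remark about uniformity in $\alpha$ is a sensible clarification but does not change the argument.
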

\begin{remark}\label{monotone}
A natural requirement for a function $f(p_1, \ldots, p_m)$ which gives a global null $p$-value is that $f$ is non-decreasing in each coordinate. In fact, Birnbaum \cite{birnbaum1954combining} showed that when the $p$-values are independent, for each combination function $f(p_1, \ldots, p_m)$ satisfying this requirement one can find an alternative hypothesis against which the combination function $f(p_1, \ldots, p_m)$ gives a best test of the global null. 
This requirement is satisfied when $f$ is based on a certain multiple testing procedure in the sense of Corollary \ref{connection-cor}, assuming that the multiple testing procedure satisfies the following natural monotonicity property: given a set of rejected hypotheses $\mathcal{R}_1$, decreasing a certain $p$-value will result in rejecting the set $\mathcal{R}_2,$ where $\mathcal{R}_1\subseteq \mathcal{R}_2.$ 
\end{remark}
Based on Corollary \ref{connection-cor}, one can obtain validity of global null tests based on the results for the multiple testing procedures which induce these tests. The fact that Bonferroni's and Hommel's global null $p$-values are valid under any dependence of elementary $p$-values may be obtained from the facts that Bonferroni and BY  control the FWER in the weak sense under arbitrary dependence.
Similarly, one can obtain the validity of Simes' method and its weighted versions (see \cite{BH97} and \cite{Hochberg1994}) under PRDS dependence based on the result of \cite{blanchard2008two}, showing that  the doubly-weighted BH procedure guarantees FDR control under this type of positive dependence. 
Similarly, the minimum adjusted $p$-value of an adaptive variant of the BH procedure, incorporating an estimator for the proportion of true null hypotheses, denoted by $\pi_0,$ is a valid global null $p$-value if the dependency among the $p$-values is such that the procedure guarantees FDR control. See \cite{benjamini2006adaptive} and \cite{blan2009} for several adaptive procedures and their performance comparisons. Let us consider the minimum adjusted $p$-value of the adaptive BH procedure incorporating a modification of Storey's estimator (\cite{Storey2002})  for the proportion of nulls, suggested by  \cite{SetS04}. The estimator is based on a tuning parameter $\lambda\in(0,1),$ and is given by 
\begin{align}
    \hat{\pi}_0(\lambda)=\frac{W(\lambda)+1}{(1-\lambda)m},\label{Storey}
\end{align}
with $W(\lambda)=\sum_{i=1}^m \ind(p_i>\lambda),$ where $\ind(\cdot)$ is the indicator function.
The minimum adjusted $p$-value of this procedure is
\begin{align}\label{Storey-Simes}
f_{Simes-Storey}(p_1, \ldots, p_m)=\begin{cases} \min\left\{\min\limits_{k:p_{(k)\leq \lambda}}\left\{m\hat{\pi}_0(\lambda)p_{(k)}/k\right\},\,\,1\right\} & p_{(1)}\leq \lambda\\ 1, &  p_{(1)}> \lambda \end{cases}
\end{align}
It was proved by \cite{SetS04} that  the adaptive BH procedure with Storey's estimator for $\pi_0$ controls the FDR under independence, therefore the test based on its minimum adjusted $p$-value, $f_{Simes-Storey}(p_1, \ldots, p_m),$ is valid under independence. 
In fact, we address a more general class of global null tests, induced by a certain class of multiple testing procedures, including certain adaptive FDR-controlling procedures. According to Remark \ref{monotone}, the tests induced by monotone multiple testing procedures satisfy the desirable monotonicity property.

\subsection {Testing a partial conjunction hypothesis}\label{PC}
Consider a family of $m$ elementary null hypotheses $H_1, \ldots, H_m,$ and let $m_1$ be the number of false null hypotheses in this family. Benjamini and Heller \cite{BH08} considered testing the partial conjunction null hypothesis $H_0^{u/m}: m_1<u$ versus the alternative $H_1^{u/m}: m_1\geq u,$ for a fixed $u\in\{1, \ldots, m\},$ addressing the question whether the family contains at least $u$  false null hypotheses. 
For $u=1,$ the partial conjunction null reduces to the global null, and the tests developed in \cite{BH08}  reduce to familiar tests, while for $u=n,$ their tests lead to rejecting the null if the maximum $p$-value is below the given significance level $\alpha.$ In general, the $p$-value for testing $H_0^{u/m}$ denoted by $P^{u/m},$ is valid if under this partial conjunction null, i.e. in the case where $m_1<u,$ $\PP{P^{u/m}\leq x}\leq x$ for all $x\in[0,1].$ Benjamini and Heller \cite{BH08} developed the following method for constructing valid partial conjunction $p$-values $P^{u/m}:$ given a certain combining function $f$ for the global null hypothesis, which is valid for the dependency among the $p$-values in any subgroup $A\subseteq\{1, \ldots, m\}$ of size $m-u+1$ and which is non-decreasing in each of its components, obtain a pooled $p$-value $P^{u/m}$ by applying the function $f$ on the $m-u+1$ largest $p$-values. Since the combining function $f$ is non-decreasing in each of its components, this method is equivalent to computing the global null combined $p$-value for each subgroup $A\subseteq \{1, \ldots, m\}$ of size $m-u+1$ and taking the maximum of all the combined $p$-values. Denoting by $\bm p_A$ the set of $p$-values for the hypotheses in group $A,$ $P^{u/m}$ can be written as follows:  
$$P^{u/m}=\max\{f(\bm p_A): A\subseteq \{1, \ldots, m\}, |A|=m-u+1\}. $$

See \cite{BH08} for partial conjunction $p$-values based on Fisher's, Stouffer's, Simes', and Bonferroni's methods.
 Let us obtain the  partial conjunction $p$-value for testing $H_0^{u/m},$  based on the combining function (\ref{Storey-Simes}), which is  non-decreasing in each coordinate. For obtaining $P^{u/m},$ we apply the combining function (\ref{Storey-Simes}) on the largest $m-u+1$ $p$-values. Let \begin{align*}
\hat{\pi}_0^{u/m}(\lambda)=\frac{1+\sum_{i=u}^m\ind(p_{(i)}>\lambda)}{(m-u+1)(1-\lambda)},
\end{align*} 
and define $S(\lambda)=\{k\in\{1, \ldots, m-u+1\}: p_{(u-1+k)}\leq \lambda\}.$ Then 
\begin{align}
P^{u/m}_{Simes-Storey}= \begin{cases}\min\left\{\min\limits_{k\in S(\lambda)}\left\{(m-u+1)\hat{\pi}_0^{u/m}(\lambda)p_{(u-1+k)}/k\right\},\,\,1\right\} & p_{(u)}\leq \lambda\\ 1, &  p_{(u)}> \lambda \end{cases}\label{PC-Storey}
\end{align} 
Since the combining function (\ref{Storey-Simes}) gives a valid global null $p$-value under independence among $p_1, \ldots, p_m,$ the partial conjunction $p$-value given above is valid for testing $H_0^{u/m}$ under independence.

\section{Multiple testing of partial conjunction hypotheses}\label{mult-PC}

 \subsection{Preliminaries}\label{setting}

 Assume one has $M$ elementary null hypotheses, $H_1, \ldots, H_M,$ with  $p$-values $p_1, \ldots, p_M, $ respectively. The set of hypotheses is divided into $G$  groups. 
 Let $A_g\subseteq\{1, \ldots, M\}$ be the set of indices of hypotheses belonging to group $g,$ and let $n_g=|A_g|$ be the number of hypotheses in group $g.$ 
 Denote by $k_g$ the (unknown) number of false null hypotheses in group $g.$ 
 
 The researcher is interested in testing the family of $G$ partial conjunction hypotheses for the given groups, $\{H_0^{u_g/n_g},\,\, g=1, \ldots, G\},$ based on partial conjunction $p$-values $\{P_g^{u_g/n_g},\,\, g=1, \ldots, G\},$ where
 for each $g\in\{1, \ldots, G\},$ $P_g^{u_g/n_g}$ 
 is a combination of $p$-values for the hypotheses in group $g.$   When $u_g=1$ for $g=1, \ldots, G,$ this is the family of global null hypotheses. When the group sizes are equal, i.e. $n_g=n$ for $g=1, \ldots, G,$ it seems natural to test partial conjunction hypotheses with a common value $u_g=u$ for each group. However, 
 when the groups are of different sizes and the researcher wishes to identify groups in which at least a proportion $p\in(0,1]$ of the null hypotheses are false, the values of $u_g$ may differ across groups. Let $\mathcal{G}_0\subseteq\{1, \ldots, G\}$ be the set of indices for which the partial conjunction null hypotheses are true, i.e. $\mathcal{G}_0=\{g\in\{1, \ldots, G\}: k_g<u_g\}.$ 
 
 We consider the case where a certain multiple testing procedure receives as input the partial conjunction $p$-values $\{P^{u_g/n_g},\,\, g=1, \ldots, G\},$ and outputs the set indices of rejected partial conjunction hypotheses, $\mathcal{R}=\mathcal{R}(p_1,\ldots, p_M)\subseteq\{1, \ldots, G\}.$ The FWER for the family of partial conjunction hypotheses is 
 $$\PP{|\mathcal{R}\cap\mathcal{G}_0|>0},$$
 while the (weighted) FDR, introduced in \cite{BH97}, for the family of partial conjunction hypotheses is
  \begin{align}\text{FDR}_v^{\text{PC}}:=\EE{\frac{\sum_{g\in \mathcal{G}_0}v_gI(g \in \mathcal{R})}{\sum_{g=1}^Gv_gI(g \in \mathcal{R})}},\label{FDR-PC}\end{align}
 where the weights $\{v_g, \,\,g=1, \ldots, G\}$ reflect the penalties for false rejections of partial conjunction hypotheses, and the rewards for true rejections. 
 The regular  FDR is covered by the case where all the weights are equal to unity. Benjamini and Hochberg \cite{BH97} developed a variant of the BH procedure controlling the weighted FDR. The partial conjunction hypotheses may also be associated with prior weights $(w_1, \ldots, w_G),$ reflecting prior beliefs regarding their truth status, which may be used for increasing the power of a multiple testing procedure, as suggested by (\cite{genovese2006false}). Following  (\cite{blanchard2008two}), we consider the general case where the partial conjunction hypotheses are associated with both types of weights.   
 
 In the following sections, we provide sufficient conditions 
  for (weighted) FDR  control on the family of partial conjunction hypotheses, $\{H_0^{u_g/n_g}, \,\,g=1, \ldots, G\}.$ Let $\bm p^{-g}$ be the vector of $p$-values for the hypotheses not belonging to group $g,$ and for each $i\in A_g,$ let $\bm p_g^{-i}$ be the vector of $p$-values for the hypotheses in group $g,$ excluding $H_i.$
  We consider the following dependency structures among the elementary $p$-values:
  \begin{enumerate}  
  	\item[D1] Independence across groups: the $p$-values in each group are independent of the $p$-values in any other group.
  	\item[D2] Independence within each group: for each $g\in\{1, \ldots, G\},$ the $p$-values in the set $\{p_i,\,\, i\in A_g\}$ are independent.
  	\item[D3] Overall positive dependence: the set of all the $p$-values, $p_1, \ldots, p_M,$ is PRDS on the subset of true null hypotheses.
  	  	\item[D4] Conditional positive dependence across groups: for each $g\in\{1, \ldots, G\},$ and each true null hypothesis $H_i$ in group $g,$ the vector $\bm p$ 
  	  	is PRDS  with respect to $p_i,$ conditionally on $\bm p_g^{-i},$ i.e. $\PP{\bm p\in D\,|\,p_i=x, \bm p_g^{-i}=\bm y}$ is non-decreasing in $x,$ for any non-decreasing set $D\in [0,1]^{M},$ and any vector $\bm y\in [0,1]^{n_g-1}.$
  	  	\item[D5] Positive dependence across groups: for each $g\in\{1, \ldots, G\},$ and each true null hypothesis $H_i$ in group $g,$ the vector $(\bm p^{-g}, p_i),$ consisting of $p_i$ along with  $p$-values for the hypotheses not belonging to group $g$, 
  	  	is PRDS  with respect to $p_i,$ i.e. $\PP{(\bm p^{-g}, p_i)\in D\,|\,p_i=x}$ is non-decreasing in $x,$ for any non-decreasing set $D\in [0,1]^{M-n_g+1}.$ 
  	 \end{enumerate}
   The settings where the dependency across the groups is such that the partial conjunction $p$-values are PRDS on the subset of true partial conjunction nulls, or the dependency among the partial conjunction $p$-values is unspecified, are also considered. The meta-analysis setting, where the $p$-values are independent across studies, and may be dependent within studies, is considered separately in Section \ref{meta-analysis}. 
\subsection{Testing partial conjunction hypotheses with FDR control}
\subsubsection{Preliminary definitions}\label{prel}
Blanchard and Roquain \cite{blanchard2008two} developed two sufficient conditions for FDR control of a multiple testing procedure. In this section we review the definitions of \cite{blanchard2008two} and \cite{blan2009} which shall be used for obtaining sufficient conditions for FDR control on partial conjunction hypotheses. 
 Consider a family of $m$ hypotheses $H_1, \ldots, H_m,$ with  $p$-values $\bm{p}=(p_1, \ldots, p_m),$ which are valid, in the sense that for any true null hypothesis $H_i,$ $\PP{p_i\leq x}\leq x$ for all $x\in [0,1].$ Assume that these hypotheses are associated with a vector of prior weights $\bm  w=(w_1, \ldots, w_m),$ and a vector of penalty weights $\bm v=(v_1, \ldots, v_m).$ Recall that given a rejection set $\mathcal{R}$ and the set of indices of true null hypotheses $M_0\subseteq\{1, \ldots, m\},$ the weighted FDR with penalty weights given by $\bm v$ is defined as 
 $$\text{FDR}_{v}=\EE{\frac{\sum_{i\in M_0}v_i I(i\in \mathcal{R})}{\sum_{i=1}^m v_i I(i\in \mathcal{R})}}.$$ 
 Define $|A|_{\bm v}\equiv \sum_{i=1}^m v_i I(i\in \mathcal{A})$ as the volume of the set $A,$ for any  $A\subseteq\{1, \ldots, m\}.$ 
 Then the weighted FDR can be written as
 $$\EE{\frac{|M_0\cap \mathcal{R}|_{\bm v}}{|\mathcal{R}|_{\bm v}}}.$$
 Note that for 
 $\bm v=(1, \ldots, 1),$ $|\cdot|_{\bm v}$ is simply the cardinality measure, and the weighted FDR reduces to the regular FDR. 
  A threshold collection of a multiple testing procedure used for testing the family $\{H_1, \ldots, H_m\}$  was defined by Blanchard and Roquain \cite{blanchard2008two} as follows. 
\begin{definition}[Blanchard and Roquain, \cite{blanchard2008two}]
	A threshold collection $\Delta$ is a  function
	$$\Delta: (i, r)\in \{1, \ldots, m\}\times \mathbb R^+\mapsto\Delta(i, r)\in \mathbb R^+, $$
	which is non-decreasing in its second variable.
\end{definition}
Following \cite{blanchard2008two}  we consider the factorized  threshold collection of  the following form:
$\forall (i, r)\in\{1, \ldots, m\}\times \mathbb R^+,$
\begin{align}\Delta(i,r)=\alpha w_i\beta(r)/m,\label{non-adapt}\end{align}
where $w_i$ is a prior weight for $H_i,$ $i=1, \ldots, m,$ and $\beta: \mathbb R^+\rightarrow \mathbb R^+$ is a non-decreasing function called a shape function. Following \cite{blan2009}, we also consider an adaptive threshold collection of the following form: $\forall (i, r)\in\{1, \ldots, m\}\times \mathbb R^+,$
\begin{align}
\Delta(i,r)=\Delta(r)=\alpha r \hat{\pi}_0^{-1}/m,\label{adapt1}
\end{align}
and $\hat{\pi}_0$  is the estimator for $\pi_0,$ the proportion of nulls. Given a threshold collection $\Delta,$  define $L_{\Delta}(r)$ for $r\in\mathbb R^+$  as in \cite{blanchard2008two}, i.e. 
$$L_{\Delta}(r)=\{i\in \{1, \ldots, m\}: p_i\leq \Delta(i, r)\}.$$
We are now ready to review several properties of multiple testing procedures, which shall be considered in the following sections. 
\begin{definition}\label{blan} 
Let $\mathcal{R}=\mathcal{R}(\bm{p})\subseteq\{1, \ldots, m\}$  be the set of indices of rejected hypotheses by a given multiple testing procedure. For each $i\in\{1, \ldots, m\},$ let $\bm{p}^{-i}$ be the vector of $p$-values excluding $p_i.$ 
\begin{enumerate} \item A multiple testing procedure is \textbf{self-consistent} with respect to threshold collection $\Delta$ of form (\ref{non-adapt}) or (\ref{adapt1}) if the following inclusion holds almost surely:
\begin{align}\mathcal{R}\subseteq L_{\Delta}(|\mathcal{R}|_v).\label{SC} \end{align}
\item The multiple testing procedure is \textbf{non-increasing} if  $|\mathcal{R}(\bm{p})|_v$ is non-increasing in each $p$-value.
\item The multiple testing procedure is \textbf{stable} if for each $i\in \mathcal{R},$ fixing $\bm{p}^{-i}$ and changing $p_i$ as long as $i\in \mathcal{R}$ will not change the set $\mathcal{R}.$ For such procedure, we denote by $\mathcal{R}^{-i}=\mathcal{R}^{-i}(\bm p^{-i})$  the rejection set which is obtained when the $p$-values for all the hypotheses except $H_i$ are given by $\bm{p}^{-i},$ and $p_i$ has any value such that $H_i$ is rejected.
\item A stable multiple testing procedure is \textbf{concordant} if for each $i\in \{1,\ldots,m\},$ $|\mathcal{R}^{-i}|_v$ is non-increasing in each coordinate of $\bm{p}^{-i}.$ 
\end{enumerate}
\end{definition}
The definitions in items 1 and 2 were given in \cite{blanchard2008two} and \cite{blan2009}. The property in item 3 was defined in \cite{MeRutiJASA}. Properties similar to those  in items 3 and 4 were defined in  \cite{BB14}  and \cite{BY05j}.  
Blanchard and Roquain \cite{blanchard2008two} showed that any step-up and any step-down procedure, or more generally, step-up-down procedure (\cite{tamhane1998generalized}) is self-consistent. The monotonicity defined in item 2 is a very natural property, which seems to be satisfied by all commonly used multiple testing procedures. The BH procedure and its weighted variants, suggested by \cite{BH97} and \cite{genovese2006false}, are special cases of the doubly-weighted BH procedure considered by \cite{blanchard2008two}, which satisfies all the properties above (see Appendix \ref{proofs:corBH} for a proof). 
It is easy to see that any procedure which always rejects a subset of the hypotheses rejected by the BH procedure or one of its weighted variants above is  self-consistent with respect to thresholds of form (\ref{non-adapt}) with identity shape function. Such procedures were developed for addressing structural constraints on the rejection set, which are natural in some applications. Examples are the procedure in \cite{BetS17a} which was suggested for controlling the FDR in genome-wide association studies, and its generalization suggested in \cite{filtering} for FDR control on filtered set of discoveries.
For adaptive multiple testing procedures, let us consider the following conditions on $\hat{\pi}_0,$
the estimator for $\pi_0,$ considered by \cite{blan2009}:
\begin{Condition}\label{cond-adapt}
(a) The estimator $\hat{\pi}_0: [0,1]^m\rightarrow(0, \infty)$ is a measurable, coordinatewise  nondecreasing function. (b) If the $p$-values $(p_1, \ldots, p_m)$ are independent, then for any true null hypothesis $H_i,$ $\EE{1/\hat{\pi}_0(\bm{p}_{0,i})}\leq1/\pi_0,$ where $\bm{p}_{0,i}$ 
is the vector of $p$-values where $p_i$ is replaced by 0.
\end{Condition}
The condition that the estimator $\hat{\pi}_0$ is nondecreasing in each $p$-value
is very natural, and it is satisfied by Storey's estimator (\ref{Storey}). Benjamini, Krieger, and Yekutieli \cite{benjamini2006adaptive} proved that this estimator satisfies part (b) of Condition \ref{cond-adapt} as well.  See Corollary 13 of \cite{blan2009} for other estimators satisfying Condition \ref{cond-adapt}.
In the next sections we rely on the definitions above for formulating sufficient conditions for FDR control on partial conjunction hypotheses. 

 \subsubsection {Results for general valid partial conjunction $p$-values}\label{suffic}
 We assume hereafter that for each $g=1, \ldots, G,$ the partial conjunction $p$-value $P_g^{u_g/n_g}$ is a combination of the $p$-values for the hypotheses belonging to $A_g.$ 
 Then the dependence among the partial conjunction $p$-values is induced by the combining method and the dependence among the elementary $p$-values. The following proposition shows several  sufficient conditions for FDR control on partial conjunction hypotheses for general valid partial conjunction $p$-values. 
  \begin{proposition}\label{main-FDR}
 	Let $\{H_0^{u_g/n_g},\,  g=1, \ldots, G\}$ be a family of partial conjunction hypotheses associated with penalty weights $\{v_g, \,g=1, \ldots, G\}$ and prior weights $\{w_g, \,g=1, \ldots, G\}$ satisfying $\,\sum_{g \in \mathcal{G}}v_gw_g=G.$ Let $\{P_g^{u_g/n_g}, \, g=1, \ldots, G\}$ be the set of corresponding partial conjunction $p$-values, and assume that the dependency among the elementary $p$-values within each group is such that these partial conjunction $p$-values are valid for testing the family $\{H_0^{u_g/n_g}, \,\, g=1, \ldots, G\}.$ 
 	Consider the following assumptions:
 	\begin{enumerate}
 		\item The $p$-values in each group are independent of the $p$-values in any other group, and the multiple testing procedure satisfies either of the following conditions:
 		\begin{itemize}
 			\item[1.1] The procedure is non-increasing and is self-consistent with respect to thresholds of form (\ref{non-adapt}) with identity shape function $\beta(x)=x.$
 			\item[1.2] The procedure is non-increasing and is self-consistent with thresholds of form (\ref{adapt1}), where $\hat{\pi}_0$ satisfies Condition \ref{cond-adapt}.
 		\end{itemize}  
 	\item The set of partial conjunction $p$-values $\{P_g^{u_g/n_g},\,g=1, \ldots, G\},$ satisfies the PRDS property on the subset of $p$-values corresponding to true partial conjunction hypotheses, i.e. those with indices in $\mathcal{G}_0$; the multiple testing procedure is non-increasing and is self-consistent with respect to thresholds of form (\ref{non-adapt}) with identity shape function $\beta(x)=x.$
 		\item The $p$-values across groups are arbitrarily dependent, and the multiple testing procedure is self-consistent with respect to thresholds 
 		of form (\ref{non-adapt})  with the shape function  of the form
 		\begin{align}\beta_{\nu}(r)=\int_{0}^{r}xd\nu(x),\label{beta}\end{align}
 		where $\nu$ is an arbitrary probability distribution on $(0,\infty).$ 
 	\end{enumerate} 
 If the assumptions of either of the three items above are satisfied, 
 then the multiple testing procedure guarantees $\text{FDR}_v^{PC}\leq \alpha.$
 \end{proposition}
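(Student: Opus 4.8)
The plan is to treat the vector of partial conjunction $p$-values $\bm P = (P_1^{u_1/n_1}, \ldots, P_G^{u_G/n_G})$ as a family of $G$ valid $p$-values for the $G$ partial conjunction hypotheses, and to reduce each of the three cases to the corresponding FDR-control theorem of \cite{blanchard2008two} and \cite{blan2009} applied at the group level. By assumption, for each $g\in\mathcal{G}_0$ the combined $p$-value $P_g^{u_g/n_g}$ is superuniform, so $\bm P$ is a valid family of $p$-values for $\{H_0^{u_g/n_g}\}$ in the sense required by the cited results, and the normalization $\sum_{g\in\mathcal{G}}v_gw_g=G$ is exactly the weight normalization under which their bounds read $\text{FDR}_v\leq\alpha$. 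It therefore remains to verify, in each of the three cases, that $\bm P$ satisfies the dependency hypothesis of the relevant theorem, after which the conclusion follows directly.

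For Case 1, I would first observe that each $P_g^{u_g/n_g}$ is a measurable function of the elementary $p$-values $\{p_i : i\in A_g\}$ alone; under assumption D1 these within-group vectors are mutually independent across $g$, hence the coordinates of $\bm P$ are mutually independent. In subcase 1.1, the procedure is non-increasing and self-consistent with the identity shape function, so FDR control is the independence result of \cite{blanchard2008two} for self-consistent non-increasing procedures, now instantiated with $\bm P$ as the input. In subcase 1.2, I would invoke the adaptive analogue from \cite{blan2009}: since $\bm P$ is independent and valid and $\hat\pi_0$ satisfies Condition \ref{cond-adapt}, their adaptive self-consistency theorem applies with $\bm P$ in the role of the input $p$-values.

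For Case 2, the PRDS hypothesis is imposed directly on $\bm P$ on the subset $\mathcal{G}_0$, so the conclusion is an immediate application of the PRDS version of the self-consistency theorem of \cite{blanchard2008two} (the generalization of \cite{BY01} to self-consistent non-increasing procedures). For Case 3, I would use the reshaping argument: with the shape function $\beta_\nu$ of the form (\ref{beta}), the self-consistency theorem of \cite{blanchard2008two} guarantees $\text{FDR}_v\leq\alpha$ under arbitrary dependence among the input $p$-values; since only marginal validity of each $P_g$ with $g\in\mathcal{G}_0$ is used here, this applies to $\bm P$ with no restriction on the cross-group dependence and no monotonicity requirement on the procedure beyond self-consistency.

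The step I expect to be the main obstacle is the adaptive subcase 1.2. The delicate point is that the estimator condition (Condition \ref{cond-adapt}(b)) involves replacing a single coordinate $P_g$ by $0$ and controlling $\mathbb{E}[1/\hat\pi_0]$ under the resulting law, so one must check that this leave-one-out perturbation is compatible with the group structure, i.e. that setting $P_g=0$ does not disturb the independence of the remaining partial conjunction $p$-values. Because $P_g$ depends only on group $g$ and the groups are independent, this compatibility holds and the argument of \cite{blan2009} transfers; nonetheless this is where the verification demands the most care and where the reduction to the group level is least automatic.
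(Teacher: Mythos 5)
Your proposal is correct and follows essentially the same route as the paper: under item 1 the partial conjunction $p$-values, being functions of disjoint independent groups, are themselves independent, so the result follows from Propositions 2.7 and 3.3 of \cite{blanchard2008two} (case 1.1) and Theorem 11 of \cite{blan2009} (case 1.2), while items 2 and 3 follow directly from Propositions 3.6 and 3.7 of \cite{blanchard2008two}, each combined with their Proposition 2.7, with the partial conjunction $p$-values playing the role of the input $p$-values. Your extra verification in subcase 1.2 regarding the leave-one-out perturbation is sound but not needed as a separate step, since Condition \ref{cond-adapt} is imposed on the estimator as a function of the (independent) input $p$-values, so the adaptive theorem applies verbatim.
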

 \begin{remark}
 For item 1 along with 1.1, as well as for items 2 and 3, one has a tighter bound for  $\text{FDR}_v^{PC}.$ Specifically, one has $$\text{FDR}_v^{PC}\leq \frac{\alpha}{m}\sum_{g \in \mathcal{G}_0}v_gw_g\leq \alpha.$$
 \end{remark}
This proposition follows immediately from the results of \cite{blanchard2008two} and \cite{blan2009}. Since the partial conjunction $p$-values are combinations of within-group elementary $p$-values, under the assumptions of item 1, the partial conjunction $p$-values are independent. Therefore, the result of item 1 along with item 1.1 follows from Propositions 2.7 and 3.3 of \cite{blanchard2008two}. The result of item 1 along with item 1.2 follows from Theorem 11 of \cite{blan2009}.  In the case of item 2, satisfy the PRDS property, while in the case of item 3, the partial conjunction $p$-values may be arbitrarily dependent. 
Therefore, items 2 and 3 follow from Propositions 3.6 and 3.7 in \cite{blanchard2008two} respectively, in conjunction with Proposition 2.7  in that paper.

As discussed in Section \ref{prel}, many multiple testing procedures with thresholds of form (\ref{non-adapt}) with $\beta(x)=x$ are non-increasing and self-consistent, including the BH procedure and its generalizations incorporating weights, captured by the doubly-weighted BH  procedure in \cite{blanchard2008two}. In addition,  several adaptive variants of the BH procedure satisfy the conditions of item 1.2, including the adaptive BH procedure with Storey's estimator (\ref{Storey}). 
Therefore, under independence across groups, all these procedures guarantee FDR control on partial conjunction hypotheses, if valid partial conjunction $p$-values are given as input to the procedure. However, as discussed in the introduction, when the $p$-values belonging to different groups are dependent, it may be not clear whether the partial conjunction $p$-values satisfy the PRDS property on the subset of true partial conjunction nulls, as required in item 2. In these cases, one may revert to replacing the identity shape function $\beta(r)=r$ by the shape function of form (\ref{beta}),  which gives FDR control guarantees under arbitrary dependence among the partial conjunction $p$-values, according to item 3. However, this replacement results in lower thresholds leading to loss of power, because $\beta_{\nu}(r)\leq r.$ For example, the adjustment of the BH procedure for addressing arbitrary dependence, suggested by \cite{BY01}, replaces the identity shape function by  $\beta_{\nu}(r)=r/(\sum_{j=1}^m1/j).$ In Section \ref{MT} it is shown that for several dependency structures among the elementary $p$-values, this conservative adjustment is not needed if certain methods for constructing partial conjunction $p$-values are used. 

\subsubsection{Results for partial conjunction p-values connected to multiple testing procedures}\label{MT}
  
  According to Corollary \ref{connection-cor},  the minimum of all the adjusted $p$-values of a multiple testing procedure is a valid global null $p$-value, which is non-decreasing in each elementary $p$-value, provided that the multiple testing procedure guarantees FDR control in the weak sense and is monotone in the sense of Remark \ref{monotone}. 
  In Section \ref{PC}, we showed the method of \cite{BH08} for obtaining a partial conjunction $p$-value based on a certain non-decreasing combining function for testing the global null. Taking these two results together, we may obtain partial conjunction $p$-values connected to multiple testing procedures. Examples are partial conjunction $p$-values based Simes', Hommel's, Bonferroni methods,  which are connected (in the above sense) to BH, BY, and Bonferroni procedures, respectively. Similarly, the partial conjunction $p$-value in (\ref{PC-Storey}) is connected to the adaptive BH method with Storey's estimator (\ref{Storey}) for the proportion of nulls. 
  
  Let us consider the case where for each $g\in\{1, \ldots, G\},$ the partial conjunction $p$-value $P^{u_g/n_g}$  is connected to a certain multiple testing procedure $\mathcal{M}_g$ which satisfies the natural monotonicity property given in Remark \ref{monotone}. Formally,
 \begin{align}P_g^{u_g/n_g}=\max\{P^{1/(n_g-u_g+1)}[A]: A\subseteq A_g, |A|=n_g-u_g+1\},\label{PCM}\end{align} where $P^{1/(n_g-u_g+1)}[A]$ 
 is the minimum adjusted $p$-value according to  $\mathcal{M}_g$  applied on the $p$-values with indices in $A.$ 
 The multiple testing procedures $\mathcal{M}_g$ may be different for different groups $g$, for example for certain groups one may use the partial conjunction $p$-value based on Simes' method, while for the other groups one may revert to Bonferroni's method. The next theorem addresses such partial conjunction $p$-values.

  \begin{thm}\label{min-adj-non-adapt}
 Let $\mathcal{M}$ be a self-consistent multiple testing procedure with respect to thresholds of form (\ref{non-adapt}) with identity shape function $\beta(x)=x.$	Consider a family of partial conjunction hypotheses $\{H_0^{u_g/n_g},\,\, g=1, \ldots, G\},$ associated with prior weights $\{w_g, \,\,g=1, \ldots, G\}$ and penalty weights $\{v_g, \,\,g=1, \ldots, G\},$ satisfying $\sum_{g=1}^Gw_gv_g=G.$ Let $\mathcal{M}_g, \, g=1, \ldots, G$ be multiple testing procedures satisfying the monotonicity property given in Remark \ref{monotone}. Assume that for each $g\in\{1, \ldots, G\},$ the partial conjunction $p$-value $P^{u_g/n_g}$ 
 is connected to  $\mathcal{M}_g$ in the sense of (\ref{PCM}). 
  In each of the following three cases, requiring additional assumptions on the  procedure $\mathcal{M},$ 
  the procedures $\mathcal{M}_g, \, g=1, \ldots, G,$ 
  and the dependency among the elementary $p$-values, 
  the  procedure $\mathcal{M}$ applied on $\{P_g^{u_g/n_g}, \,\,g=1, \ldots, G\}$ at level $\alpha$ guarantees
  $$\text{FDR}_v^{PC}\leq \frac{\alpha}{G}\sum_{g \in \mathcal{G}_0}v_gw_g\leq\alpha.$$ 
  \begin{enumerate} \item 
  	\begin{itemize} 
  	\item[(a)] The $p$-values $(p_1, \ldots, p_M)$ satisfy the overall positive dependence property, as defined in item D3 in Section \ref{setting}.
  	\item[(b)]The multiple testing procedure $\mathcal{M}$ is non-increasing.
  	\item[(c)] For each $g\in\{1, \ldots, G\},$ the multiple testing procedure $\mathcal{M}_g$ is self-consistent with respect to thresholds of form (\ref{non-adapt}) with prior and penalty weights equal to unity and identity shape function.
  			\end{itemize}
  	 	 \item \begin{itemize}
  	 	\item[(a)] The $p$-values $(p_1, \ldots, p_M)$ satisfy  conditional positive dependence across groups, and independence within each group, as defined in items D4 and D2 in Section \ref{setting}.
  	 	\item[(b)] The multiple testing procedure $\mathcal{M}$ is stable and concordant.
  	 	\item[(c)] For each $g\in\{1, \ldots, G\},$ the multiple testing procedure $\mathcal{M}_g$ is self-consistent with respect to thresholds of form (\ref{adapt1}), 
  	 	where $\hat{\pi}_0$ satisfies  Condition \ref{cond-adapt}. 
  	 \end{itemize} 
     	\item \begin{itemize}
   		\item[(a)] The $p$-values $(p_1, \ldots, p_M)$ satisfy positive dependence across groups, as defined in item D5 of Section \ref{setting}, and the dependence among the $p$-values within each group is unspecified.
   		\item[(b)]The multiple testing procedure $\mathcal{M}$ is stable and concordant.
   		\item[(c)] The partial conjunction $p$-values are connected to the Bonferroni procedure, i.e. for each $g\in\{1, \ldots, G\}$, $P_g^{u_g/n_g}=(n_g-u_g+1)p_{g(u_g)},$ where $p_{g(1)}\leq \ldots\leq p_{g(n_g)}$ is the sequence of ordered $p$-values for the hypotheses in group $g.$ 
   	\end{itemize}
  \end{enumerate}   	
  \end{thm}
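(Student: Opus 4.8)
The plan is to prove, for every true-null group $g\in\mathcal{G}_0$, the single superuniformity bound
\[ \EE{\frac{v_g\,I(g\in\mathcal{R})}{|\mathcal{R}|_v}}\le\frac{\alpha v_g w_g}{G}, \]
and then to sum over $g\in\mathcal{G}_0$; since $\text{FDR}_v^{PC}=\sum_{g\in\mathcal{G}_0}\EE{v_g I(g\in\mathcal{R})/|\mathcal{R}|_v}$ (with the convention $0/0=0$ of Remark~\ref{technical}), this gives $\text{FDR}_v^{PC}\le\frac{\alpha}{G}\sum_{g\in\mathcal{G}_0}v_g w_g$, and the final $\le\alpha$ follows from $\sum_{g\in\mathcal{G}_0}v_g w_g\le\sum_{g=1}^G v_g w_g=G$. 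In all three cases the first step is identical: self-consistency of the outer procedure $\mathcal{M}$ with identity shape gives, on $\{g\in\mathcal{R}\}$, the threshold inequality $P_g^{u_g/n_g}\le\alpha w_g|\mathcal{R}|_v/G$, so the rejection indicator may be replaced by the event that the partial conjunction $p$-value lies below this data-dependent threshold. The three cases differ only in how the resulting expectation is bounded, which is where the dependence assumptions and the connection of $P_g^{u_g/n_g}$ to $\mathcal{M}_g$ enter.

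For item 1 the only available monotonicity of $\mathcal{M}$ is the non-increasing property, which does not permit decoupling group $g$ from the denominator; I would therefore keep the full count $|\mathcal{R}|_v$ and exploit the overall positive dependence (D3) directly. Because $|\mathcal{R}|_v$ is non-increasing in each $P_h$ and each $P_h$ is non-decreasing in the elementary $p$-values of its group by Remark~\ref{monotone}, the denominator is non-increasing in every elementary $p$-value, so a Benjamini--Yekutieli/Blanchard--Roquain telescoping argument applies to the PRDS vector $(p_1,\dots,p_M)$. The extra ingredient is the reduction of $\{P_g^{u_g/n_g}\le\cdot\}$ to the elementary level: passing to a fully-true-null subset $A^*\subseteq A_g$ of size $n_g-u_g+1$ (which exists since $k_g<u_g$, and satisfies $P_g^{u_g/n_g}\ge P^{1/(n_g-u_g+1)}[A^*]$) and using the identity-shape self-consistency of $\mathcal{M}_g$ to express that $\mathcal{M}_g$ rejects within $A^*$ in terms of elementary threshold crossings, which is precisely the content of a group-level superuniformity lemma generalizing that of \cite{RetJ17}.

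For items 2 and 3 the positive dependence holds only across groups, so I would first decouple group $g$ from the denominator using the stability of $\mathcal{M}$: on $\{g\in\mathcal{R}\}$ the rejection set coincides with the leave-one-out set $\mathcal{R}^{-g}$, so that $|\mathcal{R}|_v=|\mathcal{R}^{-g}|_v=:D_g$, a function of $\bm p^{-g}$ alone that, by concordance together with Remark~\ref{monotone}, is coordinatewise non-increasing in $\bm p^{-g}$. This yields the pointwise bound $v_g I(g\in\mathcal{R})/|\mathcal{R}|_v\le v_g\,I(P_g^{u_g/n_g}\le\alpha w_g D_g/G)/D_g$, whose right-hand side no longer couples the within-group $p$-values of group $g$ to the denominator. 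In item 3, using the Bonferroni form $P_g^{u_g/n_g}=(n_g-u_g+1)p_{g(u_g)}$ and writing $T_g\subseteq A_g$ for the set of true-null indices (so $|T_g|=n_g-k_g$), the hypothesis $k_g<u_g$ forces the event $\{P_g^{u_g/n_g}\le t\}$ to place at least $u_g-k_g\ge1$ members of $T_g$ below $t/(n_g-u_g+1)$, whence
\[ I(P_g^{u_g/n_g}\le t)\le\frac{1}{u_g-k_g}\sum_{i\in T_g}I(p_i\le t/(n_g-u_g+1)). \]
Taking $t=\alpha w_g D_g/G$ and applying to each $i\in T_g$ a single-hypothesis PRDS superuniformity bound --- licensed by $(\bm p^{-g},p_i)$ being PRDS in $p_i$ (D5) and $D_g$ being non-increasing in $\bm p^{-g}$ --- leaves the prefactor $\frac{n_g-k_g}{(u_g-k_g)(n_g-u_g+1)}$, which is at most one by $(a-1)(b-1)\ge0$ with $a=u_g-k_g$ and $b=n_g-u_g+1$, giving exactly the per-group target $\alpha v_g w_g/G$. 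In item 2 I would instead reduce $\{P_g^{u_g/n_g}\le\cdot\}$ through the adaptive self-consistency of $\mathcal{M}_g$ to a single true-null $p_i$ in a fully-true-null subset, then condition on the within-group vector $\bm p_g^{-i}$: within-group independence (D2) together with Condition~\ref{cond-adapt}(b) controls the adaptive factor $\hat{\pi}_0^{-1}$ (after replacing $p_i$ by $0$), while the conditional across-group PRDS (D4) controls the random denominator $D_g$, mirroring at the group level the adaptive argument of \cite{blan2009}.

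The central difficulty, common to all three cases, is that the partial conjunction $p$-values need not be PRDS even when the elementary $p$-values are, so no off-the-shelf FDR result can be applied to the $P_g^{u_g/n_g}$ directly; every bound must be obtained by descending to the elementary $p$-values and combining the inner procedure's self-consistency with the relevant positive-dependence or independence property without incurring a Bonferroni-type loss --- the role played by the counting inequality $(a-1)(b-1)\ge0$ in item 3 and by the group-level superuniformity lemmas elsewhere. I expect item 2 to be the hardest, since there one must manage three coupled mechanisms at once --- the leave-one-out decoupling (stability and concordance), the adaptive null-proportion factor (within-group independence and Condition~\ref{cond-adapt}(b)), and the random denominator (conditional across-group PRDS, D4) --- and verify that conditioning on $\bm p_g^{-i}$ is compatible with all three simultaneously.
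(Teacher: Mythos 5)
Your proposal is correct and follows essentially the same route as the paper: the decomposition of $\text{FDR}_v^{PC}$ over $\mathcal{G}_0$, the use of self-consistency of $\mathcal{M}$ to replace $\ind(g\in\mathcal{R})$ by a threshold-crossing event, the full-vector pair $(P_g^{u_g/n_g},|\mathcal{R}|_v)$ under D3 in item 1 versus the stability/concordance leave-one-out pair $(P_g^{u_g/n_g},|\mathcal{R}^{-g}|_v)$ in items 2--3, and the verification of the dependency control condition by descending to elementary $p$-values through the inner procedures' self-consistency (including the $p_i\mapsto 0$ replacement and Condition \ref{cond-adapt}(b) for the adaptive case) is precisely the content of the paper's Proposition \ref{SM-suffic}, Lemma \ref{ov-PRDS1}, and Lemma \ref{help1-u}. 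The only local deviation is item 3, where you bound $\ind(P_g^{u_g/n_g}\le t)$ by directly counting true nulls below $t/(n_g-u_g+1)$ among all of $T_g$, incurring the factor $\frac{n_g-k_g}{(u_g-k_g)(n_g-u_g+1)}\le 1$ via $(a-1)(b-1)\ge 0$, whereas the paper first reduces to a single fully-null subset of size $n_g-u_g+1$ (Lemma \ref{help1-u}) and then union-bounds within it so that the prefactor is exactly $1$; both arguments are valid and yield the same conclusion.
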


The results of Theorem \ref{min-adj-non-adapt} rely on the sufficient conditions for FDR control obtained by \cite{blanchard2008two}, as well as an additional pair of such sufficient conditions, which we obtain using the techniques of \cite{blanchard2008two} (see Appendix \ref{appendix:suff}). Each pair of conditions addresses the multiple testing procedure and the dependency structure. The  condition addressing the dependency structure follows from Lemma \ref{ov-PRDS1}, which generalizes the  group-level superuniformity lemma of \cite{RetJ17}. 
The cases of independence across groups and arbitrary dependence across groups are not considered in this theorem, since they are covered by Proposition \ref{main-FDR}.  

The doubly-weighted BH procedure considered by (\cite{blanchard2008two}) is self-consistent with respect to thresholds of form (\ref{non-adapt}), is non-increasing, stable, and concordant. Therefore, for this procedure we obtain the following corollary:
\begin{corollary}\label{cor}
    The doubly-weighted BH procedure at level $\alpha$ applied on the partial conjunction $p$-values $\{P_g^{u_g/n_g},\, g=1, \ldots, G\}$ guarantees $\text{FDR}_v^{PC}\leq \alpha$ in each of the following three cases:
    \begin{itemize}
        \item[1]The $p$-values are positively dependent in the sense of item D3 in Section \ref{setting}, and the partial conjunction $p$-values are based on the combining method of Simes (\ref{Simes}), or on more conservative combining methods, such as Bonferroni (\ref{Bonf}) or Hommel (\ref{Hommel}).
  		\item[2] The $p$-values are independent within each group and are conditionally positively dependent across groups, in the sense of items D2 and D4 in Section \ref{setting}, and the partial conjunction $p$-values are computed using Simes-Storey function given in (\ref{PC-Storey}), i.e. they are connected in the sense of (\ref{PCM}) to the adaptive BH procedure with Storey's estimator for the proportion of nulls.
  		\item[3] The p-values are positively dependent across the groups, in the sense of item D5 in Section \ref{setting}, are arbitrarily dependent within the groups, and the partial conjunction $p$-values are based on Bonferroni's method.
    \end{itemize}
\end{corollary}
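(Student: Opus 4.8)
The plan is to obtain all three cases as direct specializations of Theorem \ref{min-adj-non-adapt}, taking the outer procedure $\mathcal{M}$ to be the doubly-weighted BH procedure and matching the prescribed combining method to suitable inner procedures $\mathcal{M}_g$. The first step is to record that the doubly-weighted BH procedure of \cite{blanchard2008two} possesses every property demanded of $\mathcal{M}$ across the three items of the theorem: it is self-consistent with respect to thresholds of form (\ref{non-adapt}) with identity shape function, and it is non-increasing, stable, and concordant. This is precisely what is verified in Appendix \ref{proofs:corBH}, so it may be invoked directly. With the weights normalized as in the theorem, namely $\sum_{g=1}^G w_g v_g = G$, Theorem \ref{min-adj-non-adapt} applies, and its conclusion $\text{FDR}_v^{PC} \le \frac{\alpha}{G}\sum_{g\in\mathcal{G}_0} v_g w_g \le \alpha$ yields the stated bound. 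It therefore remains only to supply, in each case, inner procedures $\mathcal{M}_g$ meeting the case-specific hypothesis (c) together with the monotonicity of Remark \ref{monotone}.

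For Case 1, I would invoke Corollary \ref{connection-cor}, which identifies Simes', Bonferroni's, and Hommel's global-null combining functions as the minimum adjusted $p$-values of the BH, Bonferroni, and BY procedures, respectively; through the construction (\ref{PCM}) the associated partial conjunction $p$-values are then connected to these three procedures applied to the largest $n_g - u_g + 1$ within-group $p$-values. Each of BH, Bonferroni, and BY satisfies the monotonicity property of Remark \ref{monotone}, and each is self-consistent with respect to thresholds (\ref{non-adapt}) with unit weights and identity shape: for BH this is immediate, while Bonferroni and BY always reject a subset of the hypotheses rejected by BH and hence, by the observation recorded in Section \ref{prel}, inherit this self-consistency. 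Item 1 of Theorem \ref{min-adj-non-adapt}, under the overall positive dependence D3, then gives Case 1.

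For Case 2, the combining function (\ref{PC-Storey}) is by construction the minimum adjusted $p$-value, in the sense of (\ref{PCM}), of the adaptive BH procedure with Storey's estimator (\ref{Storey}) applied to the largest $n_g - u_g + 1$ $p$-values. This procedure is monotone in the sense of Remark \ref{monotone}: since $\hat{\pi}_0$ is coordinatewise non-decreasing, lowering a $p$-value can only enlarge the adaptive thresholds and hence the rejection set. It is self-consistent with respect to the adaptive thresholds (\ref{adapt1}) with $\hat{\pi}_0$ obeying Condition \ref{cond-adapt}, as noted in Section \ref{prel}, so item 2 of the theorem applies under D2 and D4. For Case 3, applying Bonferroni's combining function (\ref{Bonf}) to the largest $n_g - u_g + 1$ within-group $p$-values multiplies their smallest element, namely $p_{g(u_g)}$, by $n_g - u_g + 1$, so that $P_g^{u_g/n_g} = (n_g - u_g + 1)p_{g(u_g)}$; this is exactly hypothesis (c) of item 3, which then delivers the bound under D5 with arbitrary within-group dependence.

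The conceptual content is thus essentially inherited from Theorem \ref{min-adj-non-adapt} and Appendix \ref{proofs:corBH}, leaving only elementary verifications. I expect the mildest friction to be the self-consistency (against the identity-shape collection) of the Bonferroni and BY inner procedures in Case 1: here one must observe that a procedure rejecting a subset of the BH rejections automatically satisfies the self-consistency inclusion against the larger identity-shape thresholds, since a smaller rejection set is compared against a threshold level $|\mathcal{R}|_v$ that only makes the inclusion easier. The companion check, the Remark \ref{monotone}-monotonicity of the adaptive Simes--Storey procedure in Case 2, is equally routine once coordinatewise monotonicity of $\hat{\pi}_0$ is used.
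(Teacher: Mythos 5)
Your proposal is correct and follows essentially the same route as the paper's proof: take $\mathcal{M}$ to be the doubly-weighted BH procedure, match each prescribed combining method to an inner procedure $\mathcal{M}_g$ (BH, Bonferroni, and BY for Case 1; adaptive BH with Storey's estimator for Case 2; Bonferroni for Case 3), verify hypothesis (c) and the Remark \ref{monotone} monotonicity in each case, and invoke the corresponding item of Theorem \ref{min-adj-non-adapt}. Your subset-of-BH argument for the self-consistency of the Bonferroni and BY inner procedures, and the identification $P_g^{u_g/n_g}=(n_g-u_g+1)p_{g(u_g)}$ in Case 3, are exactly the paper's steps.

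The one caveat is that your treatment of the outer procedure is circular as written: you justify the self-consistency, non-increasingness, stability, and concordance of the doubly-weighted BH procedure by citing Appendix \ref{proofs:corBH}, but that appendix \emph{is} the paper's proof of this corollary, and roughly half of it is devoted to establishing precisely those properties from scratch — showing that any step-up procedure with thresholds of form (\ref{non-adapt}) satisfies the self-consistency inclusion (\ref{SC}) with equality, and constructing the quantity $\hat{r}^{(i)}=\max\{r: |L^{(i)}_{\Delta}(r)|_{\bm v}\geq r-v_i\}$ to prove that $\hat{r}=\hat{r}^{(i)}$ whenever $i\in\mathcal{R}$, which yields stability, and then concordance. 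Section \ref{prel} only asserts these properties with a forward reference to this same appendix, so they are proven nowhere else in the paper. A self-contained version of your proof must therefore include this verification (or an equivalent one); with it added, your argument coincides with the paper's.
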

The result of item 1 with respect to global null hypotheses (i.e. the case where $u_g=1$ for $g=1, \ldots, G$), which are tested using Simes' $p$-values (\ref{Simes}) follows from the results of \cite{RetJ17}. Here it is generalized, addressing in addition partial conjunction hypotheses with $u_g>1$ and additional combining methods. It is easy to see that the dependency structure in item 2 satisfies the overall positive dependence requirement of item 1, 
so under the condition of item 2, the doubly-weighted BH procedure is valid when applied on Simes' partial conjunction $p$-values. However, the more strict  requirement of item 2 allows to replace these $p$-values by Simes-Storey partial conjunction $p$-values, which are more liberal for the groups which are enriched with signal. The result of item 2 is useful for the meta-analysis setting, see Section \ref{meta-analysis}. Finally, item 3 shows that when relaxing the independence within groups requirement in item 2, and requiring unconditional positive dependence across groups, the error control is still guaranteed when the doubly-weighted BH procedure is applied on the Bonferroni partial conjunction $p$-values. Theorem \ref{min-adj-non-adapt} shows that when the doubly-weighted BH procedure is replaced by a more conservative self-consistent procedure, possibly due to the structural constraints on the set of rejected partial conjunction hypotheses (see Section \ref{prel} for examples), we have the same  results as in Corollary \ref{cor} under very lenient assumptions on the procedure. Similarly, one may use more conservative partial conjunction tests, based on multiple testing procedures $\mathcal{M}_g$ which address certain structural constraints within groups.

 It may be of interest to consider the results of Theorem \ref{min-adj-non-adapt} for two extreme cases: $G=M$ and $G=1$. In the first case, each group consists of one hypothesis, so $n_g=u_g=1,$ and $\text{FDR}^{PC}_v$ reduces to the weighted FDR for the elementary hypotheses. The partial conjunction $p$-value  based on Simes'  method satisfies the assumption of part (c) in item 1, and in this extreme case,  for each group $g$ this partial conjunction $p$-value reduces to the elementary $p$-value of the single hypothesis in group $g$. 
 Both types of positive dependence across groups reduce to overall positive dependence in this case, therefore the result of Theorem \ref{min-adj-non-adapt} under the assumptions of item 1 follows from combining Proposition 3.6 and Proposition 2.7 of \cite{blanchard2008two}. 
 In  the second case, where $G=1,$ the entire set of $p$-values constitutes one group, with prior and penalty weight equal to unity. Consider Theorem \ref{min-adj-non-adapt} with  the BH procedure taking the role of $\mathcal{M}.$ In this extreme case, the BH procedure reduces to rejecting the single partial conjunction null when the given partial conjunction $p$-value is upper bounded by $\alpha.$ Therefore, $\text{FDR}_v^{PC}$ is simply the significance level of the partial conjunction test. Thus, Theorem \ref{min-adj-non-adapt} shows that in each of the cases 1, 2, and 3, a partial conjunction $p$-value satisfying assumption (c) is valid under the dependence assumption (a) within the single group. 
 Particularly, the partial conjunction $p$-values based on Simes', Simes-Storey, and Bonferroni methods, are valid under PRDS dependence, 
 independence, and arbitrary dependence,  respectively. These  are also known results, see Section \ref{sec-agg}. 
 
 Finally, we would like to note that Corollary \ref{cor} may be used for obtaining several results on Simes' $p$-value, which is the minimum adjusted $p$-value of the BH procedure. Obviously, each of the dependency structures considered in the corollary holds when we restrict the problem to any subset of groups, i.e. when we consider testing $\{H_0^{u_g/n_g},\,\, g\in \mathcal{G}_s\},$ where $\mathcal{G}_s\subseteq \{1, \ldots, G\}.$ Therefore, the BH procedure controls the FDR in each of the cases considered not only for the entire family of partial conjunction hypotheses, but also for any subset of those hypotheses. 
 Using this fact along with Corollary \ref{connection-cor}, we obtain the following result. 
 \begin{corollary}\label{cor-sim}
Let $\mathcal{G}_s\subseteq\{1, \ldots, G\}$ be a subset of group indices,  and consider applying Simes' combining function (\ref{Simes}) on the partial conjunction $p$-values in the set $\{P_g^{u_g/n_g}, \,\,g\in \mathcal{G}_s\}.$ The combined $p$-value is valid for testing the intersection of partial conjunction hypotheses $\cap_{g\in \mathcal{G}_s} H_0^{u_g/n_g}$ in each of the three cases considered in Corollary \ref{cor}. 
\end{corollary}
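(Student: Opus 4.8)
The plan is to recognize Simes' combining function as the minimum adjusted $p$-value of the BH procedure and then apply Corollary~\ref{connection-cor}, with the required weak FDR control supplied by Corollary~\ref{cor} restricted to the subfamily indexed by $\mathcal{G}_s$. The starting observation is the one recalled in Section~\ref{sec-GN}: applying Simes' function (\ref{Simes}) to the vector $\{P_g^{u_g/n_g}:g\in\mathcal{G}_s\}$ returns precisely the minimum adjusted $p$-value of the BH procedure run on these partial conjunction $p$-values. Moreover, $\cap_{g\in\mathcal{G}_s}H_0^{u_g/n_g}$ is exactly the global null of the family $\{H_0^{u_g/n_g}:g\in\mathcal{G}_s\}$, so the asserted validity is the statement that this minimum adjusted $p$-value is a valid global null $p$-value for that family.

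First I would fix $x\in(0,1)$ and work under the intersection null, so that $\mathcal{G}_s\subseteq\mathcal{G}_0$ and every rejection made by the BH procedure on $\{P_g^{u_g/n_g}:g\in\mathcal{G}_s\}$ is a false discovery. Exactly as in the computation preceding Corollary~\ref{connection-cor}, this yields $\PP{f_{\text{Simes}}(\{P_g^{u_g/n_g}:g\in\mathcal{G}_s\})\le x}=\PP{V>0}=\fdr$, where $V$ counts the rejections and the $\fdr$ is that of the BH procedure applied to the subfamily at level $x$. It therefore suffices to show that this FDR is at most $x$, i.e. that BH controls the FDR in the weak sense on the subfamily under each of the three dependence structures.

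Next I would invoke Corollary~\ref{cor} for the groups in $\mathcal{G}_s$, taking the unweighted BH procedure, that is the special case $v_g=w_g=1$ of the doubly-weighted BH procedure (for which $\sum_{g\in\mathcal{G}_s}v_gw_g=|\mathcal{G}_s|$). This delivers $\fdr\le x$ on the subfamily, provided the relevant dependence property of Corollary~\ref{cor} is inherited by the subvector of elementary $p$-values indexed by $\bigcup_{g\in\mathcal{G}_s}A_g$. Verifying this inheritance is the one step that needs care, and I expect it to be the main, if minor, obstacle: for the PRDS-type conditions D3, D4, and D5 one must check that marginalizing out the coordinates of the groups outside $\mathcal{G}_s$ preserves positive regression dependence, which follows because for any non-decreasing set $D'$ in the lower-dimensional space the event $\{\bm{p}_{\mathcal{G}_s}\in D'\}$ corresponds to a non-decreasing set in the full space, leaving the required monotonicity of the conditional probabilities intact; independence within groups (D2) is trivially preserved. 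Combining the two displays gives $\PP{f_{\text{Simes}}\le x}\le x$ for every $x\in(0,1)$, which is exactly the claimed validity, uniformly across the three cases.
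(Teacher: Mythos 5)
Your proposal is correct and takes essentially the same route as the paper: restrict the dependency structures and Corollary \ref{cor} (with unit weights) to the subfamily indexed by $\mathcal{G}_s$, observe that under the intersection null every BH rejection is a false discovery so that weak FDR control gives $\PP{f_{\text{Simes}}\leq x}\leq x$, and conclude via Corollary \ref{connection-cor}. The one detail you spell out that the paper treats as obvious is the inheritance of D3--D5 by the subvector of elementary $p$-values, which your cylinder-set (marginalization) argument verifies correctly.
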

Obviously, the same result holds for the weighted variant of Simes' $p$-value, which is the minimum adjusted $p$-value of the doubly-weighted BH procedure. The fact that  Simes' $p$-value is valid when it receives as input Simes' global null $p$-values and all the elementary $p$-values are PRDS has been proven by \cite{RetJ17} (see item (d) of their group-level superuniformity lemma). Corollary 
\ref{cor-sim} generalizes this result, addressing general Simes' partial conjunction $p$-values, as well as other dependency structures and combining methods.  Consider a procedure based on the closed testing principle of \cite{marcus1976closed}, which
considers the family $\{H_0^{u_g/n_g}, g=1, \ldots, G\}$ as the family of elementary hypotheses, and tests the intersections of these hypotheses using Simes' test applied on the corresponding partial conjunction $p$-values. Based on 
Corollary \ref{cor-sim}, we obtain that in each of the three cases of Corollary \ref{cor}, all these Simes' tests are valid, therefore this procedure guarantees FWER control on the partial conjunction hypotheses, as well as on the entire set of all their intersections.
In fact, it reduces to Hommel's procedure (\cite{hommel1988stagewise}) applied on partial conjunction $p$-values. We conclude that Hommel's procedure, as well as its shortcuts (see, e.g., \cite{hochberg1988sharper}, \cite{meijer2019hommel}, \cite{goeman2011multiple}, \cite{goeman2019simultaneous}) are valid for FWER control on the partial conjunction hypotheses in cases 1--3 of Corollary \ref{cor}. Moreover, Goeman and Solari (\cite{goeman2011multiple}) showed that closed testing with valid local tests can be used for controlling the false discovery proportions (FDP) over all subsets, and suggested using this approach for more flexible multiple testing. In our setting, this approach gives the flexibility to select a subset of groups  post-hoc, and to estimate the number of false partial conjunction hypotheses for those selected groups. We discuss this goal and the implications of Corollary \ref{cor-sim} in more detail in Section \ref{discussion}. 

\section{The meta-analysis setting}\label{meta-analysis}
 \subsection{Preliminaries}\label{setting-metaanalysis}
 We consider the setting where the set of $m$ features are tested in $n\geq 2$ independent studies. The corresponding hypotheses can be arranged in a matrix with $m$ rows and $n$ columns, where the $(i,j)$th entry of the matrix is $H_{ij},$  the null hypothesis for feature $i$ in study $j.$ Consider the corresponding $m\times n$ matrix of $p$-values for these hypotheses, $\mathbf{p},$ where $p_{ij}$ is the $p$-value for $H_{ij},$ $i=1, \ldots, m;$ $j=1, \ldots, n.$ The hypotheses belonging to the same row are those corresponding to the same feature in different studies.
 
 For feature $i\in\{1, \ldots, m\},$ let $k(i)$ be the number of studies where feature $i$ has an effect, i.e. the number of false null hypotheses in the set $\{H_{ij}, j=1, \ldots, n\}.$ For $u\in\{1, \ldots,n\},$ consider the family of partial conjunction nulls $\{H_{i}^{u/n}, i=1, \ldots, m\},$ where $H_i^{u/n}$ states that feature $i$ has an effect in less than $u$ studies, i.e. $H_i^{u/n}: k(i)<u.$ For $u=1,$ identifying false null hypotheses in this family corresponds to identifying features with an effect in at least one study, which corresponds to the typical goal of meta-analysis. Testing this family for $u\geq 2$ corresponds to identifying features with an effect in at least $u$ studies, which is considered as the goal of replicability analysis (see, e.g., \cite{BHY09}, \cite{heller2014deciding}, \cite{bogomolov2018assessing}, \cite{wang2016detecting}, \cite{hoang2021combining}).  
 
 Denote by $\bm{p}_{\cdot j}$ the vector of $m$ $p$-values in column $j,$ and by $\mathbf{p}_{\cdot(-j)}$ the $m\times (n-1)$ matrix which is obtained from $\mathbf{p}$ by excluding column $j.$ Similarly, $\bm{p}_{i\cdot}$ and $\mathbf{p}_{(-i)\cdot}$ denote the vector of $n$ $p$-values in row $i,$ and the $(m-1)\times n $ matrix which is obtained from $\mathbf{p}$ by excluding row $i,$ respectively. We denote by $\bm p,$ $\bm{p}_{\cdot(-j)},$ and $\bm p_{(-i)\cdot}$ the vectors obtained by stacking the rows of $\mathbf{p},$ $\mathbf{p}_{\cdot(-j)},$ and $\mathbf p_{(-i)\cdot},$ respectively.
 
\subsection{Multiple testing of partial conjunction hypotheses in the meta-analysis setting}\label{meta-mult}
Similarly to Section \ref{mult-PC}, 
we address testing the family $\{H_{i}^{u/n}, i=1, \ldots, m\}$ with (weighted) FDR control. 
A typical meta-analysis addresses independent studies, so we assume independence within each row: for each $j\in\{1, \ldots, n\},$ the vectors $\bm{p}_{\cdot j}$ and  $\bm{p}_{\cdot (-j)}$ are independent.  Thus, for obtaining valid partial conjunction $p$-values, one may use any combining method which is valid under independence, such as Stouffer's, Fisher's, or Simes-Storey (\ref{PC-Storey}) methods. The dependency within the columns, in combination with the chosen partial conjunction $p$-values, guides the choice of
the multiple testing procedure, which should guarantee the desired error rate control for the induced dependencies among those combined $p$-values. For the results below, 
the following dependencies are considered within each study $j\in\{1, \ldots, n\},$ in conjunction with the assumption of independence across studies: 
\begin{itemize}
	\item[M1] Positive dependence: the vector of $p$-values for the hypotheses in study $j$, $\bm{p}_{\cdot j},$  satisfies the PRDS property on the subset of true null hypotheses.
	\item[M2] Independence: the $p$-values within $\bm{p}_{\cdot j}$ are  independent.
	\item[M3] Arbitrary dependence: the dependence among $\{p_{ij}, i=1, \ldots, m\}$ is unspecified. 
\end{itemize}
To obtain an analogy to the dependency structures in Section \ref{mult-PC}, it is useful to note that in the current matrix setting, the hypotheses may be viewed as  divided into $m$ groups, where the hypotheses within each row of the matrix form a group. As in Section \ref{mult-PC}, a partial conjunction hypothesis is considered for
each group (corresponding to a specific feature), and the goal is (weighted) FDR control for the tested family of $m$ partial conjunction hypotheses. The independence across the studies translates to independence of $p$-values within each group. Assuming in addition independence within each study (as defined in item M2), we obtain independence across the groups, as defined in item D1 in Section \ref{mult-PC}. 
Similarly,  arbitrary dependence within each study translates to arbitrary dependence across groups. Therefore, the theoretical results for the dependencies in items M2 and M3 are covered by items 1 and 3 of Proposition \ref{main-FDR}, respectively. Let us consider the dependency structure in item M1, where the $p$-values are positively dependent within each study, and are independent across the studies. We show in the appendix that this dependency structure satisfies  conditional positive dependence across groups and independence within groups, defined in items D4 and D2 in Section \ref{setting}, respectively, as well as overall positive dependence, defined in item D3. Therefore, the results of Theorem \ref{min-adj-non-adapt} hold for the dependency structure in M1. The following theorem states this result, and an additional result addressing Fisher's and Stouffer's methods for obtaining partial conjunction $p$-values.

\begin{thm}\label{thm-meta}
	Consider the meta-analysis setting, where the interest lies in testing the family $\{H_i^{u/n}, \,i=1, \ldots, m\}$ for a specific $u\in\{1, \ldots, n\}.$ Assume that this family is associated with prior weights $\{w_i, \,i=1, \ldots, m\},$ and penalty weights $\{v_i, \,i=1, \ldots, m\},$ satisfying $\sum_{i=1}^m w_iv_i=m.$ Let $\mathcal{M}$ be a non-increasing  multiple testing procedure, which is self-consistent with respect to thresholds of form (\ref{non-adapt}) with identity shape function $\beta(x)=x.$
	Assume independence of $p$-values across the studies, meaning that for each $j\in\{1, \ldots,n\},$ the vectors $\bm{p}_{\cdot j}$ and   $\bm{p}_{\cdot (-j)}$ are independent.
	In addition, assume positive dependence within each study, in the sense of item M1. 
	In each of the following cases, procedure $\mathcal{M}$ applied on $\{P_i^{u/n},\, i=1, \ldots, m\}$  at level $\alpha$ guarantees
  $$\text{FDR}_v^{PC}\leq \frac{\alpha}{m}\sum_{i \in M_0}v_iw_i\leq\alpha,$$
	where $M_0=\{i\in\{1, \ldots, m\}: H_i^{u/n} \text{ is true}\},$  the index set of features for which the partial conjunction null is true.
	\begin{enumerate}
	\item The partial conjunction $p$-values are connected to a multiple testing procedure $\mathcal{M}_g,$ in the sense of (\ref{PCM}), where $\mathcal{M}_g$ and $\mathcal{M}$ either satisfy the assumptions of items 1(b) and 1(c) of Theorem \ref{min-adj-non-adapt}, respectively, or satisfy the assumptions of 2(b) and 2(c) of Theorem \ref{min-adj-non-adapt}, respectively.  
	\item  The partial conjunction $p$-values are based on Fisher's or Stouffer's methods, and the elementary $p$-values for true null hypotheses are $U(0,1)$ random variables.
	\end{enumerate}
\end{thm}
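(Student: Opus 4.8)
The plan is to handle the two cases by different routes: Case~1 reduces to Theorem~\ref{min-adj-non-adapt} once the matrix dependency is translated into the group-level conditions of Section~\ref{setting}, whereas Case~2 must be argued directly from the group-level superuniformity lemma, since Fisher's and Stouffer's combinations are not the minimum adjusted $p$-value of any self-consistent procedure and hence are not connected to such a procedure in the sense of~(\ref{PCM}). I would first record what the matrix structure forces at the group level, viewing each row as a group. Because the studies are independent, the entries of any row come from distinct independent studies and are mutually independent, which is within-group independence (D2). For overall positive dependence (D3) I fix a true null $(i,j)$ and a non-decreasing set $D$: conditioning on $p_{ij}=x$ leaves the columns other than $j$ untouched, so $\PP{\bm p\in D\mid \bm p_{\cdot j}}$ is a non-decreasing function of $\bm p_{\cdot j}$, and feeding this non-decreasing function into M1 (under which $\bm p_{\cdot j}$ is PRDS on its true nulls) shows $\PP{\bm p\in D\mid p_{ij}=x}$ is non-decreasing in $x$. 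An analogous conditioning argument yields the conditional positive dependence across groups (D4); both are the appendix facts cited before the theorem.

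\textbf{Case 1.} With D2, D3, and D4 in hand the two options are immediate. If $\mathcal M_g$ and $\mathcal M$ satisfy 1(b)--1(c) of Theorem~\ref{min-adj-non-adapt}, then D3 supplies hypothesis~(a) of item~1 of that theorem; if instead they satisfy 2(b)--2(c), then D4 together with D2 supplies hypothesis~(a) of item~2. In either case the conclusion of Theorem~\ref{min-adj-non-adapt} gives the asserted bound on $\text{FDR}_v^{PC}$.

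\textbf{Case 2.} Here I would combine self-consistency of $\mathcal M$ with the superuniformity lemma. Since $\mathcal M$ is self-consistent with identity shape function, $i\in\mathcal R$ forces $P_i^{u/n}\le \alpha w_i|\mathcal R|_v/m$, so
$$\text{FDR}_v^{PC}\le \sum_{i\in M_0}\EE{\frac{v_i\,\ind\!\left(P_i^{u/n}\le \alpha w_i|\mathcal R|_v/m\right)}{|\mathcal R|_v}}.$$
I would bound each summand by $\alpha v_iw_i/m$ using Lemma~\ref{ov-PRDS1} under the overall positive dependence D3 (which uses that $\mathcal M$ is non-increasing), and then sum over $i\in M_0$, invoking $\sum_i w_iv_i=m$. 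To apply the lemma one checks its hypotheses for the two combining functions: both are coordinatewise non-decreasing, and for a true-null feature $i$ (so $k(i)<u$) the row has at least $n-u+1$ true-null entries, whence $P_i^{u/n}$, being $f$ evaluated at the $n-u+1$ largest $p$-values of the row, dominates $f$ evaluated at a fixed size-$(n-u+1)$ block of true-null entries. Under within-group independence and the standing assumption that true-null $p$-values are exactly $U(0,1)$, that dominated combination is itself $U(0,1)$, so $P_i^{u/n}$ is superuniform and non-decreasing in the elementary $p$-values, as the lemma requires.

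\textbf{Main obstacle.} The delicate part is the applicability of Lemma~\ref{ov-PRDS1} to Fisher and Stouffer. Unlike the Simes-type $p$-values of Case~1, these combinations cannot borrow self-consistency from an inner procedure $\mathcal M_g$, so the group-level superuniformity bound must be extracted purely from monotonicity of $f$ together with validity of the combined $p$-value against the overall PRDS law of the elementary $p$-values. The exact-uniformity hypothesis on the true-null $p$-values is precisely what secures this validity, since Fisher's and Stouffer's combinations can fail to be superuniform once the inputs are only stochastically larger than uniform; checking that the domination by a true-null block meshes with the conditional-monotonicity step inside the lemma is the crux of the argument.
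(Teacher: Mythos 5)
Your Case 1 is correct and is essentially the paper's own argument: the paper proves Lemma \ref{lemma-prob} (stacking an independent vector onto a PRDS vector preserves PRDS, which yields D3, and conditioning arguments yield D4 together with D2), and then invokes items 1 and 2 of Theorem \ref{min-adj-non-adapt} with rows as groups. Your sketch of the D3/D4 verification is compressed but sound.

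Case 2 has a genuine gap. You correctly observe that Fisher's and Stouffer's combinations are not connected to any self-consistent procedure in the sense of (\ref{PCM}), but then you attempt to invoke Lemma \ref{ov-PRDS1} anyway, claiming its hypotheses amount to coordinatewise monotonicity of the combining function plus superuniformity of $P_i^{u/n}$. That is not what the lemma requires: Lemma \ref{ov-PRDS1} explicitly assumes the partial conjunction $p$-value is of the form (\ref{PCM}) for a procedure $\mathcal{M}_g$ that is self-consistent (item 1(c) of Theorem \ref{min-adj-non-adapt}), and its proof uses that self-consistency in an essential way (the indicator of rejection is rewritten as $|\mathcal{R}_g^A|/|\mathcal{R}_g^A|$ and the self-consistency inclusion converts the event into a sum of elementary-$p$-value events, to which the superuniformity lemma of \cite{RetJ17} applies coordinatewise). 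Superuniformity plus monotonicity of the combination is emphatically not sufficient under D3: the dependency control condition for the pair $\left(P_i^{u/n}, f(\bm p)\right)$ is a positive-dependence statement between a nonlinear functional of the row and an arbitrary non-increasing function of all $p$-values, and PRDS of the elementary $p$-values does not transfer to such functionals (this is exactly the phenomenon, discussed in the paper's introduction, that Simes-type combinations of PRDS $p$-values need not be PRDS). Your closing paragraph names this as ``the crux'' but supplies no argument for it.

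What the paper does instead, and what your proposal is missing, is Lemma \ref{lemma-meta} item 1(i), whose proof has three ingredients: (a) Lemma \ref{help1-u}, reducing the partial conjunction $p$-value to a global-null combination over a block $A$ of true nulls (your ``domination by a true-null block'' covers this step); (b) the Blanchard--Roquain criterion (Lemma \ref{BR-08}), reducing the dependency control condition to showing that $u\mapsto \PP{f(\bm p)<r\,|\,P_i^{1/(n-u+1)}[A]=u}$ is non-decreasing; and (c) Efron's theorem (Theorem \ref{Efron}) on conditional expectations given sums of independent random variables with $PF_2$ densities. Step (c) is the key idea your proof lacks: it exploits the specific algebraic structure $P_i^{1/(n-u+1)}[A]=G\left(\sum_{j\in A}g(p_{ij})\right)$ with $-\log(U)$ exponential and $\Phi^{-1}(U)$ normal (both $PF_2$), together with the monotonicity of $W(x_1,\ldots,x_{n-u+1})=\PP{f(\bm p)<r\,|\,\forall k\in A: p_{ik}=x_k}$, which itself comes from item 3 of Lemma \ref{lemma-prob}. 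This is also precisely where the exact $U(0,1)$ assumption enters, not merely to make $P_i^{u/n}$ valid as you suggest, but to pin down the distributions of $g(p_{ij})$ so that Efron's $PF_2$ hypothesis holds. Without this argument (or an equivalent substitute), your Case 2 does not go through.
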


\begin{remark}
All our theoretical results are obtained under the assumption that the elementary $p$-values are valid, i.e. for each true null elementary hypothesis, the $p$-value is either $U(0,1)$ random variable, or is stochastically larger than this variable. For the result of item 2 of Theorem \ref{thm-meta}, we make a stronger assumption, which  holds when all the test statistics are continuous. 
\end{remark}

As a corollary, we obtain that under independence across studies and positive dependence within each study, the doubly-weighted BH procedure guarantees weighted FDR control on the family $\{H_i^{u/n}, i=1, \ldots, m\},$ when the $p$-values are based on Simes', Simes-Storey (\ref{PC-Storey}), Fisher's, or Stouffer's methods, and the test statistics are continuous.
The result of item 2 is a generalization of the result of \cite{BH08}, showing that in the matrix setting, where the $p$-values within each column are positively dependent, and the $p$-values across the columns are independent, the BH procedure guarantees FDR control when applied on partial conjunction $p$-values based on Stouffer's or Fisher's methods. Item 2 shows that this result remains true for the weighted variants of the BH procedure, as well as other self-consistent procedures, possibly incorporating structural constraints, such as the procedures of \cite{BetS17a} and \cite{filtering}. This result is based on  Lemma \ref{lemma-meta},  which may be of independent interest.
\subsection{Assessing the replicability extent for each feature}\label{proc-rep}
As discussed above, identifying features with replicated signals can be made by testing $\{H_i^{u/n}, i=1, \ldots, m\}$ for $u\geq 2.$  However, as noted by \cite{BHY09}, it may be not clear what value of $u$ should be chosen to establish replicability. 
For example, if $n>2,$ the replicability strength for features with an effect in all studies, i.e. those with $k(i)=n,$ is stronger than for features with an effect in only two studies. Thus, it may be of interest to assess the replicability extent for certain features.
For this purpose, Benjamini et al. \cite{BHY09} developed a procedure, which selects the features with evidence for having an effect in at least one study by applying the BH procedure on the global null $p$-values $\{P_i^{1/n}, i=1, \ldots, m\},$ and estimates, for each selected feature $i,$ a lower bound $\hat{k}(i)$ for $k(i),$ the number of studies where feature $i$ has an effect. This procedure  is based on sequential testing of partial conjunction hypotheses for the selected features. Benjamini et al. \cite{BHY09} proved that it controls the expected proportion of features with false replicability claims, i.e. $\hat{k}(i)>k(i),$ out of all the selected features, under independence within each study. 

In this section an extension of the procedure of \cite{BHY09} is proposed, allowing general selection rules based on the entire matrix $\mathbf{p}_{m\times n}$ for selecting the features for which assessment of replicability strength will be made, as well as incorporating prior and/or penalty weights for the features.
Allowing general selection rules seems to be valuable in practice: for example, a researcher may wish to assess the replicability of findings of a specific study, addressing the BH procedure on one column of the $p$-value matrix as the selection rule. Alternatively, one may be interested in establishing replicability for features with global null $p$-values smaller than a pre-specified threshold, such as Bonferroni's threshold. 
Prior weights may be used for incorporating prior knowledge regarding the features, both in the selection step and in assessing replicability. Penalty weights may be incorporated in the error rate of \cite{BHY09} for assigning different prices for erroneous replicability claims. 
Based on the theoretical results we developed in previous sections, we also extend the theoretical results of \cite{BHY09}, addressing positive and arbitrary dependencies within the studies, in addition to independence.

Assume we are given a vector of penalty weights $(v_1, \ldots, v_m)$ and a vector of prior weights $(w_1, \ldots, w_m)$ satisfying $\sum_{i=1}^m v_iw_i=m,$ as well as a selection rule $\mathcal{S},$ which receives as input the matrix $\mathbf{p}$ and outputs the indices of selected features $\mathcal{S}=\mathcal{S}(\mathbf{p})\subseteq\{1, \ldots, m\}.$ The target error rate of the generalized procedure is 
	\begin{align}\EE{\frac{\sum_{i\in \mathcal{S}}v_i\ind(\hat{k}(i)>k(i))}{|\mathcal{S}|_v,}}, \label{control}\end{align}
 This error rate reduces to the target error rate of \cite{BHY09} when all the penalty weights are equal to unity. Given a shape function $\beta:\mathbb R^+\rightarrow \mathbb R^+,$ the generalized procedure targeting control of (\ref{control}) at level $q$ is given  by the following two-step algorithm.
	\begin{enumerate}
		\item Apply a selection rule $\mathcal{S},$ which receives the matrix of $p$-values $\mathbf{p}$ and outputs the indices of selected features, $\mathcal{S}\subseteq \{1, \ldots, m\}.$ 
		\item For each  selected feature $i\in \mathcal{S}$, test sequentially the partial conjunction hypotheses with $u=1, \ldots, n$ at level $w_i\beta(|\mathcal{S}|_v)q/m,$ 
	in order to find 
	$$\hat{k}(i)=\max\left\{u: \max\{P_i^{1/n}, P_i^{2/n}, \ldots, P_i^{u/n}\}\leq w_i\beta(|\mathcal{S}|_v)q/m\right\},$$
	and claim that $k(i)\geq \hat{k}(i),$ i.e. feature $i$ has an effect in at least $\hat{k}(i)$ studies. The maximum of an empty set is defined as 0. 
   	\end{enumerate}
When the selection rule is the BH procedure applied on the global null $p$-values, and the prior and unity weights are equal to unity, the above procedure reduces to the procedure of \cite{BHY09}. This procedure guarantees that  each selected feature $i$ has a non-trivial lower bound, i.e. $\hat{k}(i)>0,$ provided that the global null $p$-values which are used in Step 1 are computed using the same method as the partial conjunction $p$-values used in Step 2. The same property holds for the generalized procedure, when the BH selection rule is replaced by any multiple testing procedure applied on global null $p$-values, provided that the procedure is self-consistent with respect to thresholds of form (\ref{non-adapt}), with the same shape function and weights as those used in Step 2. 
For theoretical results, we consider the following assumptions on the selection rule in Step 1.
\begin{itemize}
	\item[C1] The selection rule $\mathcal{S}$ is \textit{non-increasing}, in the sense that $|\mathcal{S}(\mathbf{p}^{(1)})|_{\bm v}\geq|\mathcal{S}(\mathbf{p}^{(2)})|_{\bm v},$ if  $\mathbf{p}^{(1)}$ and $\mathbf{p}^{(2)}$ are $m\times n$ matrices satisfying $p^{(1)}_{ij}\leq p^{(2)}_{ij},$ for each $i\in\{1, \ldots, m\}$ and $j\in\{1, \ldots,n\}.$ 
	\item [C2] The selection rule $\mathcal{S}$
	is \textit{stable} and \textit{concordant} with respect to rows of $\mathbf{p}_{m\times n},$ i.e. the following two conditions are satisfied:
	\begin{itemize}
	    \item[(a)] For each $i\in\mathcal{S}(\mathbf{p}),$ fixing all $p$-values in $\mathbf{p}_{(-i)\cdot},$ and changing the $p$-values in row $i$ so that feature $i$ is still selected, will not change the set $\mathcal{S}.$  Let $\mathcal{S}^{(-i)\cdot}=\mathcal{S}^{(-i)\cdot}(\mathbf{p}_{(-i)\cdot})$ be the set of selected features when the $p$-values not corresponding to feature $i$ are defined by $\mathbf{p}_{(-i)\cdot},$ and the $n$ $p$-values for feature $i$ change as long as feature $i$ is selected.
	    \item[(b)] For any two $(m-1)\times n$ matrices  $\mathbf{p}^{(1)}_{(-i)\cdot}$ and $\mathbf{p}^{(2)}_{(-i)\cdot}$ satisfying $\mathbf{p}^{(1)}_{(-i)\cdot}\leq \mathbf{p}^{(2)}_{(-i)\cdot},$ where the inequality is understood coordinate-wise, it holds that $|\mathcal{S}^{(-i)\cdot}(\mathbf{p}^{(1)}_{(-i)\cdot})|_{\bm v}\geq |\mathcal{S}^{(-i)\cdot}(\mathbf{p}^{(2)}_{(-i)\cdot})|_{\bm v}.$ 
	\end{itemize}
\end{itemize}
When $n=1$ and the features are selected if the corresponding hypotheses are rejected by a given multiple testing procedure, each of the above properties 
reduces to the corresponding property of the given multiple testing procedure, as defined in Section \ref{prel}. 
Moreover, for the general case where $n>1,$ assume that the selection of features is made by computing a certain combination of $p$-values within each row (e.g. the global null $p$-values for each feature), and applying a multiple testing procedure on these $m$ combined $p$-values. Assume that the combining function is non-decreasing in each coordinate, as it happens for common global null combining functions. Then the selection rule is non-increasing if the corresponding multiple testing procedure is non-increasing, and the same is true with respect to stability and concordance. Therefore, the selection rule of \cite{BHY09}, which selects the features by applying BH on global null $p$-values, satisfies Conditions C1 and C2 provided that the global null $p$-values are non-decreasing in each elementary $p$-value. Moreovoer, if BH is replaced by its weighted variant or another self-consistent procedure with respect to thresholds of form (\ref{non-adapt}), these conditions are still satisfied. Finally, let us consider selecting the features by applying a multiple testing procedure on the $p$-values of a specific study $j.$ Since for each row $i,$ the combining function $f(p_{i1}, \ldots, p_{in})=p_{ij}$ is non-decreasing in each $p$-value, any non-increasing, stable and concordant multiple testing procedure, such as those given above, will yield a selection rule with the same properties. 
\begin{thm}\label{meta-seq}
Assume independence of $p$-values across the studies, meaning that for each $j\in\{1, \ldots,n\},$ the vectors $\bm{p}_{\cdot j}$ and $\bm{p}_{\cdot (-j)}$ are  independent. 
The generalized procedure consisting of two steps above guarantees 
	\begin{align*}\EE{\frac{\sum_{i\in \mathcal{S}}v_i\ind(\hat{k}(i)>k(i))}{|\mathcal{S}|_v}}\leq q,\end{align*}
i.e. it controls the expected weighted proportion of selected features for which false replicability claims were made, in each of the following cases. 
	\begin{enumerate}
		\item The shape function $\beta$ is the identity function $\beta(x)=x,$ the $p$-values within each study are positively dependent in the sense of item M1 in Section \ref{meta-mult}, and either of the following conditions hold:
		\begin{enumerate}
		    \item The selection rule $\mathcal{S}$ is non-increasing, and the  partial conjunction $p$-values are either based on Fisher's or Stouffer's methods (assuming the condition of item 2 of Theorem \ref{thm-meta}), or are connected in the sense of (\ref{PCM}) to a multiple testing procedure which is non-increasing and self-consistent with respect to thresholds of form $\Delta(i, r)=r\alpha/n,$ such as those based on Simes' method. 
		    
		    \item The selection rule satisfies condition C2, and the partial conjunction $p$-values are connected in the sense of (\ref{PCM}) to a multiple testing procedure which is self-consistent with respect to thresholds of the form $\Delta(i,r)=r\alpha/n\hat{\pi}_0^{-1},$ with the estimator $\hat{\pi}_0$ satisfying Condition \ref{cond-adapt} with $m$ replaced by $n$, such as those based on Simes-Storey method (\ref{PC-Storey}).
		\end{enumerate}
		\item The shape function $\beta$ is the identity function $\beta(x)=x,$  the $p$-values within each study are independent, the selection rule $\mathcal{S}$ is stable, and the partial conjunction $p$-values may be arbitrary, as long as they are valid under independence.
		\item The  shape function $\beta$ is of form (\ref{beta}), e.g. $\beta(x)=x/(\sum_{i=1}^m1/i),$ and the method for obtaining partial conjunction $p$-values is valid under independence. The dependence within each study and the selection rule $\mathcal{S}$ may be arbitrary. 
	\end{enumerate}
\end{thm}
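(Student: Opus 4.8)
The plan is to reduce the control of the selective error rate in (\ref{control}) to a weighted-FDR statement about a single ``boundary'' partial conjunction hypothesis per feature, and then to invoke, case by case, the self-consistency machinery already developed for Theorems \ref{min-adj-non-adapt} and \ref{thm-meta}. First I would exploit that $\max\{P_i^{1/n}, \ldots, P_i^{u/n}\}$ is non-decreasing in $u$: writing $u_i^\star = k(i)+1$, the definition of $\hat k(i)$ gives, on the event $i \in \mathcal{S}$,
\begin{align*}
\{\hat k(i) > k(i)\} \subseteq \left\{P_i^{u_i^\star/n} \leq w_i \beta(|\mathcal{S}|_v) q / m\right\}.
\end{align*}
The hypothesis $H_i^{u_i^\star/n}$ is a \emph{true} partial conjunction null, since feature $i$ has an effect in $k(i) < u_i^\star$ studies, and hence $P_i^{u_i^\star/n}$ is a valid $p$-value under independence across studies. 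This replaces the entire sequential step by a single indicator, so the target error rate is bounded by
\begin{align*}
\EE{\frac{\sum_{i=1}^m v_i \ind(i\in\mathcal{S})\,\ind\!\left(P_i^{u_i^\star/n} \leq w_i \beta(|\mathcal{S}|_v) q/m\right)}{|\mathcal{S}|_v}},
\end{align*}
with the convention $0/0=0$ of Remark~\ref{technical} handling $\mathcal{S}=\emptyset$.

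Next I would observe that this expression has exactly the form controlled by the Blanchard--Roquain self-consistency bounds, with the selection volume $R=|\mathcal{S}|_v$ playing the role of the rejection volume and $\Delta(i,r)=w_i\beta(r)q/m$ the threshold collection. The three cases then differ only in how the per-feature term $\EE{\ind(P_i^{u_i^\star/n}\leq w_i\beta(|\mathcal{S}|_v)q/m)/|\mathcal{S}|_v}$ is bounded by $w_iq/m$; summing over $i$ and using $\sum_i v_i w_i=m$ yields the bound $q$. For Case 3 (arbitrary dependence within studies, shape function of form (\ref{beta})) I would apply the arbitrary-dependence argument behind item 3 of Proposition \ref{main-FDR}: for the shape $\beta_\nu$ the inequality $\EE{\ind(P\leq w_i\beta_\nu(R)q/m)/R}\le w_iq/m$ holds for \emph{any} non-negative $R$ and any valid $P$, so neither monotonicity of $\mathcal{S}$ nor the within-study dependence is needed. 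For Case 2 (independence within studies, stable $\mathcal{S}$, identity $\beta$) I would use stability (C2(a)): on $\{i\in\mathcal{S}\}$ one has $|\mathcal{S}|_v=|\mathcal{S}^{(-i)\cdot}|_v$, a function of $\mathbf{p}_{(-i)\cdot}$ alone; conditioning on $\mathbf{p}_{(-i)\cdot}$, which is independent of row $i$, freezes the threshold, and validity of $P_i^{u_i^\star/n}$ delivers the conditional bound directly.

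Case 1 is the main obstacle, because the positive dependence within each study (item M1) couples the boundary $p$-values $P_i^{u_i^\star/n}$ across features and also couples them to $|\mathcal{S}|_v$, so the clean conditioning of Case 2 is unavailable and the identity shape function forbids the arbitrary-dependence trick. Here I would lean on the generalized group-level superuniformity lemma, Lemma \ref{ov-PRDS1}, applied with the rows of the matrix as the groups: since M1 implies both overall positive dependence (D3) and conditional positive dependence across groups with independence within groups (D4, D2), and since C1 (resp.\ C2) makes $|\mathcal{S}|_v$ coordinatewise non-increasing (resp.\ stable and concordant), the lemma yields the per-feature bound for the Simes- and Simes--Storey-type partial conjunction $p$-values, exactly as in the proofs of items 1 and 2 of Theorem \ref{min-adj-non-adapt}; sub-case 1(b) additionally invokes the adaptive structure and Condition \ref{cond-adapt}. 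The genuinely new ingredient is the Fisher/Stouffer branch of sub-case 1(a): these combining functions are not induced by a self-consistent procedure, so the monotone-rejection structure is missing, and I would instead invoke Lemma \ref{lemma-meta}, which establishes the required conditional superuniformity of the Fisher and Stouffer partial conjunction $p$-values under within-column positive dependence and across-column independence. The delicate point throughout is verifying that the non-increasing and stable/concordant properties of the \emph{selection rule} (Conditions C1--C2) transfer correctly into the monotonicity hypotheses required by Lemmas \ref{ov-PRDS1} and \ref{lemma-meta}, after which combining the per-feature bounds and summing completes every case.
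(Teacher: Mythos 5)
Your proposal is correct and follows essentially the same route as the paper: your reduction to the single ``boundary'' null $H_i^{(k(i)+1)/n}$ and the resulting per-feature bound is exactly the content of the paper's Lemma \ref{lemma-seq}, and your case-by-case verification (Blanchard--Roquain's arbitrary-dependence inequality for Case 3, conditioning on $\mathbf{p}_{(-i)\cdot}$ via stability for Case 2, and Lemmas \ref{ov-PRDS1}/\ref{lemma-meta} with Conditions C1/C2 supplying the monotonicity for Case 1) matches the paper's argument step for step.
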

As mentioned above, the procedure of \cite{BHY09} is a special case of the procedure considered in Theorem \ref{meta-seq}, and since the selection rule considered in \cite{BHY09} is non-increasing, the theoretical result of \cite{BHY09} is covered by item 2. 
Theorem \ref{meta-seq} generalizes the result of \cite{BHY09}, showing that it remains to hold under positive dependence within each study for several  choices of partial conjunction $p$-values, such as those based on Fisher's, Stouffer's, Simes' and Simes-Storey (\ref{PC-Storey}) methods. Moreover, the BH procedure may be replaced by other selection rules, and one may incorporate prior and penalty weights, provided that the above conditions hold. Finally, according to item 3, one may address arbitrary selection rules and arbitrary dependencies within the studies by replacing the threshold $w_i|\mathcal{S}|_vq/m$ in Step 2 by a more conservative threshold, e.g. $w_i|\mathcal{S}|_vq/(m\sum_{i=1}^m1/i).$


\section{Technical remarks and key lemmas}\label{sec-lemmas}
The main results in this paper are based on the following lemmas, in conjunction with Proposition 2.7 in \cite{blanchard2008two}, which gives two sufficient conditions for (weighted) FDR control, and a similar proposition addressing stable procedures (see Appendix \ref{appendix:suff}). The first condition
is on the multiple testing procedure, requiring self-consistency with respect to thresholds of form (\ref{non-adapt}). The second condition addresses the dependency structure among the $p$-values,
requiring that for each true null hypothesis, its $p$-value and $|\mathcal{R}(\bm{p})|_v$ (replaced by another quantity for stable procedures), satisfy the dependency control condition, defined below. 
\begin{definition}[Dependency control condition, \cite{blanchard2008two}]
Let $\beta: \mathbb{R}^+\rightarrow \mathbb{R}^+$ be a non-decreasing function. A couple $(U, V)$ of non-negative real random variables satisfy the dependency control condition with shape function $\beta$ if the following inequalities hold:
$$\forall c>0,\,\,\,\EE{\frac{\ind(U\leq c\beta(V))}{V}}\leq c.$$
\end{definition}
The lemmas below yield that the second sufficient condition  for (weighted) FDR control is satisfied under the conditions of Theorems \ref{min-adj-non-adapt} and \ref{thm-meta}. Lemma \ref{lemma-meta} is used for proving Theorem \ref{meta-seq}. Given a function $g: [0, 1]^k\rightarrow [0,\infty),$  we say that $g$ is non-increasing if for any two vectors $\bm{x}_1, \bm{x}_2\in[0,1]^k$ satisfying $\bm{x}_1\leq \bm{x}_2$ where the inequality is understood coordinate-wise, it holds that $g(\bm{x}_1)\geq g(\bm{x}_2).$ 
\begin{lem}\label{ov-PRDS1}
Consider the setting of Section \ref{setting}, where  $\bm{p}$ is the vector  of all the elementary $p$-values, and $\bm{p}^{-g}$ is the vector of all the $p$-values excluding those belonging to group $g,$ for $g=1, \ldots, G.$ Let $g\in \mathcal{G}_0,$ so the partial conjunction null for group $g,$ $H_0^{u_g/n_g},$ is true. Assume that $P_g^{u_g/n_g}$ is connected to a multiple testing procedure $\mathcal{M}_g$ in the sense of (\ref{PCM}). Let $f:[0,1]^M\rightarrow [0, \infty)$ and $h:[0,1]^{M-n_g}\rightarrow [0, \infty)$ be two 
non-increasing functions. 
\begin{enumerate}
	\item 	Assume that the elementary $p$-values satisfy the  overall positive dependence conditions, as defined in item D3 of Section \ref{setting}. 
	Assume that $\mathcal{M}_g$ satisfies condition (c) of item 1 of Theorem \ref{min-adj-non-adapt}, which holds, for example, if $P_g^{u_g/n_g}$ is based on Simes' method.  Then the pair 
	$(P_g^{u_g/n_g}, f(\bm{p}))$ satisfies the dependency control condition with the identity shape function $\beta(x)=x.$ 
	\item Assume that the elementary $p$-values satisfy conditional positive dependence across groups and independence within each group, in the sense of items D4 and D2 in Section \ref{setting}. Assume that  $\mathcal{M}_g$ satisfies condition (c) of item 2 of Theorem \ref{min-adj-non-adapt}, which holds, for example, if $P_g^{u_g/n_g}$ is based on Simes-Storey method (\ref{PC-Storey}).    Then the pair $(P_g^{u_g/n_g}, h(\bm{p}^{-g}))$ satisfies the dependency control condition with the identity shape function $\beta(x)=x.$ 
	\item Assume that the elementary $p$-values satisfy positive dependence across groups in the sense of item D5 in Section \ref{setting}. Assume that $\mathcal{M}_g$ is the Bonferroni procedure, i.e. $P_g^{u_g/n_g}=(n_g-u_g+1)p_{g(u_g)},$ where $p_{g(1)}\leq \ldots\leq p_{g(n_g)}$ is the sequence of ordered $p$-values for the hypotheses in group $g.$ 
	Then, under any dependency within each group, the pair $(P_g^{u_g/n_g}, h(\bm p^{-g}))$ satisfies the dependency control condition with the identity shape function $\beta(x)=x.$
\end{enumerate}	
\end{lem}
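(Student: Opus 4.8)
The three parts share a common reduction that I would set up first. Since $g\in\mathcal{G}_0$ we have $k_g<u_g$, so group $g$ contains at least $n_g-k_g\ge n_g-u_g+1$ true null hypotheses. Fix once and for all a subset $A^{*}\subseteq A_g$ consisting of true nulls with $|A^{*}|=n_g-u_g+1=:k$. Because $P_g^{u_g/n_g}$ is connected to $\mathcal{M}_g$ through (\ref{PCM}), it is a maximum over all size-$k$ subsets of $A_g$ of the minimum adjusted $p$-value of $\mathcal{M}_g$, so $P_g^{u_g/n_g}\ge P^{1/k}[A^{*}]$, where $P^{1/k}[A^{*}]$ is the minimum adjusted $p$-value of $\mathcal{M}_g$ run on the \emph{all-true-null} family $A^{*}$. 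Hence $\ind(P_g^{u_g/n_g}\le cV)\le\ind(P^{1/k}[A^{*}]\le cV)$ for the relevant $V$ (either $f(\bm p)$ or $h(\bm p^{-g})$), and it suffices to verify the dependency control condition for the pair $(P^{1/k}[A^{*}],V)$ with the identity shape function. The engine in every case is the single-$p$-value dependency control condition: for a true null $p_i$ and a non-increasing statistic $V$, $\EE{\ind(p_i\le c'V)/V}\le c'$, which holds under each positive-dependence assumption via the \cite{BY01}/\cite{blanchard2008two} argument and its adaptive analogue in \cite{blan2009}.

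I would dispatch Part 3 first, as it is the most transparent. Here $\mathcal{M}_g$ is Bonferroni, so $P^{1/k}[A^{*}]=k\min_{i\in A^{*}}p_i$, and a union bound gives $\ind(P^{1/k}[A^{*}]\le cV)\le\sum_{i\in A^{*}}\ind(p_i\le cV/k)$. Since $V=h(\bm p^{-g})$ is a non-increasing function of $\bm p^{-g}$ alone, and assumption D5 makes $(\bm p^{-g},p_i)$ PRDS with respect to each true null $p_i$, the single-$p$-value dependency control condition applies with $c'=c/k$ to every summand, giving
\begin{align*}
\EE{\frac{\ind(P^{1/k}[A^{*}]\le cV)}{V}}\le\sum_{i\in A^{*}}\EE{\frac{\ind(p_i\le cV/k)}{V}}\le k\cdot\frac{c}{k}=c.
\end{align*}
The dependence within group $g$ never enters, because $V$ ignores the within-group $p$-values and the union bound has already replaced $P^{1/k}[A^{*}]$ by individual true nulls.

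For Part 1 the inner procedure $\mathcal{M}_g$ is self-consistent with the identity shape function (e.g.\ Simes/BH), so $P^{1/k}[A^{*}]$ is no longer a plain minimum. I would exploit self-consistency to decompose $\{P^{1/k}[A^{*}]\le s\}$: at level $s$ the inner procedure rejects a set of size $r_g(s)\ge1$, and every rejected index obeys $p_i\le s\,r_g(s)/k$, so that $\ind(P^{1/k}[A^{*}]\le s)$ is bounded by $r_g(s)^{-1}\sum_{i\in A^{*}}\ind(p_i\le s\,r_g(s)/k)$. Substituting $s=cV$, the ratio $r_g(s)/k\le1$ together with the overall PRDS assumption D3 lets me pass to the single-$p$-value dependency control condition for each true null $p_i$ paired with the non-increasing $V=f(\bm p)$; under D3 the whole vector $\bm p$ is PRDS on the true nulls, which supplies the joint monotone structure needed even though $V$ now depends on \emph{all} the $p$-values. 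The care required is in packaging the data-dependent threshold $cV\,r_g(cV)/k$ so that the single-coordinate bound can be invoked without circularity.

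Part 2 is the delicate case and the step I expect to be the main obstacle, since the inner procedure is the adaptive Simes--Storey procedure, so the self-consistency threshold carries the random factor $\hat\pi_0^{-1}$, and the argument must draw on \emph{both} within-group independence (D2) and cross-group conditional positive dependence (D4). My plan is to condition on $\bm p_g^{-i}$, the within-group $p$-values other than $p_i$: under D2 this leaves $p_i$ independent of the remaining within-group coordinates, which is precisely what makes the Storey-type estimator obey Condition \ref{cond-adapt}(b) (with $m$ replaced by the within-group count) and yields the adaptive analogue of the single-$p$-value dependency control inside group $g$; meanwhile D4 guarantees that, conditionally on $\bm p_g^{-i}$, the vector $\bm p$ is PRDS with respect to $p_i$, so the dependence of $V=h(\bm p^{-g})$ on $p_i$ through the cross-group linkage is monotone in the favorable direction, and integrating back over $\bm p^{-g}$ preserves the bound $c$. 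Coordinating the adaptive mean bound, which demands exact within-group independence, with the cross-group positive dependence carried by D4 — choosing the conditioning so that neither ingredient is destroyed — is the crux of the argument and the place where the techniques of \cite{blanchard2008two} and \cite{blan2009} must be combined most carefully.
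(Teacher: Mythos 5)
Your proposal is correct, and its three per-case arguments follow essentially the same route as the paper's proof: Part 3 via the union bound and the single-coordinate dependency control condition under D5; Part 1 via the self-consistency decomposition $\ind(P^{1/k}[A^*]\le s)\le r_g(s)^{-1}\sum_{i\in A^*}\ind\left(p_i\le s\,r_g(s)/k\right)$ followed by the superuniformity bound applied to the packaged non-increasing statistic $r_g(cV)\,V$ (note that your remark about $r_g(s)/k\le 1$ is not the operative fact --- what matters, and what your ``packaging'' comment correctly identifies, is that the pair $(p_i,\,r_g(cV)V)$ satisfies the dependency control condition with constant $c/k$, since $r_g(cV)V$ is coordinatewise non-increasing in $\bm p$); and Part 2 via conditioning on $\bm p_g^{-i}$, using D2 to invoke Condition \ref{cond-adapt}(b) and D4 for the conditional PRDS, exactly as in the paper (the paper additionally replaces $\hat\pi_0(\bm p^A)$ by $\hat\pi_0(\bm p^A_{(0,i)})$ using coordinatewise monotonicity, so that the estimator is constant under the conditioning and its inverse mean can be bounded by Condition \ref{cond-adapt}(b) after integrating back; your sketch omits this step but it fits your plan and is needed to make it rigorous). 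The one genuine difference is the reduction step: the paper proves an auxiliary result (Lemma \ref{help1-u}) that bounds the indicator of the maximum over \emph{all} null subsets of size $n_g-u_g+1$ by the average of the individual indicators over those subsets, whereas you fix a single null subset $A^*$ and use the pointwise inequality $P_g^{u_g/n_g}\ge P^{1/(n_g-u_g+1)}[A^*]$. Your reduction is simpler and equally valid (modulo the paper's $0/0=0$ convention of Remark \ref{technical} when dividing by possibly-zero quantities); the paper's lemma is stated for a general shape function $\beta$ with $\beta(0)=0$ and is reused verbatim in the proof of Lemma \ref{lemma-meta} for the Fisher and Stouffer combinations, but nothing there requires the averaging rather than your single-subset bound, so your route would serve in that proof as well.
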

Let us fix a certain partial conjunction $p$-value $P^{u_g/n_g}.$ The result that the dependency control condition holds for $(P_g^{u_g/n_g}, f(\bm{p}))$ with the identity shape function $\beta(x)=x$ for any non-increasing function $f:[0,1]^M\rightarrow [0, \infty)$  is equivalent to the result that the  inequality
\begin{align}\EE{\frac{\ind(P_g^{u_g/n_g}\leq f(\bm p))}{f(\bm p)}}\leq 1
	\label{itemRamdas1}\end{align}
	is satisfied for any such function $f.$ The equivalence is obtained as follows. Assume that (\ref{itemRamdas1}) is satisfied for any non-increasing function $f:[0,1]^M\rightarrow [0, \infty).$ Given such function $f$ and a constant $c>0,$  using the inequality in (\ref{itemRamdas1}) for $\tilde{f}=cf,$ which is also a non-increasing function, we obtain 
	\begin{align}\EE{\frac{\ind(P_g^{u_g/n_g}\leq cf(\bm{p}))}{f(\bm{p})}}\leq c
	\label{itemRamdas3},\end{align}
	which implies that the pair $(P_g^{u_g/n_g}, f(\bm{p}))$ satisfies the dependency control condition. On the other hand,  if the inequality in  (\ref{itemRamdas3}) holds for any non-increasing function $f:[0,1]^M\rightarrow[0, \infty)$ and for any $c>0,$ then it holds in particular for $c=1,$ implying (\ref{itemRamdas1}) for any such function $f.$
	The inequality  (\ref{itemRamdas1}) 
	is similar in form to the inequalities in the group-level superuniformity lemma of \cite{RetJ17}. Moreover, based on the equivalence we showed above, we obtain that item 1 of Lemma \ref{ov-PRDS1} for the case where $u_g=1$ and $P^{1/n_g}$ is based on Simes' method reduces to item (b) of the group-level superuniformity lemma of \cite{RetJ17}. Therefore, Lemma \ref{ov-PRDS1} may be viewed as complementing the group-level superuniformity lemma of \cite{RetJ17}, extending the results for Simes' global null $p$-values to general partial conjunction $p$-values, and addressing additional combining methods and dependency structures. Since the latter lemma was useful for proving FDR control results for some methods addressing groups of hypotheses, such as  $p$-filter \cite{RetJ17}, DAGGER \cite{ramdas2019sequential},  TreeBH \cite{bogomolov2020hypotheses}, and Focused BH \cite{filtering}, Lemma \ref{ov-PRDS1} may be useful for extending those results. 
	
	The following lemma addresses the meta-analysis setting given in Section \ref{setting-metaanalysis}, and is useful for obtaining  theoretical results  addressing positive dependence within each study. The results addressing independence and arbitrary dependence follow directly from the results of \cite{blanchard2008two}.  
\begin{lem}\label{lemma-meta}
	Consider the meta-analysis setting given in Section \ref{setting-metaanalysis}. Assume independence of $p$-values across the studies, meaning that for each $j\in\{1, \ldots,n\},$ the vectors $\bm{p}_{\cdot j}$ and $\bm{p}_{\cdot (-j)}$ are independent. In addition, assume positive dependence within each study, in the sense of item M1 in Section \ref{meta-mult}.
	 Let $i$ be a feature for which the partial conjunction null $H_i^{u/n}$ is true. 
	Let $f:[0,1]^{m\times n}\rightarrow [0, \infty)$ and $h:[0,1]^{(m-1)\times n}\rightarrow [0, \infty)$ be two non-increasing functions. 
	\begin{enumerate}
	\item  
	Assume that either of the following conditions holds: (i) $P_i^{u/n}$ is  based on Fisher's or Stouffer's methods, and the elementary $p$-values for true null hypotheses are $U(0,1)$ random variables, or (ii) $P_i^{u/n}$ is connected in the sense of (\ref{PCM}) to a multiple testing procedure , which satisfies condition (c) of item 1 of Theorem \ref{min-adj-non-adapt}, for example $P_i^{u/n}$ is based on Simes' method.  Then the pair
$(P_i^{u/n}, f(\bm{p}))$ satisfies the dependency control condition with  the identity shape function $\beta(x)=x.$
\item Assume that $P_i^{u/n}$ is connected in the sense of (\ref{PCM}) to a multiple testing procedure which satisfies condition (c) of item 2 of Theorem \ref{min-adj-non-adapt}, 
for example $P_i^{u/n}$ is based on Simes-Storey method (\ref{PC-Storey}).  Then the pair
$(P_i^{u/n}, h(\bm{p}_{(-i)\cdot})$ satisfies the dependency control condition with the identity shape function $\beta(x)=x.$
	\end{enumerate}
\end{lem}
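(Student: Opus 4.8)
The plan is to verify the dependency control condition in every case through the equivalent inequality (\ref{itemRamdas1}) (with $h(\bm p_{(-i)\cdot})$ replacing $f(\bm p)$ in item~2), using the equivalence between that condition and (\ref{itemRamdas1}) established just above. The structural facts I would rely on throughout are that $P_i^{u/n}$ is a function of row $i$ alone; that independence across studies makes the entries $p_{i1},\dots,p_{in}$ of that row mutually independent, since each lies in a distinct independent column, so every combined $p$-value is valid; and that the within-study PRDS of item~M1 couples a given $p_{ij}$ only to the other entries of its own column. Consequently the entire difficulty is the interaction between the combined statistic and the non-increasing denominator, which depends on the whole matrix.

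For the Simes-type case 1(ii) and the Simes--Storey case (item~2) I would avoid any fresh computation and reduce to Lemma~\ref{ov-PRDS1}, viewing each feature (row) as a group. This rests on two implications: independence across studies together with M1 yields the overall positive dependence D3, and also yields the conditional positive dependence across groups D4 together with independence within groups D2. To check D3 (and, after one extra conditioning, D4), I would fix a true null $H_{i_0 j_0}$ and condition on $p_{i_0 j_0}=x$; because the columns are independent this conditioning affects only column $j_0$, every section of a non-decreasing set is again a non-decreasing set in that column, and PRDS of column $j_0$ on its true nulls makes the conditional probability non-decreasing in $x$, a property preserved upon integrating over the independent remaining columns. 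Property D2 is immediate. With these implications, case 1(ii) is exactly item~1 of Lemma~\ref{ov-PRDS1} applied with the group $A_g$ equal to row $i$ and the full vector $\bm p$, while item~2 is exactly item~2 of Lemma~\ref{ov-PRDS1} with $\bm p^{-g}=\bm p_{(-i)\cdot}$.

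The genuinely new part is case 1(i), Fisher's and Stouffer's methods, which are not connected to a self-consistent procedure in the sense of (\ref{PCM}), so Lemma~\ref{ov-PRDS1} does not cover them. Here I would first pass, by monotonicity of the combining function and order-statistic domination, from $P_i^{u/n}$ to the smaller quantity $\tilde P$, the Fisher (resp.\ Stouffer) combination of the $a:=n-u+1$ largest true-null entries of row $i$: since these entries are independent across studies and, under the continuity assumption, exactly $U(0,1)$, the statistic $\tilde P$ is super-uniform, and exactly uniform in the least favorable configuration $k(i)=u-1$. Because $\{P_i^{u/n}\le f(\bm p)\}\subseteq\{\tilde P\le f(\bm p)\}$, it suffices to prove (\ref{itemRamdas1}) with $\tilde P$ in place of $P_i^{u/n}$, and I would open this with the layer-cake identity $1/f(\bm p)=\int_0^\infty v^{-2}\ind(f(\bm p)\le v)\,dv$, which recasts the left-hand side as $\int_0^\infty v^{-2}\,\PP{\tilde P\le f(\bm p)\le v}\,dv$.

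I expect the \emph{main obstacle} to be exactly the control of $\PP{\tilde P\le f(\bm p)\le v}$ for Fisher and Stouffer. Unlike Simes, these are nonlinear in the individual $p$-values, so conditioning on all but one study converts $\{\tilde P\le f(\bm p)\}$ into a one-coordinate event whose threshold is not proportional to the denominator; the single-coordinate PRDS superuniformity argument underlying \cite{RetJ17} and \cite{blanchard2008two} therefore cannot be applied study by study, as it can in the Simes-based cases. My plan to close the estimate is to use independence across studies to factorize the joint event over the true-null columns, to dominate each column's contribution by a super-uniform one via the within-study PRDS, and to exploit exact uniformity so that these contributions recombine into the exact $\chi^2_{2a}$ (resp.\ Gaussian) null calibration of the combined statistic, whereupon the integral over $v$ evaluates to at most $1$. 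This estimate, and not the reductions of the first two cases, is where the real work resides, and is what gives Lemma~\ref{lemma-meta} its independent interest.
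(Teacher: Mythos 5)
Your reductions for case 1(ii) and for item 2 are correct and coincide with the paper's own argument: the paper proves exactly the two implications you sketch (independence across studies plus M1 implies D3, and implies D2 together with D4 --- this is Lemma \ref{lemma-prob}, proved by the same condition-on-one-column argument you outline), and then invokes items 1 and 2 of Lemma \ref{ov-PRDS1} with the rows of $\mathbf{p}$ playing the role of groups. So for those two cases there is nothing to fix.

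The gap is in case 1(i), which you rightly identify as the substantive part but for which your plan lacks the key mechanism. The paper first reduces, via Lemma \ref{help1-u}, to a \emph{fixed, deterministic} null subset $A\subseteq\{1,\ldots,n\}$ with $|A|=n-u+1$, so that $P_i^{1/(n-u+1)}[A]$ is a Fisher/Stouffer combination of i.i.d.\ $U(0,1)$ variables and hence exactly uniform. It then invokes item (ii) of Lemma 3.2 of \cite{blanchard2008two} (restated as Lemma \ref{BR-08}): the dependency control condition for the pair $(P_i^{1/(n-u+1)}[A], f(\bm p))$ follows once one shows that $u\mapsto\PP{f(\bm p)<r\,|\,P_i^{1/(n-u+1)}[A]=u}$ is non-decreasing for every $r\geq 0.$ This monotonicity is the crux, and it is obtained by writing the combination as $G\bigl(\sum_{j\in A}g(p_{ij})\bigr)$ with $G, g$ strictly increasing and the variables $g(p_{ij})$ independent with $PF_2$ densities (exponential for Fisher, normal for Stouffer), noting that $W(\bm x)=\PP{f(\bm p)<r\,|\,\bm p_i^A=\bm x}$ is coordinate-wise non-decreasing (by the conditional positive dependence across rows, item 3 of Lemma \ref{lemma-prob}), and applying Efron's theorem \cite{efron1965increasing} (Theorem \ref{Efron}): the conditional expectation of a coordinate-wise monotone function of independent $PF_2$ variables given their sum is monotone in the sum. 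Your proposal contains no substitute for this step. The event $\{\tilde P\le f(\bm p)\le v\}$ cannot be ``factorized over the true-null columns,'' because $f(\bm p)$ couples all columns and $\tilde P$ couples the columns of $A$ nonlinearly; ``dominating each column's contribution by a super-uniform one'' is exactly the single-coordinate PRDS argument you yourself observe is unavailable for Fisher and Stouffer; and ``recombining into the exact $\chi^2_{2a}$ calibration'' is an assertion, not an argument. A secondary problem is your preliminary reduction to $\tilde P$, the combination of the $a$ largest \emph{true-null} entries: that subset is data-dependent, so $\tilde P$ is a function of order statistics --- not a combination of independent uniforms --- and is exactly uniform only when $k(i)=u-1$; any subsequent conditioning or independence-based argument breaks on it. The clean reduction is to fixed null subsets, averaging the resulting indicator bounds over all of them, which is precisely what Lemma \ref{help1-u} does.
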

The result regarding Fisher's and Stouffer's method is shown based on the proof techniques of \cite{BH08}. Similarly to \cite{BH08}, we rely on the theorem of Efron \cite{efron1965increasing}. The remaining results follow from the 
fact that the hypotheses corresponding to the same feature in different studies may be viewed as a group in the setting of Section \ref{setting},
and the dependency structure of Lemma \ref{lemma-meta} with respect to this group structure satisfies the conditions of items 1 and 2 of Lemma \ref{ov-PRDS1}.  
Similarly to the results of Lemma \ref{ov-PRDS1},  the results of Lemma \ref{lemma-meta} may be given in the form of (\ref{itemRamdas1}), and may also be viewed as generalizing the group-level superuniformity lemma of \cite{RetJ17}.
\section{Discussion}\label{discussion}
We obtained sufficient conditions for FDR control on partial conjunction hypotheses, addressing different dependency structures across and within the groups. For a given division into groups, we addressed two goals: (1) testing 
a pre-specified family of partial conjunction hypotheses $\{H_0^{u_g/n_g},\,\, g=1,\ldots, G\}$ with FDR control, and (2) selecting a subset of interesting groups via a pre-specified selection rule, and estimating the  number of false null hypotheses in each selected group, so that the expected  fraction of groups with over-estimated number of false null hypotheses, among all the selected groups, is bounded by a pre-specified level $\alpha.$  The latter goal was addressed in the meta-analysis setting, however, as we show in Section \ref{meta-mult}, this setting can be considered as a group setting of Section \ref{mult-PC}. 
Therefore,  these two goals can be unified as follows. 
For a single set of hypotheses $\mathcal{R}\subseteq \{1, \ldots, G\},$ defined by an a-priori fixed selection rule, it is required that
\begin{align}\EE{\frac{\sum_{g\in \mathcal{R}}v_g\ind(|A_g\cap M_1|< r_g)}{|\mathcal{R}|_v}}\leq\alpha,\label{fdr-gen}\end{align}
where $M_1\subseteq\{1, \ldots, M\}$ is the index set of all the false null hypotheses, $r_g=u_g$  for the procedures in Section \ref{mult-PC}, and $r_g$ is the estimator for the number of false null hypotheses in $A_g,$ which is an additional output of the procedure in Section \ref{proc-rep}. 
In the setting of Section \ref{mult-PC}, the selection rule is defined by the multiple testing procedure which is applied on the partial conjunction $p$-values, while in the setting of Section \ref{proc-rep}, the selection rule may be more general, however it should be fixed in advance. 

Goeman and Solari (\cite{goeman2011multiple}) suggested a flexible approach to multiple testing, where a researcher may select any subset of hypotheses $S\subseteq\{1, \ldots, M\}$, based on the data and possibly prior knowledge, and obtain a lower confidence bound $d^M(S)$ for the number of false null hypotheses in this set. For valid post-hoc inference, the lower bounds are required to satisfy the following simultaneity property:
\begin{align}\PP{d^M(S)\leq |S\cap M_1|\text{ for all }S\subseteq \{1, \ldots,M\}}\geq 1-\alpha.\label{lower}\end{align}
These lower bounds can be used for obtaining  simultaneous upper bounds for the false discovery proportions (FDP) for all the groups, therefore procedures for obtaining these bounds are often referred to as FDP-controlling procedures. In our group setting, the approach of Goeman and Solari (\cite{goeman2011multiple}) may be translated to having the flexibility to select any subset of groups $K\subseteq\{1, \ldots, G\},$ and obtain a lower bound on the number of false null hypotheses within each selected group $g\in K,$ rather than obtaining lower bounds only for one set $\mathcal{R},$ outputted by a fixed in advance selection rule. This flexibility can be obtained by replacing the requirement in (\ref{fdr-gen}) by the following:
\begin{align}
\PP{r_g\leq |A_g\cap M_1|\text{ for all }g\subseteq \{1, \ldots,G\}}\geq 1-\alpha,\label{gen}\end{align}
where $r_g$ are random bounds, and $r_g\in\{0, u_g\}$ in the setting of Section \ref{mult-PC}, while $r_g\in \{0,1,\ldots, n_g\}$ in the setting of Section \ref{proc-rep}. It is easy to see that (\ref{gen}) is a more strict requirement than our requirement (\ref{fdr-gen}), i.e. (\ref{gen}) implies (\ref{fdr-gen}). Indeed, (\ref{gen}) is equivalent to
\begin{align}
\EE{\ind\left\{\sum_{g=1}^G\ind(|A_g\cap M_1|<r_g)>0\right\}}\leq \alpha,\label{gen-1}\end{align}
which implies (\ref{fdr-gen}).
%
Obviously, given simultaneous lower bounds for all the subsets, i.e. $d^M(S)$ satisfying (\ref{lower}), we obtain lower bounds $r_g$ satisfying (\ref{gen}) for the given set of groups with index set $g=1, \ldots, G$ in the setting of Section \ref{proc-rep}. In the setting of Section \ref{mult-PC}, the lower bounds $r_g=u_g\ind(d^M(A_g)\geq u_g)$ satisfy (\ref{gen}), and a procedure which rejects $H_0^{u_g/n_g}$ if $r_g=u_g$ controls the FWER on any subset of partial conjunction hypotheses. In addition, for any subset of groups with index set $K\subseteq\{1, \ldots, G\},$ one may obtain a lower bound for the number of false partial conjunction hypotheses in the set $\{H_0^{u_g/n_g}, \,\, g\in K\},$ by computing $d^G(K)=\sum_{g\in K} \ind(d^M(A_g)\geq u_g),$ and these are simultaneous $(1-\alpha)$-confidence lower bounds for all subsets $K\in\{1, \ldots, G\}$ provided that $d^M(S)$ satisfy (\ref{lower}).  Therefore, flexible inference on a-priori given groups of hypotheses $\{A_g, \,\,g=1, \ldots, G\}$ can be obtained by an FDP-controlling procedure, outputting $d^M(S)$ satisfying (\ref{lower}). However, if only FWER or FDP control on the family of $\{H_0^{u_g/n_g}, \,\,g=1, \ldots, G\}$ is of interest,  which can be obtained via control of (\ref{gen}) with $r_g\in\{0, u_g\},$
then one may apply an FDP-controlling procedure on the set of partial conjunction $p$-values $\{P_g^{u_g/n_g}, \,\,g=1, \ldots, G\}$ rather than on all the elementary $p$-values $\{p_1,\ldots,p_M\},$ which may offer a power gain. The results of this paper may be useful for obtaining validity of such methods under different dependency structures within and across the groups. We discuss this next.

Goeman and Solari (\cite{goeman2011multiple}) showed that closed testing can be used for obtaining FDP control (\ref{lower}). Several other procedures for controlling the FDP have been developed (see, e.g., \cite{genovese2006exceedance}, \cite{blanchard2020post}, and \cite{katsevich2020simultaneous}). However, Goeman et al. \cite{goeman2021only} showed that these procedures, as well as other procedures targeting control of tail probabilities of the FDP, such as FWER-controlling procedures, are either equivalent to a closed testing procedure, or can be uniformly improved by one. The properties of the closed testing procedure rely on the properties of the local tests used for testing the intersection hypotheses: the procedure is valid for FDP control if all its local tests are valid, and the more powerful the local tests are, the more powerful is the FDP-controlling procedure. As noted in Section \ref{MT}, if the family of hypotheses of interest is $\{H_0^{u_g/n_g}, \,\,g=1, \ldots, G\},$ then closed testing may receive as input the partial conjunction $p$-values for these hypotheses, and test the intersections of these hypotheses. This approach can be used for control of FWER or FDP on the partial conjunction hypotheses, or for obtaining lower bounds $r_g\in \{0,u_g\}$ satisfying (\ref{gen}), as long as the local tests for testing intersections of partial conjunction hypotheses are valid. While full closed testing applied on all the elementary hypotheses $H_1, \ldots, H
_M$ can also be used for these purposes (because $r_g\in \{0,u_g\}$ satisfying (\ref{gen}) may be obtained from $d^M(S)$ satisfying (\ref{lower}), as discussed above), the approach of applying closed testing directly on the partial conjunction hypotheses may be more powerful, because of restricting attention only to a certain subset of intersections of the elementary hypotheses. Corollary \ref{cor-sim} shows three cases in which Simes' test is valid for testing an intersection of partial conjunction hypotheses, yielding validity of closed testing procedure on partial conjunction $p$-values with Simes' local tests. For example, under arbitrary dependence within each group and positive dependence across the groups in the sense of item D4 in Section \ref{setting}, closed testing procedure with Simes' local tests applied on Bonferroni partial conjunction $p$-values is valid for FDP control.  Considering alternative valid approaches which account for arbitrary dependence, such as applying Bonferroni local tests on partial conjunction $p$-values, or applying full closed testing with Bonferroni local tests, would result in less powerful procedures. The results of Corollary \ref{cor-sim} could be useful  even if the goal is FDP control on all subsets of elementary hypotheses, given by (\ref{lower}): one could gain power by replacing certain local tests by more powerful valid tests, possibly applied on global null group $p$-values rather than on elementary $p$-values. 

Similarly to the power of the FDP-controlling procedures, the power of the FDR-controlling procedures  applied on partial conjunction $p$-values strongly depends on the choice of partial conjunction tests. In this work we developed sufficient conditions for FDR control on partial conjunction hypotheses for the cases where their $p$-values are connected to self-consistent multiple testing procedures, or are constructed using Fisher's or Stouffer's methods, showing that in several cases arbitrary dependence adjustments are not needed for FDR control. It may be of interest to address other partial conjunction tests, such as those based on combining methods of Pearson (\cite{pearson1934new}, \cite{owen2009karl}), Tippett (\cite{tippett1931methods}),  Vovk and Wang (\cite{vovk2020combining}), Wilson (\cite{wilson2019harmonic}),  the Higher Criticism method (\cite{DJ04}), or methods based on $e$-values (see \cite{grunwald2020safe}, \cite{vovk2019values}). Obviously, when the partial conjunction $p$-values are combinations of within-group $p$-values, and the $p$-values belonging to different groups are independent or are arbitrarily dependent, the results of \cite{blanchard2008two} can be used for obtaining sufficient conditions for FDR control on partial conjunction hypotheses, 
as long as the dependency structure within the groups is such that the combined $p$-values are valid for partial conjunction testing. The more interesting cases are when there are dependencies across the groups, or dependencies within the studies in the meta-analysis setting, possibly implying complex dependencies among the partial conjunction $p$-values. The arbitrary dependence adjustments may lead to a substantial power loss, therefore searching for realistic dependency structures which do not require these adjustments for certain powerful partial conjunction tests may be an interesting direction for future research. 

One of our results addresses Simes-Storey $p$-value (\ref{PC-Storey}) for the partial conjunction hypothesis, which is based on the minimum adjusted $p$-value of the adaptive BH procedure with Storey's plug-in estimator. In particular, we showed that in the meta-analysis setting, when there is positive dependence within each study in the sense of item M1 in Section \ref{meta-mult}, the (doubly-weighted) BH procedure controls the FDR on the family of partial conjunction hypotheses when each such hypothesis is tested using Simes-Storey $p$-value. In addition, we showed that in this case the procedure for assessing replicability of findings in Section \ref{proc-rep} controls the expected fraction of false replicability claims for the selected features. Similar results were shown for Stouffer's and Fisher's partial conjunction tests. It may be of interest to study the power of Simes-Storey partial conjunction test in comparison to those tests, as well as possibly other tests which are valid under independence.  While there are several papers which compared the power of different methods for global null testing under independence (see, e.g., \cite{loughin2004systematic}, \cite{owen2009karl}), and in a recent work  by \cite{hoang2021combining} several partial conjunction tests were compared under different scenarios, it seems that Simes-Storey partial conjunction test has not been studied and compared to other tests. 
It may be of interest to find the scenarios in which this test is more powerful than others. In addition, following the empirical results of \cite{blan2009}, showing that the adaptive BH procedure incorporating Storey's plug-in estimator with $\lambda=\alpha$  controls the FDR under positive dependence, one may expect that  the partial conjunction test, based on Simes-Storey $p$-value with this choice of $\lambda,$ is valid under positive dependence. Obtaining this result may give the possibility to replace Simes' method by Simes-Storey method for combining positively dependent $p$-values. The latter method is expected to be more powerful than the former for the groups which contain a large proportion of false null hypotheses, so investigation of the validity of Simes-Storey partial conjunction test under positive dependence is also an interesting direction for future research. It may  be also worth to investigate the performance of partial conjunction tests based on  adaptive FDR-controlling procedures incorporating other plug-in estimators satisfying Condition \ref{cond-adapt}, such as those considered in Corollary 13 of \cite{blan2009}. 

When the hypotheses have a group structure, testing the global null 
hypotheses for the groups may be performed for selecting promising groups of hypotheses, which may be only a first step, 
after which the elementary hypotheses within the selected groups are tested. 
The global null hypotheses may be replaced by partial conjunction hypotheses with $u\geq2,$ if one is in search for groups with at least $u$ signals. This two-step approach, with global null testing in the first step, was addressed by \cite{BB14}, and  can be viewed as hierarchical testing of a tree, which was introduced in the context of FDR control by  \cite{YetN06}, and was further developed by \cite{Y08}. These papers considered a general tree of hypotheses, where each hypothesis is associated with a single parent hypothesis, except for the hypotheses at the first level of the tree. In our setting, each group of elementary hypotheses may be associated with a parent, its partial conjunction hypothesis, which gives us a two-level tree structure, with partial conjunction hypotheses at the first level and elementary hypotheses at the second level. Yekutieli et al. (\cite{YetN06}) considered several types of FDR which may be relevant when addressing a tree of hypotheses, including level-$l$ FDR, which addresses only discoveries at a specific level $l,$ chosen in advance. For the above two-level structure, the results of this work are useful for obtaining sufficient conditions for level-1 FDR control. While \cite{Y08} considered independence of the $p$-values across the entire tree, we consider trees with dependencies within and across the levels. The dependence between the level-1 and level-2 $p$-values follows from the fact that the partial conjunction $p$-values are combinations of within-group elementary $p$-values, and the dependence between level-1 $p$-values is induced by the dependencies within the level-2 $p$-values. Several papers (\cite{RetJ17}, \cite{ramdas2019sequential}, \cite{bogomolov2020hypotheses}, \cite{filtering}) considered general trees where each parent hypothesis is the intersection of its child hypotheses, and their $p$-values are computed either by combining the $p$-values of their child hypotheses, or by combining the $p$-values of all their descendants residing at the finest level of the tree (which are referred to as leaf hypotheses). The lemmas developed in this paper generalize the results of the group-level superuniformity lemma of \cite{RetJ17}, and may be used for extending the theoretical guarantees of the methods developed in the above papers, 
as well as possibly other methods for FDR control on a tree of hypotheses (see, e.g., \cite{LG16} and \cite{Y08}).
Finally, an interesting research direction is generalizing  the results of this work for obtaining sufficient conditions for overall and level-restricted FDR control on a tree of hypotheses with an arbitrary number of levels, addressing various combining methods for obtaining $p$-values for parent  hypotheses, in conjunction with different dependency structures among the elementary $p$-values.

\section*{Acknowledgements}
I am thankful to Yoav Benjamini, Jelle Goeman, Ruth Heller, Eugene Katsevich, Abba Krieger, Oren Louidor, Aaditya Ramdas, Chiara Sabatti, and Aldo Solari for useful discussions. This research was partially supported by the Israel Science Foundation grant no. 1726/19.
\appendix
\renewcommand{\thethm}{A\arabic{thm}}
\setcounter{thm}{0}
\section{Sufficient conditions for FDR control}\label{appendix:suff}
Several results in this paper are proved by showing that the two sufficient conditions for FDR control, given by \cite{blanchard2008two}, are satisfied in the context of multiple testing of partial conjunction hypotheses. For completeness, these sufficient results are given in item 1 of the proposition below. An additional set of similar conditions is given in item 2 of this proposition, and is also useful for obtaining some results in this paper. 
This section addresses the setting and the definitions  of Section \ref{prel}.
\begin{proposition}\label{SM-suffic}
 Let  $\beta: \mathbb{R}^+\rightarrow \mathbb R^{+}$ be a non-decreasing shape function, and $\alpha$ a positive number. Consider a multiple testing procedure $\mathcal{M}$ with thresholds of form (\ref{non-adapt}), with shape function $\beta$ and target FDR level $\alpha,$ which receives as input the $p$-values $\bm{p}$ and outputs the set of rejected hypotheses $\mathcal{R}(\bm{p}).$  In each of the following two cases, the multiple testing procedure $\mathcal{M}$ guarantees $$\text{FDR}_v\equiv\EE{\frac{\sum_{i\in M_0}v_i\ind(i\in \mathcal{R})}{\sum_{i=1}^mv_i\ind(i\in \mathcal{R})}} \leq \frac{\alpha}{m}\sum_{i\in M_0}w_iv_i\leq \alpha.$$
 \begin{enumerate}
     \item The multiple testing procedure $\mathcal{M}$ is self-consistent with respect to its thresholds, and for each $i\in M_0,$ 
 		the couple $(p_i, |\mathcal{R}|_{\bm v})$ satisfies the dependency control condition with respect to the shape function $\beta.$
 		\item The multiple testing procedure $\mathcal{M}$ is stable and self-consistent with respect to its thresholds, and for each $i\in M_0,$ the couple  $(p_i, |\mathcal{R}^{-i}|_{\bm v})$ satisfies the dependency control condition with respect to the shape function $\beta.$ 
 \end{enumerate}
\end{proposition}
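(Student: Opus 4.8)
The plan is to follow the termwise strategy of \cite{blanchard2008two}: control each contribution to the weighted FDR by combining self-consistency with the dependency control condition. Writing out the numerator as a sum over $M_0$ and using linearity of expectation together with the convention $0/0=0$ of Remark \ref{technical}, I would begin from
\begin{align*}
\text{FDR}_v=\sum_{i\in M_0}v_i\EE{\frac{\ind(i\in\mathcal{R})}{|\mathcal{R}|_{\bm v}}},
\end{align*}
so that the entire argument reduces to proving that each summand is bounded by $\alpha w_i/m$.

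For item 1 the key step is self-consistency with respect to the thresholds (\ref{non-adapt}): the inclusion $\mathcal{R}\subseteq L_{\Delta}(|\mathcal{R}|_{\bm v})$ yields, for every $i$, the pointwise bound $\ind(i\in\mathcal{R})\leq\ind(p_i\leq\Delta(i,|\mathcal{R}|_{\bm v}))=\ind(p_i\leq\alpha w_i\beta(|\mathcal{R}|_{\bm v})/m)$. Substituting this into the display and setting $c=\alpha w_i/m$, I would recognize the resulting quantity $\EE{\ind(p_i\leq c\beta(|\mathcal{R}|_{\bm v}))/|\mathcal{R}|_{\bm v}}$ as exactly the left-hand side of the dependency control condition applied to the couple $(p_i,|\mathcal{R}|_{\bm v})$ with this value of $c$. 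Since this couple satisfies the dependency control condition by assumption, each summand is at most $c=\alpha w_i/m$, and summing over $i\in M_0$ gives $\text{FDR}_v\leq(\alpha/m)\sum_{i\in M_0}w_iv_i$; the final inequality $\leq\alpha$ follows from $\sum_{i\in M_0}w_iv_i\leq m$.

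For item 2 the structure is identical, but I would first use stability to change the random variable in the denominator. On the event $\{i\in\mathcal{R}\}$, stability guarantees that perturbing $p_i$ while keeping $i$ rejected leaves $\mathcal{R}$ unchanged, so that $\mathcal{R}=\mathcal{R}^{-i}(\bm{p}^{-i})$ and in particular $|\mathcal{R}|_{\bm v}=|\mathcal{R}^{-i}|_{\bm v}$ there. Combined with self-consistency, this gives on $\{i\in\mathcal{R}\}$ the inequality $p_i\leq\alpha w_i\beta(|\mathcal{R}^{-i}|_{\bm v})/m$, hence the pointwise bound $\ind(i\in\mathcal{R})\leq\ind(p_i\leq\alpha w_i\beta(|\mathcal{R}^{-i}|_{\bm v})/m)$ together with the identity $\ind(i\in\mathcal{R})/|\mathcal{R}|_{\bm v}=\ind(i\in\mathcal{R})/|\mathcal{R}^{-i}|_{\bm v}$. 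The concluding step is again the dependency control condition, now invoked for the couple $(p_i,|\mathcal{R}^{-i}|_{\bm v})$ with $c=\alpha w_i/m$, producing the same termwise bound and the same conclusion after summation.

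I expect the only delicate point to be the bookkeeping in item 2: one must check that the replacement of $|\mathcal{R}|_{\bm v}$ by $|\mathcal{R}^{-i}|_{\bm v}$ is justified precisely on the event $\{i\in\mathcal{R}\}$ on which the summand is nonzero, and that the degenerate case $\mathcal{R}=\emptyset$ (where $|\mathcal{R}|_{\bm v}=0$) contributes nothing by virtue of the $0/0=0$ convention, so that nothing is lost when passing to $|\mathcal{R}^{-i}|_{\bm v}$. Everything else is a direct transcription of the self-consistency/dependency-control mechanism of \cite{blanchard2008two}.
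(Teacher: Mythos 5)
Your proof is correct and follows essentially the same argument as the paper: item 1 is exactly the termwise self-consistency/dependency-control mechanism of Proposition 2.7 of \cite{blanchard2008two} (which the paper simply cites), and your item 2 — using stability to replace $|\mathcal{R}|_{\bm v}$ by $|\mathcal{R}^{-i}|_{\bm v}$ on the event $\{i\in\mathcal{R}\}$ before invoking the dependency control condition with $c=\alpha w_i/m$ — is precisely the chain of equalities and inequalities in the paper's proof of that item.
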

Item 1 was proved by \cite{blanchard2008two}, see their Proposition 2.7. The proof of item 2 is similar.
\begin{proof}[Proofs of item 2 of Proposition \ref{SM-suffic}]
Assume the multiple testing procedure is stable and self-consistent with respect to thresholds of form (\ref{non-adapt}) with a certain shape function $\beta,$ and the couple $(p_i, |\mathcal{R}^{-i}|_{\bm v})$ satisfies the dependency control condition with respect to the same shape function $\beta.$ Then, for this   multiple testing procedure,
\begin{align}
\text{FDR}_v&=\EE{\frac{\sum_{i \in M_0}v_i\ind(i\in \mathcal{R})}{\sum_{i=1}^mv_i\ind(i\in \mathcal{R})}}=\sum_{i \in M_0}v_i\EE{\frac{\ind(i\in \mathcal{R})}{|\mathcal{R}|_{\bm v}}}\notag\\&=
\sum_{i \in M_0}v_i\EE{\frac{\ind\{i\in \mathcal{R}, p_i\leq w_i\beta(|\mathcal{R}|_{\bm v})\alpha/m\}}{|\mathcal{R}|_{\bm v}}}\label{cons}\\&=
\sum_{i \in M_0}v_i\EE{\frac{\ind\{i\in \mathcal{R}, p_i\leq w_i\beta(|\mathcal{R}^{-i}|_{\bm v})\alpha/m\}}{|\mathcal{R}^{-i}|_{\bm v}}}\label{stable}\\&\leq \sum_{i \in M_0}v_i\EE{\frac{\ind\{p_i\leq w_i\beta(|\mathcal{R}^{-i}|_{\bm v})\alpha/m\}}{|\mathcal{R}^{-i}|_{\bm v}}}.\label{last-suffic}
\end{align} 
where the equality in (\ref{cons}) follows from the fact that the multiple testing procedure is self-consistent with respect to thresholds of form (\ref{non-adapt}), i.e. $\mathcal{R}\subseteq \{i: p_i\leq w_i\beta(|\mathcal{R}|_v)\alpha/m\}.$ The equality in (\ref{stable}) follows from the fact that the procedure is stable, yielding that if $i\in \mathcal{R},$ then $\mathcal{R}=\mathcal{R}^{-i}.$  Now, using the fact that for each $i\in M_0,$ the pair $(p_i, |\mathcal{R}^{-i}|_{\bm v})$ satisfies the dependency control condition with respect to shape function $\beta,$ we obtain for each $i\in M_0,$
$$\EE{\frac{\ind\{p_i\leq w_i\beta(|\mathcal{R}^{-i}|_{\bm v})\alpha/m\}}{|\mathcal{R}^{-i}|_{\bm v}}}\leq \frac{w_i\alpha}{m}.$$  Combining this inequality with (\ref{last-suffic}), we obtain
$$\text{FDR}_v\leq \frac{\alpha}{m} \sum_{i \in M_0}v_iw_i.$$
\end{proof}
\section{Proofs}
\subsection{Proof of Proposition \ref{main-FDR}}
\textbf{Proof of item 1.1.} It is assumed that for each group $g=1, \ldots, G,$ the partial conjunction $p$-value $P_g^{u_g/n_g}$ is a combination of $p$-values within group $g.$ In addition, it is assumed that the p-values in each group are independent of the p-values in any other
group. Combining these two facts, we obtain that the partial conjunction $p$-values $P^{u_g/n_g}, g=1, \ldots, G$ are independent. Now, since  the multiple testing procedure is assumed to be non-increasing, Proposition 3.3 of \cite{blanchard2008two} yields that  for each group $g$ with a true null hypothesis $H_0^{u_g/n_g},$ the pair $(P^{u_g/n_g}, |\mathcal{R}|_{\bm v})$ satisfies the dependency control with identity shape function $\beta(x)=x$. Now, since the multiple testing procedure is self-consistent with respect to thresholds of form (\ref{non-adapt}), we obtain that the two sufficient conditions in Proposition 2.7 of \cite{blanchard2008two} are satisfied, and the result follows.
\\\\
\textbf{Proof of item 1.2.} As noted in the proof of item 1.1, since there is independence across the groups, the partial conjunction $p$-values are independent. Therefore, all the assumptions of Theorem 11 of \cite{blan2009} hold, and the result follows.
\\\\\textbf{Proofs of items 2 and 3.} The result of item 2 follows from Propositions 3.6 and 2.7 of \cite{blanchard2008two}. The result of item 3 follows from Propositions 3.7 and 2.7 of \cite{blanchard2008two}.

\subsection{Proof of Theorem \ref{min-adj-non-adapt}}
\textbf{Proof of item 1.} Let $|\mathcal{R}(\bm{p})|_{\bm v}$ be the  weighted number of rejected hypotheses when the given multiple testing procedure $\mathcal{M}$ is applied on $P_g^{u_g/n_g}, g=1, \ldots, G.$ Since it is assumed that for each group $g,$ the procedure $\mathcal{M}_g$ satisfies the monotonicity property given in Remark \ref{monotone}, $P_g^{u_g/n_g}$ is non-decreasing in each coordinate of $\bm{p}$. In addition, it is assumed that the procedure $\mathcal{M}$ is non-increasing. Using these facts we obtain that $|\mathcal{R}(\bm{p})|_{\bm v}$ is non-increasing in each coordinate of $\bm{p}.$ Therefore, using item 1 of Lemma \ref{ov-PRDS1} we obtain that for each group $g$ such that $H_0^{u_g/n_g}$ is a true null hypothesis, the pair $(P_g^{u_g/n_g}, |\mathcal{R}(\bm{p})|_{\bm v})$ satisfies the dependency control condition with the identity shape function $\beta(x)=x.$ Since it is assumed that $\mathcal{M}$ is self-consistent with respect to thresholds of form (\ref{non-adapt}) with $\beta(x)=x,$, we obtain that the two sufficient conditions of Proposition 2.7 of \cite{blanchard2008two}, given in item 1 of Proposition \ref{SM-suffic}, are satisfied, and the result follows.
\\\\\textbf{Proof of item 2.}
 Consider the rejection set $\mathcal{R}\subseteq\{1, \ldots, G\}$  of the multiple testing procedure $\mathcal{M}$ when it receives as input the partial conjunction $p$-values $P_g^{u_g/n_g}, \,g=1, \ldots, G.$ Since $\mathcal{M}$ is stable, one may define for each $g\in \{1, \ldots, G\}$ the set $\mathcal{R}^{-g}$ 
 as the rejection set which is obtained when all the partial conjunction $p$-values except $P_g^{u_g/n_g}$ are fixed, and $P_g^{u_g/n_g}$ has any value such that $H_0^{u_g/n_g}$ is rejected. Since each partial conjunction $p$-value $P_g^{u_g/n_g}$ is a combination of $p$-values in group $g,$ and $\mathcal{R}^{-g}$ is a function of all the partial conjunction $p$-values except $P_g^{u_g/n_g},$ we obtain that $\mathcal{R}^{-g}$ is a function of all the elementary $p$-values which do not belong to group $g,$ i.e. $\mathcal{R}^{-g}=\mathcal{R}^{-g}(\bm p^{-g}).$ 
 Since $\mathcal{M}$ is concordant, 
 for each $g\in \{1, \ldots, G\},$ $|\mathcal{R}^{-g}|_{\bm v}$ is non-increasing in each partial conjunction $p$-value in the set $\{P_k^{u_k/n_k}, \,k\neq g\}.$  Each partial conjunction $p$-value is non-decreasing in each coordinate of $\bm{p},$ as shown in the proof of item 1 of Theorem \ref{min-adj-non-adapt}. Therefore, for each $g\in\{1, \ldots, G\},$
 $|\mathcal{R}^{-g}|_{\bm v}$ is non-increasing in each coordinate of $\bm{p}^{-g}.$ Hence, according to item 2 of Lemma \ref{ov-PRDS1}, for each group $g\in \mathcal{G}_0,$ the pair $(P_g^{u_g/n_g}, |\mathcal{R}^{-g}|_{\bm v})$ satisfies the dependency control condition with the identity shape function $\beta(x)=x.$ In addition, the multiple testing procedure $\mathcal{M}$ is self-consistent with respect to thresholds of form (\ref{non-adapt}) with $\beta(x)=x.$ Therefore, the conditions of item 2  of Proposition \ref{SM-suffic} are satisfied, and the result follows.
  \\\\\textbf{Proof of item 3.}
 The proof is similar to the proof of item 2 above. As shown in that proof, for each $g\in\{1, \ldots, G\},$ $|\mathcal{R}^{-g}|_{\bm v}$ is non-increasing in each coordinate of $\bm{p}^{-g}.$ Hence, according to item 3 of Lemma \ref{ov-PRDS1}, for each group $g\in \mathcal{G}_0,$ the pair $(P_g^{u_g/n_g}, |\mathcal{R}^{-g}|_{\bm v})$ satisfies the dependency control condition with the identity shape function $\beta(x)=x.$ Now the result follows from item 2 of Proposition \ref{SM-suffic}.
 
 \subsection{Proof of Corollary \ref{cor}}\label{proofs:corBH}
 The corollary follows from Theorem \ref{min-adj-non-adapt}. Let us first show that any step-up procedure with thresholds of form (\ref{non-adapt}), in particular the doubly-weighted BH procedure, is self-consistent, non-increasing, stable, and concordant.  Blanchard and Roquain (\cite{blanchard2008two}) showed that a step-up procedure with thresholds $\Delta(i,r)$ of form (\ref{non-adapt}) rejects the hypotheses in the set $L_{\Delta}(\hat{r}),$ where $\hat{r}=\max\{r: |L_{\Delta}(i, r)|_{\bm v}\geq r\},$ and $|L_{\Delta}(\hat{r})|_{\bm v}=\hat{r},$ therefore, it  satisfies the self-consistency condition (\ref{SC}) with equality.  Obviously, for each $r\geq 0,$ $|L_{\Delta}(i, r)|_{\bm v}$ is non-increasing in each $p$-value, therefore $\hat{r}=|L_{\Delta}(\hat{r})|_{\bm v}$ is non-increasing in each $p$-value. 
 It remains to show stability and concordance. 
 Let us fix $i\in\{1, \ldots, m\}.$ For each $r\geq 0,$ let
$$ L^{(i)}_{\Delta}(r)=\{j\neq i: p_{j}\leq \Delta(i, r)\}.$$
 Define 
 $$\hat{r}^{(i)}=\max\{r: |L^{(i)}_{\Delta}(r)|_{\bm v}\geq r-v_i\}.$$
 Note that $\hat{r}^{(i)}$ is a function of $\bm p^{-i}.$ Let us show that if $i\in \mathcal{R},$ then $\hat{r}=\hat{r}^{(i)}.$
 Assume that $i\in \mathcal{R}.$ Then $p_i\leq \Delta(i, \hat{r}),$
 where $\hat{r}=\max\{r: |L_{\Delta}(r)|_{\bm v}\geq r\}.$ For any $r\geq \hat{r},$ $p_i\leq \Delta(i, r),$
which yields that $|L_{\Delta}(r)|_{\bm v}=v_i+ |L^{(i)}_{\Delta}(r)|_{\bm v}.$ Therefore, according to the definition of $\hat{r},$ $|L^{(i)}_{\Delta}(\hat{r})|_{\bm v}\geq \hat{r}-v_i,$ and for any $r> \hat{r},$ $|L^{(i)}_{\Delta}(r)|_{\bm v}<r-v_i.$ Thus we obtain that $\hat{r}=\hat{r}^{(i)}=\max\{r: |L^{(i)}_{\Delta}(r)|_{\bm v}\geq r-v_i\}.$ Therefore, fixing $\bm p^{-i}$ and changing $p_i$ as long as $i\in \mathcal{R}$ will not change the set $\mathcal{R},$ which will be equal to $\mathcal{R}^{-i}(\bm p^{-i})=\{i\}\cup L_{\Delta}^{(i)}(\hat{r}^{(i)}).$ Note that for each $r\geq 0,$ $|L^{(i)}_{\Delta}(r)|_{\bm v}$ is non-increasing in each coordinate of $\bm p^{-i}$, therefore $\hat{r}^{(i)}$ is non-increasing in each coordinate of $\bm p^{-i}.$ Since $|L^{(i)}_{\Delta}(r)|_{\bm v}$ is non-decreasing in $r,$ we obtain that $|L_{\Delta}^{(i)}(\hat{r}^{(i)})|_{\bm v}$ is non-increasing in each coordinate of $\bm p^{-i}.$ Therefore, this is also true for $|\mathcal{R}^{-i}(\bm p^{-i})|_{\bm v}=v_i+|L_{\Delta}^{(i)}(\hat{r}^{(i)})|_{\bm v}.$ Thus we have shown that any step-up procedure with thresholds of form (\ref{non-adapt}) is stable and concordant.

Now, it is enough to show that the partial conjunction $p$-values given in each item of Corollary \ref{cor} satisfy the conditions of the corresponding item of Theorem \ref{min-adj-non-adapt}. For item 1, let us consider the partial conjunction $p$-values based on Simes', Hommel's and Bonferroni's methods.   The partial conjunction $p$-value based on Simes' method is connected to the BH procedure in the sense of (\ref{PCM}), because Simes' global null $p$-value is the minimum adjusted $p$-value of the BH procedure. The BH procedure is non-increasing and is self-consistent with respect to thresholds of form (\ref{non-adapt}) with prior and penalty weights equal to unity and identity shape function, therefore Simes' partial conjunction $p$-value satisfies condition (c) of item 1 of Theorem \ref{min-adj-non-adapt}. The global null $p$-values based on Hommel's and Bonferroni's methods (given in (\ref{Hommel}) and (\ref{Bonf}), respectively), are also connected in the sense of (\ref{PCM}) to non-increasing procedures (the BY and Bonferroni procedures, respectively).  Since these procedures always reject a subset of the rejection set of the BH procedure, they satisfy the self-consistency property given in (\ref{SC}) with respect to thresholds of form (\ref{non-adapt}) with prior and penalty weights equal to unity and identity shape function. Therefore,
the global null $p$-values based on Hommel's and Bonferroni's methods satisfy condition (c) of item 1 of Theorem \ref{min-adj-non-adapt}. 
 This completes the proof of item 1 of Corollary \ref{cor}. For proving item 2 of Corollary \ref{cor}, we have to consider the partial conjunction $p$-value (\ref{PC-Storey}), which is connected in the sense of (\ref{PCM}) to the adaptive BH procedure with Storey's estimator for the proportion of nulls. This procedure is self-consistent with respect to thresholds of form (\ref{adapt1}), and Storey's estimator for the proportion of nulls satisfies Condition \ref{cond-adapt} (see Section \ref{prel}). This completes the proof of item 2. Item 3 assumes that the partial conjunction $p$-values are based on Bonferroni's method, as well item 3 of Theorem \ref{min-adj-non-adapt}. Therefore, the proof of Corollary \ref{cor} is complete. 
 \subsection{Proof of Theorem \ref{thm-meta}}
 
 The proof of item 1 is based on the following lemma.
 \begin{lem}\label{lemma-prob}
 Assume the meta-analysis setting of Section \ref{setting-metaanalysis}, where the hypotheses are arranged in a matrix with $m$ rows and $n$ columns. Assume that for each $j\in\{1, \ldots, n\},$  $\bm p_{\cdot j}$ is independent of $\bm p_{\cdot(-j)},$ and  $\bm p_{\cdot j}$ satisfies the PRDS property on the subset of true null hypotheses.  Then the following results hold:
 \begin{enumerate}
     \item The vector of all the $n\times m$ $p$-values $\bm p$ satisfies the PRDS property on the subset of true null hypotheses.
     \item When the rows of the matrix define groups of hypotheses, the elementary $p$-values satisfy positive dependence across groups, as defined in item D5 of Section \ref{setting}.
     \item When the rows of the matrix define groups of hypotheses, the elementary $p$-values satisfy conditional positive dependence across groups and independence within each group, as defined in items D4 and D2 of Section \ref{setting}, respectively.
 \end{enumerate}
 \end{lem}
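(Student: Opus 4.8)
The plan is to use the independence across studies to confine every trace of $x$-dependence to a single column, and then invoke the within-study PRDS assumption for that column. Throughout I would fix the true null coordinate under consideration and write it as $p_{gj'}$, so that $H_{gj'}$ is a true null and column $j'$, namely $\bm p_{\cdot j'}$, is PRDS with respect to $p_{gj'}$. The recurring device is an \emph{augmentation} of the target event: since the coordinate being conditioned on, $p_{gj'}$, itself belongs to column $j'$, I keep it inside the event rather than treating it as a separate conditioning slice. Concretely, after fixing or integrating out everything outside column $j'$, each quantity to be controlled reduces to the form $\PP{\bm p_{\cdot j'}\in\widehat D\mid p_{gj'}=x}$ for a set $\widehat D$ that is non-decreasing in the coordinates of column $j'$; the PRDS property of $\bm p_{\cdot j'}$ at the true null coordinate then gives monotonicity in $x$ directly, simultaneously absorbing the growth of the sliced set in $x$ and the shift of the column's conditional law in $x$. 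Checking that $\widehat D$ is non-decreasing is routine, since it inherits this coordinatewise from $D$.

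For item 1 I would split $\bm p$ into column $j'$ and the remaining columns $\bm Z=\bm p_{\cdot(-j')}$, which are independent of $\bm p_{\cdot j'}$ by assumption. Integrating over $\bm Z$ yields $\PP{\bm p\in D\mid p_{i'j'}=x}=\int \PP{\bm p_{\cdot j'}\in\widetilde D_{\bm z}\mid p_{i'j'}=x}\,dF_{\bm Z}(\bm z)$ with $\widetilde D_{\bm z}=\{\bm t:(\bm t,\bm z)\in D\}$ non-decreasing; each integrand is non-decreasing in $x$ by the PRDS of column $j'$, and integration against the fixed nonnegative measure $dF_{\bm Z}$ preserves monotonicity, giving overall PRDS.

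Items 2 and 3 follow the same pattern with more bookkeeping. For item 2 (item D5) the vector $(\bm p^{-g},p_{gj'})$ contains all of column $j'$ (its off-row entries together with $p_{gj'}$) plus only the off-row entries of the other columns; the latter are independent of column $j'$, so integrating them out reduces the conditional probability to the augmented form above and PRDS of column $j'$ finishes it. For item 3, the independence within each group (item D2) is immediate: the hypothesis that each column is independent of all the others jointly forces the columns to be mutually independent by a short induction, and the $n$ entries of a given row lie in distinct columns, hence are independent. For conditional positive dependence across groups (item D4) I would condition additionally on $\bm p_g^{-i}=\bm y$; the key point is that $\bm y$ consists of row-$g$ entries lying in columns other than $j'$, so by column independence the conditional law of column $j'$ given $(p_{gj'}=x,\bm p_g^{-i}=\bm y)$ depends on $x$ only, while the conditional law of the remaining off-row entries does not depend on $x$ at all; integrating out the latter again reduces the claim to a monotone statement about column $j'$.

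The main obstacle I anticipate is the careful partitioning demanded by item 3 (D4): one must verify that conditioning on the fixed values $\bm y$, which are spread across several columns, leaves all $x$-dependence inside column $j'$, and one must set up the augmented non-decreasing set so that $\bm y$ and the integrated off-column entries enter only as inert parameters. Once $\widehat D$ is correctly defined and shown to be non-decreasing in the column-$j'$ coordinates, the remaining steps are straightforward applications of the within-study PRDS hypothesis together with Fubini.
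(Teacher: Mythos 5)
Your proof is correct, and for items 1 and 2 it is essentially the paper's proof: the paper isolates exactly your argument as an auxiliary lemma (Lemma \ref{prob}: stacking an independent block onto a vector that is PRDS on a subset preserves PRDS on that subset), proves it by the same slice-plus-Fubini computation you describe, and applies it with $\bm p_1=\bm p_{\cdot j}$ and $\bm p_2$ equal to the remaining columns (item 1), respectively $\bm p_2=\bm p_{(-i)(-j)}$ (item 2). The genuine difference is in item 3 (property D4). The paper reduces D4 to the already-proved D5: it writes the doubly-conditioned probability as a ratio whose denominator is the conditional density $f_{\bm p_{i(-j)}\mid p_{ij}=x}(\bm y)$, constant in $x$ by independence across studies, and whose numerator it identifies with a D5-type probability of a non-decreasing slice $\tilde D$. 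You never invoke item 2; instead you factorize the conditional law directly: given $p_{gj'}=x$ and $\bm p_g^{-i}=\bm y$, the off-row part of column $j'$ has law depending on $x$ only, the off-row off-column block has law depending on $\bm y$ only, the two blocks are conditionally independent, and then column PRDS applied to the augmented slices plus integration finishes the argument. Your route is slightly longer but more self-contained, and it is more careful precisely where the paper's write-up is loose: read literally, the numerator in the paper's ratio identity integrates the off-row off-column block $\bm p_{(-i)(-j)}$ against its unconditional law, whereas under the double conditioning that block's law depends on $\bm y$ (those entries share columns with $\bm p_{i(-j)}$); your explicit factorization makes transparent why the conclusion holds regardless, namely that this block's conditional law never involves $x$, which is all that monotonicity in $x$ requires.
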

 The proof of this lemma is given in the end of this section. The results of this lemma show that under independence across the studies and positive dependence within each study, as defined in item M1 in Section \ref{meta-mult}, the $p$-values satisfy the dependency structure assumed in items 1, 2, and 3 of Theorem \ref{min-adj-non-adapt}. The other assumptions of items 1 and 2 of Theorem \ref{min-adj-non-adapt} are assumed to be satisfied, therefore the result of item 1 of Theorem \ref{thm-meta} follows from items 1 and 2 of Theorem \ref{min-adj-non-adapt}, when we address the hypotheses within each row of the matrix as a group.
 
 Let us now prove item 2 of Theorem \ref{thm-meta}.
According to Lemma \ref{lemma-meta}, for each $i\in\{1, \ldots,m\}$ such that $H_i^{u/n}$ is true, the pair $(P_i^{u/n}, f(\bm p))$ satisfies the dependency control condition with respect to the identity shape function $\beta(x)=x,$ for any non-increasing function $f: [0,1]^M\rightarrow [0, \infty).$ Since the multiple testing procedure $\mathcal{M}$ is assumed to be non-increasing, and both Stouffer's and Fisher's partial conjunction $p$-values are non-decreasing in each elementary $p$-value, we obtain that  for each $i\in\{1, \ldots,m\}$ such that $H_i^{u/n}$ is true, the pair $(P_i^{u/n}, |\mathcal{R}(\bm p)|_{\bm v})$ satisfies the dependency control condition with respect to the identity shape function $\beta(x)=x.$ Since $\mathcal{M}$ is assumed to be self-consistent with respect to thresholds of form (\ref{non-adapt}) with identity shape function $\beta(x)=x,$
we obtain that the two sufficient conditions for FDR control, given in Proposition 2.7 of \cite{blanchard2008two} (and item 1 of Proposition \ref{SM-suffic}), are satisfied, which completes the proof.

 \begin{proof}[Proof of Lemma \ref{lemma-prob}]
Items 1 and 2 of Lemma \ref{lemma-prob} are based on the following auxiliary lemma.
\begin{lem}\label{prob}
Let $\bm p_1\in[0,1]^{n_1}$ and $\bm p_2\in[0,1]^{n_2}$ be two independent vectors, and assume that $\bm p_1$ satisfies the PRDS property on the subset $I_0\subseteq \{1, \ldots, n_1\}.$ Then  the vector obtained by stacking $\bm p_1$ and $\bm p_2,$ denoted by  $(\bm p_1, \bm p_2),$ satisfies the PRDS property on the subset $I_0.$ 
\end{lem}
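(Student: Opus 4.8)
The plan is to verify the PRDS property of the stacked vector $(\bm p_1, \bm p_2)$ directly from its definition, reducing it to the assumed PRDS property of $\bm p_1$ by conditioning on $\bm p_2$ and exploiting independence. Fix a coordinate $i \in I_0$ and an arbitrary non-decreasing set $D \subseteq [0,1]^{n_1+n_2}$; the goal is to show that $x \mapsto \PPst{(\bm p_1, \bm p_2) \in D}{p_{1,i} = x}$ is non-decreasing, where $p_{1,i}$ denotes the $i$-th coordinate of $\bm p_1$.

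First I would slice the set $D$ along the second block of coordinates. For each $\bm y_2 \in [0,1]^{n_2}$, define $D_{\bm y_2} = \{\bm y_1 \in [0,1]^{n_1} : (\bm y_1, \bm y_2) \in D\}$. A short check shows that each slice $D_{\bm y_2}$ is itself a non-decreasing set in $[0,1]^{n_1}$: if $\bm y_1 \in D_{\bm y_2}$ and $\bm y_1 \le \bm y_1'$, then $(\bm y_1, \bm y_2) \le (\bm y_1', \bm y_2)$, so monotonicity of $D$ forces $(\bm y_1', \bm y_2) \in D$, i.e.\ $\bm y_1' \in D_{\bm y_2}$.

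Next I would use independence to factor the conditional law. Since $\bm p_1$ and $\bm p_2$ are independent, conditioning on the event $\{p_{1,i} = x\}$, which concerns $\bm p_1$ only, leaves $\bm p_2$ with its marginal distribution and keeps it independent of $\bm p_1$. Consequently,
\[
\PPst{(\bm p_1, \bm p_2) \in D}{p_{1,i} = x} = \int \PPst{\bm p_1 \in D_{\bm y_2}}{p_{1,i} = x}\, dP_{\bm p_2}(\bm y_2),
\]
where $P_{\bm p_2}$ is the law of $\bm p_2$ and does not depend on $x$. For each fixed $\bm y_2$, the slice $D_{\bm y_2}$ is a non-decreasing set and $i \in I_0$, so the PRDS assumption on $\bm p_1$ yields that $x \mapsto \PPst{\bm p_1 \in D_{\bm y_2}}{p_{1,i} = x}$ is non-decreasing. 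Integrating this family of non-decreasing functions against the fixed measure $P_{\bm p_2}$ preserves monotonicity, so the left-hand side is non-decreasing in $x$, which is exactly the PRDS property of $(\bm p_1, \bm p_2)$ on $I_0$.

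I expect the only delicate point to be the rigorous justification of the factorization of the conditional distribution, namely that $\bm p_2$ remains independent of $\bm p_1$ and retains its marginal law after conditioning on $p_{1,i} = x$, together with the regular-conditional-probability argument that makes the displayed integral representation precise. The slice argument and the monotonicity-under-integration step are routine.
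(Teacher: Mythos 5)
Your proposal is correct and follows essentially the same route as the paper: slice the non-decreasing set $D$ along the $\bm p_2$ coordinates, use independence to write the conditional probability as an integral of the slice probabilities against the (fixed) law of $\bm p_2$, apply the PRDS assumption on $\bm p_1$ to each non-decreasing slice, and conclude by monotonicity under integration. The only cosmetic difference is that you integrate against the general law $P_{\bm p_2}$ whereas the paper writes the integral with a density $f_{\bm p_2}$, a slightly less general formulation of the same step.
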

Let us first show how item 1 of Lemma \ref{lemma-prob} follows from Lemma \ref{prob}. Let $H_{ij}$ be a true null hypothesis, and let $\bm p$ be the vector of all the $m\times n$ $p$-values. 
Note that $\bm p$ can be partitioned into two vectors: $\bm p_{\cdot j},$ the vector of $p$-values in study $j,$ and $\bm p_{\cdot (-j)},$ the vector of $p$-values for the hypotheses in all the studies except study $j.$ 
Since $\bm p_{\cdot j}$ is PRDS on the subset of true null hypotheses in study $j,$ and $\bm p_{\cdot j}$ is independent of $\bm p_{\cdot (-j)},$ (because of independence across studies), we obtain based on Lemma \ref{prob} that $\bm p$ satisfies the PRDS property on the subset of indices corresponding to true null hypotheses in study $j.$ Therefore, for any non-decreasing set $D$ in $m\times n,$ $\PP{\bm p\in D\,|\,p_{ij}=x}$ is non-decreasing in $x.$
Thus we have proved item 1 of Lemma \ref{lemma-prob}. Item 2 of Lemma \ref{lemma-prob} follows similarly. Since the groups are defined by rows, independence within each group follows from the independence across studies: for each $i\in\{1, \ldots, m\},$ the vector $\bm p_{i\cdot}$ contains independent $p$-values.  Let us now show positive dependence across groups. Let $i\in\{1, \ldots, m\}$ be a row (group) which contains at least one true null hypothesis, and let $H_{ij}$ be a true  null hypothesis within this row. Let $D$ be a non-decreasing set in $[0,1]^{mn-m+1}.$ We have to show that $\mathbb{P}\left\{(\bm{p}_{(-i)\cdot}, p_{ij})\in D\,|\,p_{ij}=x \right\}$ is non-decreasing in $x.$ Note that $(\bm{p}_{(-i)\cdot}, p_{ij})=(\bm{p}_{(-i)(-j)}, \bm p_{\cdot j}),$ where $\bm{p}_{(-i)(-j)}$ is the vector obtained by stacking the rows of matrix $\mathbf{p}$ after removing its row $i$ and column $j.$ According to our assumptions, $\bm{p}_{(-i)(-j)}$ is independent of $\bm p_{\cdot j},$ while  $\bm p_{\cdot j}$ is PRDS on the subset of true null hypotheses in study $j.$ Therefore, according to Lemma \ref{prob}, $(\bm{p}_{(-i)(-j)}, \bm p_{\cdot j})=(\bm{p}_{(-i)\cdot}, p_{ij})$ is PRDS on the subset of true null hypotheses in study $j,$ which yields that 
$\mathbb{P}\left\{(\bm{p}_{(-i)\cdot}, p_{ij})\in D\,|\,p_{ij}=x \right\}$ is non-decreasing in $x.$ This completes the proof of item 2 of Lemma \ref{lemma-prob}.

Let us now prove item 3 of Lemma \ref{lemma-prob}. 
Let $\bm y\in[0,1]^{n-1},$ and let $D\in[0,1]^{mn}$ be a non-decreasing set. Let $H_{ij}$ be a true null hypothesis, and let $\bm p_{i(-j)}$ be the vector of $p$-values for the hypotheses in $\{H_{ik}, k\neq j\}.$ Finally, let 
$(\bm p_{(-i)\cdot}, p_{ij}, \bm y)$ be the vector $\bm p$ such that the sub-vector corresponding to the hypotheses not belonging to row $i$ is given by $\bm p_{(-i)\cdot},$ the $p$-value for $H_{ij}$ is $p_{ij},$ and the sub-vector of $p$-values for the remaining hypotheses, belonging to the set $\{H_{ik}, k\neq j\},$ is $\bm y.$
Note that
\begin{align}&\PP{\bm p\in D\,|\,p_{ij}=x, \bm p_{i(-j)}=\bm y }=\frac{\PP{(\bm  p_{(-i)\cdot}, p_{ij})\in \tilde{D}\,|\,p_{ij}=x}}{f_{\bm p_{i(-j)}|p_{ij}=x}(\bm y)},\label{med2}
\end{align}
where 
$$\tilde{D}=\left\{(\bm p_{i(-j)}, p_{ij}): (\bm p_{i(-j)}, p_{ij}, \bm y)\in D\right\},$$ 
and $f_{\bm p_{i(-j)}|p_{ij}=x}$ is the conditional density of $\bm p_{i(-j)}$  given $p_{ij}=x.$ Since the $p$-values for the hypotheses in row $i$ are independent, this conditional density is the density of $\bm p_{i(-j)}.$ Therefore, the denominator in (\ref{med2}) does not depend on $x.$
Since $D$ is a non-decreasing set, $\tilde{D}$ is also a non-decreasing set. According to item 2, the $p$-values satisfy positive dependence across groups, therefore the nominator of (\ref{med2}) is non-decreasing in $x.$ We obtain that the fraction in (\ref{med2}) is non-decreasing in $x.$ Thus we have proved conditional positive dependence across groups. The independence within each group, i.e. row, follows from the independence across studies.

Let us now prove Lemma \ref{prob}.	Let $D$ be a non-decreasing set in $[0,1]^{n_1+n_2}.$ Let $i\in I_0.$
Since $\bm p_2$ is independent of $\bm p_{1i},$ the conditional distribution of $\bm p_2$ given $p_{1i}$ is identical to the distribution of $\bm p_2.$ Therefore, for any $x\in[0,1],$
\begin{align}
\EE{\ind(\bm p_1, \bm p_2)\in D\,|\,p_{1i}=x}&=\EE{\EE{\ind(\bm p_1, \bm p_2)\in D\,|\,p_{1i}=x, \bm p_2}}\label{prob1}
\end{align}
where the external expectation on the right hand-side of (\ref{prob1})
is with respect to the distribution of $\bm p_2.$  Let $\bm y\in[0,1]^{n_2}.$	Since $\bm p_1$ and $\bm p_2$ are independent, for any $x\in[0,1],$
\begin{align}
\EE{\ind(\bm p_1, \bm p_2)\in D\,|\,p_{1i}=x, \bm p_2=\bm y}=\EE{\ind(\bm p_1, \bm y)\in D\,|\,p_{1i}=x}.\label{prob2}
\end{align}
The set $\{\bm p_1: (\bm p_1, \bm y)\in D\}$ is non-decreasing, because the set $D$ is non-decreasing. Therefore, since $\bm p_1$ is PRDS with respect to $p_{1i},$ $$\EE{\ind(\bm p_1, \bm y)\in D\,|\,p_{1i}=x}$$ is a non-decreasing function of $x.$ Based on (\ref{prob1}) and (\ref{prob2}), we obtain
\begin{align}\EE{\ind(\bm p_1, \bm p_2)\in D|p_{1i}=x}=\int_{\bm p\in[0,1]^{n_2}}\EE{\ind(\bm p_1, \bm p)\in D|p_{1i}=x}f_{\bm p_2}(\bm p)d\bm p,\label{integral}\end{align}
where $f_{\bm p_2}$ is the density function of $\bm p_2.$ As shown above, for each $\bm p\in[0,1]^{n_2},$ $\EE{\ind(\bm p_1, \bm p)\in D\,|\,p_{1i}=x}$ is a non-decreasing function of $x.$ Therefore, based on (\ref{integral}) we obtain that 
$\EE{\ind(\bm p_1, \bm p_2)\in D\,|\,p_{1i}=x}$ is a non-decreasing function of $x,$ which completes the proof.
\end{proof}
\subsection{Proof of Theorem \ref{meta-seq}}
 The proof is based on the following lemma.
\begin{lem}\label{lemma-seq}
The procedure in Section \ref{proc-rep} with shape function $\beta(\cdot)$  guarantees \begin{align}\EE{\frac{\sum_{i\in \mathcal{S}}v_i\ind(\hat{k}(i)>k(i))}{|\mathcal{S}|_{\bm v}}}\leq q\label{result}
\end{align}if either of the following conditions is satisfied:
\begin{enumerate}
    \item 
    For each $i\in\{1, \ldots, n\}$ such that $k(i)<n,$ the pair $(P_i^{(k(i)+1)/n}, |\mathcal{S}|_{\bm v})$ satisfies the dependency control condition with shape function $\beta.$
    \item The selection rule $\mathcal{S}$ is stable (as defined in Condition C2), and for each $i\in\{1, \ldots, n\}$ such that $k(i)<n,$ the pair $(P_i^{(k(i)+1)/n}, |\mathcal{S}^{(-i)\cdot}|_{\bm v})$ satisfies the dependency control condition with shape function $\beta.$
\end{enumerate}
\end{lem}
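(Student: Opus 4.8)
The plan is to reduce the event of a false replicability claim for a feature $i$ to a single threshold-crossing event involving a $p$-value for a \emph{true} partial conjunction null, and then to mirror the two arguments in the proof of Proposition \ref{SM-suffic}, with the selected set $\mathcal{S}$ playing the role that the rejection set $\mathcal{R}$ plays there.

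The first, and conceptually central, step concerns Step~2 of the procedure. By construction, $\hat{k}(i) \geq u$ holds if and only if $\max\{P_i^{1/n}, \ldots, P_i^{u/n}\} \leq w_i \beta(|\mathcal{S}|_{\bm v}) q/m$, since the inner maximum is non-decreasing in the number of terms. A false claim for feature $i$ means $\hat{k}(i) \geq k(i)+1$, which is possible only when $k(i) < n$, and it forces \emph{all} of $P_i^{1/n}, \ldots, P_i^{(k(i)+1)/n}$ to lie below the threshold. Hence
$$\ind(\hat{k}(i) > k(i)) \leq \ind\!\left(P_i^{(k(i)+1)/n} \leq w_i \beta(|\mathcal{S}|_{\bm v}) q/m\right) \quad\text{when } k(i) < n,$$
and the left-hand indicator vanishes when $k(i) = n$. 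The crucial point is that $H_i^{(k(i)+1)/n}$, which asserts $k(i) < k(i)+1$, is a \emph{true} partial conjunction null, so $P_i^{(k(i)+1)/n}$ is a valid $p$-value and the dependency control hypotheses of the lemma apply to it. This replaces the data-dependent integer $\hat{k}(i)$ by a fixed (given $k(i)$) true-null $p$-value, which is precisely the object the dependency control condition is stated for.

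Next I would bound the error rate directly. Writing $M_{<n} = \{i : k(i) < n\}$ and applying the displayed containment,
\begin{align*}
\EE{\frac{\sum_{i\in \mathcal{S}} v_i\ind(\hat{k}(i)>k(i))}{|\mathcal{S}|_{\bm v}}} \leq \sum_{i\in M_{<n}} v_i\, \EE{\frac{\ind\!\left(i\in \mathcal{S},\, P_i^{(k(i)+1)/n} \leq \tfrac{w_i q}{m}\beta(|\mathcal{S}|_{\bm v})\right)}{|\mathcal{S}|_{\bm v}}}.
\end{align*}
Under condition~1, I would drop the factor $\ind(i\in\mathcal{S})$ (which only enlarges each non-negative summand) and apply the dependency control condition for the pair $(P_i^{(k(i)+1)/n}, |\mathcal{S}|_{\bm v})$ with constant $c = w_i q/m$, bounding the $i$-th term by $w_i q/m$. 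Under condition~2, I would instead use stability of $\mathcal{S}$: on the event $\{i\in\mathcal{S}\}$ one has $\mathcal{S} = \mathcal{S}^{(-i)\cdot}$, hence $|\mathcal{S}|_{\bm v} = |\mathcal{S}^{(-i)\cdot}|_{\bm v}$ there, so each summand equals $\EE{\ind(i\in\mathcal{S})\,\ind(P_i^{(k(i)+1)/n}\leq \tfrac{w_iq}{m}\beta(|\mathcal{S}^{(-i)\cdot}|_{\bm v}))/|\mathcal{S}^{(-i)\cdot}|_{\bm v}}$; dropping $\ind(i\in\mathcal{S})$ and invoking the dependency control condition for $(P_i^{(k(i)+1)/n}, |\mathcal{S}^{(-i)\cdot}|_{\bm v})$ again yields $w_i q/m$. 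Summing over $M_{<n}$ and using the normalization $\sum_{i=1}^m v_i w_i = m$ gives $\sum_{i\in M_{<n}} v_i w_i q/m \leq q$, as required.

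The main obstacle is the random, data-dependent threshold $w_i\beta(|\mathcal{S}|_{\bm v})q/m$. For condition~1 this is absorbed directly into the dependency control condition by reading off $c = w_i q/m$, so the entire content sits in the hypothesis that the pair $(P_i^{(k(i)+1)/n}, |\mathcal{S}|_{\bm v})$ satisfies it. For condition~2 the difficulty is genuine: $|\mathcal{S}|_{\bm v}$ depends on row $i$ of the matrix, on which $P_i^{(k(i)+1)/n}$ also depends, so that pair need not satisfy the dependency control condition; stability is exactly what permits substituting the row-$i$-independent quantity $|\mathcal{S}^{(-i)\cdot}|_{\bm v}$ on the event where it matters, in direct analogy with the passage from line (\ref{cons}) to line (\ref{stable}) in the proof of Proposition \ref{SM-suffic}. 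I would finally take care with the convention $0/0=0$ when $|\mathcal{S}|_{\bm v}$ (or $|\mathcal{S}^{(-i)\cdot}|_{\bm v}$) vanishes; since the dependency control condition is itself phrased as an expectation of $\ind(U\leq c\beta(V))/V$, this degenerate case is already subsumed in its statement and requires no separate treatment.
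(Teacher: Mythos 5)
Your proposal is correct and follows essentially the same route as the paper's proof: restricting to features with $k(i)<n$, bounding the false-claim event $\{i\in\mathcal{S},\,\hat{k}(i)>k(i)\}$ by the threshold-crossing event for the true-null $p$-value $P_i^{(k(i)+1)/n}$, dropping the selection indicator (after substituting $|\mathcal{S}^{(-i)\cdot}|_{\bm v}$ via stability in case 2), and invoking the dependency control condition with $c=w_iq/m$ before summing against the normalization $\sum_{i=1}^m v_iw_i=m$. No gaps.
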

The proof of this lemma is given in the end of this section. Let us first prove item 1(a) of  Theorem \ref{meta-seq}. For each $i\in\{1, \ldots, m\}$ such that $k(i)<n,$ $H_i^{k(i)+1/n}$ is a true null hypothesis, because the number of false null hypotheses in row $i,$  $k(i),$ is smaller than $k(i)+1.$ According to item 1 of Lemma \ref{lemma-meta}, if the assumptions of item 1(a) of Theorem \ref{meta-seq} hold, then for each $i\in\{1, \ldots, m\}$ such that $k(i)<n,$ the pair $(P_i^{(k(i)+1)/n}, |\mathcal{S}|_{\bm v})$ satisfies the dependency control condition with the identity shape function $\beta(x)=x.$  Therefore, according to item 1 of Lemma \ref{lemma-seq}, the sequential procedure in Section \ref{meta-analysis} with identity shape function guarantees (\ref{result}). Thus we have proved item 1(a) of Theorem \ref{meta-seq}. Item 1(b) follows similarly, based on item 2 of Lemma \ref{lemma-meta} and item 2 of Lemma \ref{lemma-seq}. 

Let us now prove item 2 of Theorem \ref{meta-seq}. Under the assumptions of this item, the $p$-values within each row are independent,  and the partial conjunction $p$-values are valid. The $p$-values across the rows are also independent, so for each $i\in\{1, \ldots, m\},$ $P_i^{(k(i)+1)/n}$ is independent of $\bm p_{(-i)\cdot},$ because  $P_i^{(k(i)+1)/n}$ is a combination of $p$-values in row $i.$ Let us show that in this case for each $i\in\{1, \ldots, n\}$ such that $k(i)<n,$ the pair $(P_i^{(k(i)+1)/n}, |\mathcal{S}^{(-i)\cdot}|_{\bm v})$ satisfies the dependency control condition with  the shape function $\beta(x)=x,$ yielding that item 2 of Lemma \ref{lemma-seq} with $\beta(x)=x$ holds. Let $i\in\{1, \ldots, n\}$ such that $k(i)<n.$ Then $P_i^{(k(i)+1)/n}$ is a $p$-value of a true null hypothesis, and it is stochastically lower bounded by a uniform variable on $[0,1].$ Since $P_i^{(k(i)+1)/n}$ is independent of $\bm p_{(-i)\cdot},$ the conditional distribution of $P_i^{(k(i)+1)/n}$ given $\bm p_{(-i)\cdot}$ is the same as its marginal distribution, and therefore is stochastically lower bounded by a uniform distribution. In addition, for a given $\bm p_{(-i)\cdot},$ $\mathcal{S}^{(-i)\cdot}$ is fixed. Hence we obtain for any $c>0$:
\begin{align*}\EE{\frac{\ind(P_i^{(k(i)+1)/n}\leq c|\mathcal{S}^{(-i)\cdot}|_{\bm v})}{|\mathcal{S}^{(-i)\cdot}|_{\bm v}}\,\, | \,\,\bm p_{(-i)\cdot}}\leq c,\end{align*}
therefore\begin{align*}
   & \EE{\frac{\ind(P_i^{(k(i)+1)/n})\leq c|\mathcal{S}^{(-i)\cdot}|_{\bm v}}{|\mathcal{S}^{(-i)\cdot}|_{\bm v}}}=\\&
   \EE{\EE{\frac{\ind(P_i^{(k(i)+1)/n})\leq c|\mathcal{S}^{(-i)\cdot}|_{\bm v}}{|\mathcal{S}^{(-i)\cdot}|_{\bm v}}\,\,|\,\,\bm p_{(-i)\cdot}}}\leq c.
\end{align*}
Thus we have shown that item 2 of Lemma \ref{lemma-seq} with $\beta(x)=x$ holds, therefore the result of item 2 of Theorem \ref{meta-seq} follows. Finally, item 3 of Theorem \ref{meta-seq} follows from item (iii) of Lemma 3.2 of \cite{blanchard2008two}, showing that the condition of item 1 of Lemma \ref{lemma-seq} holds with respect to the shape function $\beta$ of form (\ref{beta}).   
\begin{proof}[Proof of Lemma \ref{lemma-seq}]
Let us first prove item 1 of Lemma \ref{lemma-seq}. Define $I=\{i\in\{1, \ldots, m\}: k(i)<n\}.$ Note that the event $\{i\in \mathcal{S}, \hat{k}(i)>k(i)\}$ is equivalent to the event $\{i\in I\cap \mathcal{S}, \hat{k}(i)>k(i)\},$ because $\hat{k}(i)\leq n$ for each $i\in \mathcal{S}.$ Therefore,
\begin{align}\EE{\frac{\sum_{i\in \mathcal{S}}v_i\ind(\hat{k}(i)>k(i))}{|\mathcal{S}|_{\bm v}}}&=\EE{\frac{\sum_{i\in I}v_i\ind(i\in \mathcal{S}, \hat{k}(i)>k(i))}{|\mathcal{S}|_{\bm v}}}\notag\\&=\sum_{i\in I}v_i\EE{\frac{\ind(i\in \mathcal{S}, \hat{k}(i)>k(i))}{|\mathcal{S}|_{\bm v}}}\label{seq-1}\end{align}
For the sequential procedure with identity shape function, given in Section \ref{proc-rep}, the event $\{i\in \mathcal{S}, \hat{k}(i)>k(i)\}$ implies the event $\{i\in \mathcal{S}, P_i^{(k(i)+1)/n}\leq w_i|\mathcal{S}|_{\bm v}q/m\}.$ Indeed, by the definition of $\hat{k}(i),$ for each selected feature $i$ 
and for each $u\leq \hat{k}(i),$  $P_i^{u/n}\leq  w_i|\mathcal{S}|_{\bm v}q/m.$ If  $\hat{k}(i)>k(i),$ then $P^{(k(i)+1)/n}\leq w_i|\mathcal{S}|_{\bm v}q/m.$ Therefore, continuing from (\ref{seq-1}), we obtain
\begin{align}
&\sum_{i\in I} v_i\EE{\frac{\ind(i\in \mathcal{S}, \hat{k}(i)>k(i))}{|\mathcal{S}|_{\bm v}}}\leq\label{fornext}\\& \sum_{i\in I} v_i\EE{\frac{\ind(i\in \mathcal{S}, P_i^{(k(i)+1)/n}\leq w_i\beta(|\mathcal{S}|_{\bm v})q/m}{|\mathcal{S}|_{\bm v}}}\leq\notag\\&
\sum_{i\in I} v_i\EE{\frac{\ind(P_i^{(k(i)+1)/n}\leq w_i\beta(|\mathcal{S}|_{\bm v})q/m}{|\mathcal{S}|_{\bm v}}}\leq\label{seq-2}\\& \frac{q}{m}\sum_{i=1}^m v_i w_i=q\notag
\end{align} 
The inequality in (\ref{seq-2}) follows from the fact that for each $i\in\{1, \ldots, m\}$ such that $k(i)<n,$ $H_i^{(k(i)+1)/n}$ is a true null hypothesis, and the pair $$(P_i^{(k(i)+1)/n}, |\mathcal{S}|_{\bm v})$$ satisfies the dependency control condition with shape function $\beta.$ This completes the proof of item 1 of Lemma \ref{lemma-seq}. 

Let us prove item 2 of Lemma \ref{lemma-seq}. In this case $\mathcal{S}$ is stable, therefore $\mathcal{S}^{(-i)\cdot}$ is defined, and 
\begin{align*}
&\sum_{i\in I}v_i\EE{\frac{\ind(i\in \mathcal{S}, P_i^{(k(i)+1)/n}\leq w_i\beta(|\mathcal{S}|_{\bm v})q/m}{|\mathcal{S}|_{\bm v}}}=\\&\sum_{i\in I}v_i\EE{\frac{\ind(i\in \mathcal{S}, P_i^{(k(i)+1)/n}\leq w_i\beta(|\mathcal{S}^{(-i)\cdot}|_{\bm v})q/m}{|\mathcal{S}^{(-i)\cdot}|_{\bm v}}}.
\end{align*}
Therefore, using (\ref{seq-1}) and (\ref{fornext}), we obtain
\begin{align}&\EE{\frac{\sum_{i\in \mathcal{S}}v_i\ind(\hat{k}(i)>k(i))}{|\mathcal{S}|_{\bm v}}}\leq\notag\\& \sum_{i\in I}v_i\EE{\frac{\ind(i\in \mathcal{S}, P_i^{(k(i)+1)/n}\leq w_i\beta(|\mathcal{S}|_{\bm v})q/m}{|\mathcal{S}|_{\bm v}}}=\notag\\&
\sum_{i\in I}v_i\EE{\frac{\ind(i\in \mathcal{S}, P_i^{(k(i)+1)/n}\leq w_i\beta(|\mathcal{S}^{(-i)\cdot}|_{\bm v})q/m}{|\mathcal{S}^{(-i)\cdot}|_{\bm v}}}\leq\notag\\&\sum_{i\in I}v_i\EE{\frac{\ind(P_i^{(k(i)+1)/n}\leq w_i\beta(|\mathcal{S}^{(-i)\cdot}|_{\bm v})q/m}{|\mathcal{S}^{(-i)\cdot}|_{\bm v}}}\leq\label{seq-3}\\& \frac{q}{m}\sum_{i=1}^mv_iw_i=q,\notag
\end{align}
where the inequality in (\ref{seq-3}) follows from the fact that for each $i\in\{1, \ldots, m\}$ such that $k(i)<n,$ the pair $(P_i^{(k(i)+1)/n}, |\mathcal{S}^{(-i)}|_{\bm v})$ satisfies the dependency control condition with shape function $\beta.$
\end{proof}
\subsection{Proof of Lemma \ref{ov-PRDS1}}
The proof is based on the following auxiliary lemma.
\begin{lem}\label{help1-u}
Consider the setting of Section \ref{setting}.	Let $g\in \mathcal{G}_0,$ and assume that 
$P^{u_g/n_g}_g$ is of form (\ref{PCM}):  $$P_g^{u_g/n_g}=\max\{P^{1/(n_g-u_g+1)}[A]: A\subseteq A_g, |A|=n_g-u_g+1\},$$
where for each null group $A\subseteq A_g$ such that $|A|=n_g-u_g+1,$ $P^{1/(n_g-u_g+1)}[A]$ is a valid global null $p$-value for group $A.$
Let $U$ be a non-negative random variable, $\beta: \mathbb{R}^+\rightarrow  \mathbb{R}^+$ be a non-decreasing shape function satisfying $\beta(0)=0.$  Assume that for each null group $A\subseteq A_g$ such that $|A|=n_g-u_g+1,$ the pair $(P^{1/(n_g-u_g+1)}[A], \,U)$ satisfies the dependency control condition  with respect to the shape function $\beta.$   Then
	the pair    $(P^{u_g/n_g}_g, \, U)$ satisfies the dependency control condition with respect to the shape function $\beta.$ 
\end{lem}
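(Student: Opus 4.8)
The plan is to reduce the dependency control condition for the maximum $P_g^{u_g/n_g}$ to the corresponding condition for a single, carefully chosen all-null subgroup, exploiting the fact that a maximum dominates each of its arguments. The starting point is a purely combinatorial observation: since $g\in\mathcal{G}_0$, the number of false null hypotheses in group $g$ satisfies $k_g<u_g$, so the number of true null hypotheses in $A_g$ is at least $n_g-k_g\geq n_g-u_g+1$. Consequently there exists a fixed (though unknown) subgroup $A^*\subseteq A_g$ with $|A^*|=n_g-u_g+1$ consisting \emph{entirely} of true null hypotheses; this $A^*$ is therefore a null group, so the validity of $P^{1/(n_g-u_g+1)}[A^*]$ and the assumed dependency control condition for it both apply.

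First I would record the pointwise domination. Because $A^*$ is one of the subgroups over which the maximum defining $P_g^{u_g/n_g}$ is taken, we have $P_g^{u_g/n_g}\geq P^{1/(n_g-u_g+1)}[A^*]$ surely. Hence, for every $c>0$, the inclusion of events $\{P_g^{u_g/n_g}\leq c\beta(U)\}\subseteq\{P^{1/(n_g-u_g+1)}[A^*]\leq c\beta(U)\}$ holds pointwise, so the corresponding indicators are ordered.

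The conclusion then follows immediately by applying the assumed dependency control condition to the null group $A^*$: for every $c>0$,
\begin{align*}
\EE{\frac{\ind(P_g^{u_g/n_g}\leq c\beta(U))}{U}}\leq \EE{\frac{\ind(P^{1/(n_g-u_g+1)}[A^*]\leq c\beta(U))}{U}}\leq c,
\end{align*}
where the first inequality uses the ordering of indicators and the second is precisely the dependency control condition for the couple $(P^{1/(n_g-u_g+1)}[A^*],U)$ assumed in the hypotheses. Since this holds for all $c>0$, the couple $(P_g^{u_g/n_g},U)$ satisfies the dependency control condition with shape function $\beta$, as required.

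There is essentially no analytic obstacle here: the heart of the matter is the combinatorial guarantee, furnished by the assumption $g\in\mathcal{G}_0$, that a fully null subgroup of size exactly $n_g-u_g+1$ exists. The remaining ingredients are the pointwise domination of a maximum by any one of its arguments and a direct appeal to the dependency control condition assumed for that subgroup; the hypothesis $\beta(0)=0$, together with the convention of Remark~\ref{technical}, serves only to make the ratios well defined on the event $\{U=0\}$, where the indicator ordering continues to hold.
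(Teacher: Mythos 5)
Your proof is correct, and it is a genuine streamlining of the paper's argument. Both proofs rest on the same two pillars: the combinatorial fact that $g\in\mathcal{G}_0$ guarantees a fully null subgroup of size $n_g-u_g+1$, and the observation that the maximum defining $P_g^{u_g/n_g}$ dominates the global null $p$-value of any such subgroup. The paper, however, takes a detour: it first replaces the maximum over all subgroups by the maximum over the collection $\mathcal{B}$ of \emph{all} null subgroups, then bounds the indicator of that maximum by the average $\frac{1}{|\mathcal{B}|}\sum_{A\in\mathcal{B}}\ind\left(P^{1/(n_g-u_g+1)}[A]\leq c\beta(U)\right)$, and finally applies the dependency control condition to every $A\in\mathcal{B}$, so that the bound $c$ emerges as an average of $|\mathcal{B}|$ bounds. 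You instead fix a single null subgroup $A^*$ and use the event inclusion $\{P_g^{u_g/n_g}\leq c\beta(U)\}\subseteq\{P^{1/(n_g-u_g+1)}[A^*]\leq c\beta(U)\}$ directly, which requires only one application of the dependency control condition and eliminates the averaging inequality altogether. The two routes give the same constant $c$, so nothing is lost; your version is shorter and makes clearer that only the existence of one null subgroup matters. One small point worth spelling out: on the event $\{U=0\}$, the convention of Remark \ref{technical} applies to the left-hand ratio precisely because $\PP{P_g^{u_g/n_g}\leq 0}\leq\PP{P^{1/(n_g-u_g+1)}[A^*]\leq 0}=0$ by validity of the null-group $p$-value and the domination — you assert this implicitly, and it deserves one explicit sentence, since it is what licenses dividing the ordered indicators by $U$.
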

The proof of this lemma is given in the end of this section. Note that the condition $\beta(0)=0$ holds for the shape functions considered in Lemmas \ref{ov-PRDS1} and \ref{lemma-meta}, i.e. for the identity shape function and for the shape function of form (\ref{beta}). The proof of Lemma \ref{ov-PRDS1} is based on the techniques of the proof of the group-level super-uniformity lemma of \cite{RetJ17}.
\paragraph{Proof of item 1.}
	Let $c$ be a positive constant, and $g\in \mathcal{G}_0.$ According to the assumptions of item 1, 
	$$P^{u_g/n_g}=\max\{P^{1/(n_g-u_g+1)}[A]: A\subseteq A_g, |A_g|=n_g-u_g+1\},$$
	where $P^{1/(n_g-u_g+1)}[A]$ is the 
	 minimum adjusted $p$-value based on the multiple testing procedure $\mathcal{M}_g$ applied on the $p$-values of hypotheses with indices in $A,$ and $\mathcal{M}_g$ is non-increasing and is self-consistent with respect to thresholds of form $\Delta(r)=r\alpha/m.$ Since it is assumed that the $p$-values satisfy the PRDS property on the subset of true null hypotheses, we obtain from Propositions 2.7 and 3.6 in \cite{blanchard2008two} that $\mathcal{M}_g$ controls the FDR when applied on any subset of $p$-values. Therefore, according to Corollary \ref{connection-cor}, for any null group   $A\subseteq A_g$ such that $|A_g|=n_g-u_g+1,$ $P^{1/(n_g-u_g+1)}[A]$  is a valid $p$-value for testing the global null for group $A,$ i.e. its distribution is uniform or is stochastically larger than uniform.

	Let $f:[0,1]^M\rightarrow [0, \infty)$ be a non-increasing function. According to Lemma \ref{help1-u}, it is enough to prove that for any null group   $A\subseteq A_g$ such that $|A_g|=n_g-u_g+1,$ the pair 
	 $$(P^{1/(n_g-u_g+1)}[A], f(\bm p)) $$ satisfies the dependency control condition with the shape function $\beta(x)=x.$ Let $A\subseteq A_g$ be a null group such that $|A_g|=n_g-u_g+1.$ Note that such group exists, because $g\in \mathcal{G}_0,$ yielding that the number of true null hypotheses in group $g$ is not smaller than $n_g-u_g+1.$ 
	 Let $c$ be a positive constant. We need to prove that
	 $$\EE{\frac{\ind(P^{1/(n_g-u_g+1)}[A]\leq cf(\bm p))}{f(\bm p)}}\leq c.$$ 
	Note that the event 	 $\{P^{1/(n_g-u_g+1)}[A]\leq cf(\bm p) \} $
is equivalent to the event in which $\mathcal{M}_g$ makes at least one rejection when it is applied on the $p$-values belonging to group $A$ at level $cf(\bm p).$
Let $\mathcal{R}_g^A$ be the set of indices of hypotheses rejected by $\mathcal{M}_g$ when it is applied on the $p$-values of group $A$ at level $cf(\bm p),$
and let $|\mathcal{R}_g^A|$ be their number.
Then 
\begin{align}\ind(\{P^{1/(n_g-u_g+1)}[A]\leq cf(P)\})=\ind(|\mathcal{R}_g^A|>0)=\frac{|\mathcal{R}_g^A|}{|\mathcal{R}_g^A|}.\label{self-cons-1} \end{align}
The last equality follows from the fact that we define $0/0=0,$ so the fraction on the right hand side is equal to 1 if $|\mathcal{R}_g^A|>0,$ and  is equal to 0 otherwise. Since $\mathcal{M}_g$ is self-consistent with respect to thresholds of form (\ref{non-adapt}) with unity weights and identity shape function, we obtain that $$\mathcal{R}_g^A\subseteq \{j\in A: p_j\leq |\mathcal{R}_g^A|cf(\bm p)/|A|\},$$
yielding that 
\begin{align}|\mathcal{R}_g^A|\leq \sum_{j\in A}\ind(p_j\leq |\mathcal{R}_g^A|cf(\bm p)/|A|).\label{self-cons-2}\end{align}
Note that if $|\mathcal{R}_g^{A}|=0,$ then $\sum_{j\in A}\ind(p_j\leq |\mathcal{R}_g^A|cf(\bm p)/|A|)=0$ almost surely, because for each $j\in A,$ $p_j$ is uniform or is stochastically larger than uniform, so $\PP{p_j\leq 0}=0.$ Therefore, according to Remark \ref{technical}, if $|\mathcal{R}_g^{A}|=0,$ the fraction
$$\frac{\sum_{j\in A}\ind(p_j\leq |\mathcal{R}_g^A|cf(\bm p)/|A|)}{|\mathcal{R}_g^A|}$$ is defined as 0.
It follows that we can divide both sides of (\ref{self-cons-2}) by $|R_g^A|,$ and the inequality will be preserved.
Therefore, 
combining (\ref{self-cons-1}) and (\ref{self-cons-2}), we obtain
\begin{align}
\ind(P^{1/(n_g-u_g+1)}[A]\leq cf(\bm p))\leq \frac{\sum_{j\in A}\ind(p_j\leq |\mathcal{R}_g^A|cf(\bm p)/|A|)}{|\mathcal{R}_g^A|}\label{last-self-cons}
\end{align}
Using the fact that $P^{1/(n_g-u_g+1)}[A]$ is uniform or is stochastically larger than uniform, and using the same argument as above, it is legitimate to divide  both expressions in (\ref{last-self-cons}) by $f(\bm p),$ preserving the inequality. Taking   expectations on both sides, we obtain:
\begin{align}
\EE{\frac{\ind(P^{1/(n_g-u_g+1)}[A]\leq cf(\bm p))}{f(\bm p)}}\leq \EE{ \frac{\sum_{j\in A}\ind(p_j\leq |\mathcal{R}_g^A|cf(\bm p)/|A|)}{|\mathcal{R}_g^A|f(\bm p)}}\label{key-self-cons}
\end{align}
Since $\mathcal{M}_g$ is non-increasing, $|\mathcal{R}_g^A|$ is non-increasing in each $p$-value in $A.$ In addition, $f(\bm p)$ is non-increasing in each $p$-value, therefore the function $g(\bm p)\equiv c|\mathcal{R}_g^A|f(\bm p)/|A|$ is non-increasing in each $p$-value. Since the $p$-values satisfy the PRDS property on the subset of true null hypotheses, and $A$ is a null group, we obtain from the super-uniformity lemma of \cite{RetJ17}, item 1, for each $j\in A,$
$$\EE{\frac{\ind(p_j\leq g(\bm p))}{g(\bm p)}}\leq 1.$$
Combining this inequality with (\ref{key-self-cons}), we obtain
\begin{align}
\EE{\frac{\ind(P^{1/(n_g-u_g+1)}[A]\leq cf(\bm p))}{f(\bm p)}}&\leq \EE{ \frac{\sum_{j\in A}\ind(p_j\leq |\mathcal{R}_g^A|cf(\bm p)/|A|)}{|\mathcal{R}_g^A|f(\bm p)}}\notag\\&=\sum_{j\in A} \EE{\frac{\ind(p_j\leq |\mathcal{R}_g^A|cf(\bm p)/|A|)}{|\mathcal{R}_g^A|f(\bm p)}}\label{int-1}\\&\leq\sum_{j\in A}\frac{c}{|A|}\EE{ \frac{\ind(p_j\leq g(\bm p))}{g(\bm p)}}\notag\\&\leq c\notag
\end{align}
This completes the proof. For the last step of the proof, we could rely on the results of \cite{blanchard2008two} rather on the superuniformity lemma of \cite{RetJ17}. Specifically, using the arguments in the proof of Proposition 3.6 in \cite{blanchard2008two}, one can show that for each $j\in A,$ the pair $(p_j,|\mathcal{R}_g^A|f(\bm p) )$ satisfies the dependency control condition with respect to the identity shape function $\beta(x)=x,$ which yields that for each $j\in A,$  $$\EE{\frac{\ind(p_j\leq |\mathcal{R}_g^A|cf(\bm p)/|A|)}{|\mathcal{R}_g^A|f(\bm p)}}\leq \frac{c}{|A|}.$$
Combining this inequality with (\ref{int-1}), the result follows.
\paragraph{Proof of item 2.}
Let $g\in \mathcal{G}_0$ and let $h: [0,1]^
{M-n_g}\rightarrow [0,\infty)$ be a non-increasing function. We need to prove that the pair $(P^{u_g/n_g}, h(\bm p^{-g}))$ satisfies the dependency control condition with respect to the shape function $\beta(x)=x.$ Based on Lemma \ref{help1-u}, it is enough to prove that for any null group $A\subseteq A_g$ such that $|A|=n_g-u_g+1,$ the pair $(P^{1/(n_g-u_g+1)}[A], \, h(\bm p^{-g}))$ satisfies the dependency control condition. 	Let $A\subseteq A_g$ be a null group such that $|A|=n_g-u_g+1.$ Such group exists since $g\in \mathcal{G}_0,$ as explained in the proof of item 1 of Lemma \ref{ov-PRDS1}. Let $c$ be a positive constant. According to the assumptions, $P^{1/(n_g-u_g+1)}[A]$ is the minimum adjusted $p$-value of a multiple testing procedure $\mathcal{M}_g$ applied on the $p$-values of $A,$
and $\mathcal{M}_g$ is non-increasing and self-consistent with respect to thresholds of form $\Delta(i, r)=r\alpha/(\hat{\pi}_0(\bm p^{A})|A|),$ where $\bm p^{A}$ is the vector of $p$-values for the hypotheses in group $A,$ and $\hat{\pi}_0(\bm p^{A})$ is an estimator for the proportion of true null hypotheses in group $A,$ $\pi_0^A.$ It is assumed that the function $\hat{\pi}_0$ satisfies Condition \ref{cond-adapt}.  
Since the $p$-values are assumed to be independent, according to Theorem 11 of \cite{blan2009} we obtain that $\mathcal{M}_g$ controls the FDR when applied on the $p$-values of $A,$ therefore $P^{1/(n_g-u_g+1)}[A]$ is a valid global null $p$-value for group $A.$
Let $\mathcal{R}_g^{A}$ be the set of rejections made by the multiple testing procedure $\mathcal{M}_g$ when it is applied on the $p$-values of group $A$ at level $ch(\bm p^{-g}),$ and let $|\mathcal{R}_g^{A}|$ be their number. Using similar arguments to those leading to (\ref{key-self-cons}), we obtain 
\begin{align}
&\EE{\frac{\ind(P^{1/(n_g-u_g+1)}[A]\leq ch(\bm p^{-g}))}{h(\bm p^{-g})}}\leq \notag\\&\sum_{i\in A}\EE{\frac{\ind(p_i\leq |\mathcal{R}_g^{A}|ch(\bm p^{-g})\hat{\pi}_{0}^{-1}(\bm p^{A})/|A|)}{h(\bm p^{-g})|\mathcal{R}_g^{A}|}},\label{zero-adapt}
\end{align}
 Let $i\in A$ be arbitrary fixed. Let $\bm p^A_{(0,i)}$ be the vector of $p$-values with indices in $A,$ where $p_i$ is replaced by 0. Since the function $\hat{\pi}_{0}^{-1}$ is non-increasing in each $p$-value,  $\hat{\pi}_{0}^{-1}(\bm p^A)\leq \hat{\pi}^{-1}(\bm p^A_{(0, i)}). $ Therefore,
\begin{align}
&\EE{\frac{\ind(p_i\leq |\mathcal{R}_g^A|ch(\bm p^{-g})\hat{\pi}_{0}^{-1}(\bm p_A)/|A|)}{h(\bm p^{-g})|\mathcal{R}_g^A|}}\leq\notag\\& \EE{\frac{\ind(p_i\leq |\mathcal{R}_g^A|ch(\bm p^{-g})\hat{\pi}_{0}^{-1}(\bm p^A_{(0, i)})/|A|)}{h(\bm p^{-g})|\mathcal{R}_g^A|}}.\label{first-adapt}
\end{align}
Recall that $\bm p_g^{-i}$ is the vector of the $p$-values in group $g,$ excluding $p_i.$ For fixed $\bm p_g^{-i},$ the $p$-values for the hypotheses in $A\setminus\{i\}$ are fixed, so $|\mathcal{R}_g^A|$ is a non-increasing function of $p_i,$ and $\hat{\pi}_{0}^{-1}(\bm p^A_{(0,i)})$ is fixed. In addition, $h(\bm p^{-g})$ is non-increasing in each entry of the vector $\bm p^{-g}.$ This yields that for fixed $\bm p_g^{-i},$  the function $c|\mathcal{R}_g^A|h(\bm p^{-g})\hat{\pi}_{0}^{-1}(\bm p_{(0, i)}^{A})/|A|$ is non-increasing in each coordinate of $\bm p.$ Since there is independence within each group, $p_i$ is independent of $\bm p_g^{-i},$ therefore the conditional distribution of $p_i$ given $\bm p_g^{-i}$ is uniform or is stochastically larger than uniform. Due to conditional positive dependence across groups, as defined in item D4 of Section \ref{setting},  the vector $\bm p$ is PRDS with respect to $p_i,$ conditionally on $\bm p_g^{-i}.$ Therefore, using item (b) of super-uniformity lemma of \cite{RetJ17},  we obtain
\begin{align*}
\EE{\frac{\ind(p_i\leq |\mathcal{R}_g^A|ch(\bm p^{-g})\hat{\pi}_{0}^{-1}(\bm p^A_{(0,i)})/|A|)}{|\mathcal{R}_g^A|ch(\bm p^{-g})\hat{\pi}_{0}^{-1}(\bm p^A_{(0,i)})/|A|}\,\,|\,\,\bm p_g^{-i}}\leq 1,
\end{align*}
which yields
\begin{align}&\EE{\frac{\ind(p_i\leq |\mathcal{R}_g^A|ch(\bm p^{-g})\hat{\pi}_{0}^{-1}(\bm p^A_{(0,i)})/|A|)}{h(\bm p^{-g})|\mathcal{R}_g^A|}\,\,|\,\,\bm p^{-i}_g}=\notag\\&\frac{c\hat{\pi}_{0}^{-1}(\bm p^A_{(0,i)})}{|A|}\EE{\frac{\ind(p_i\leq |\mathcal{R}_g^A|ch(\bm p^{-g})\hat{\pi}_{0}^{-1}(\bm p^A_{(0,i)})/|A|)}{|\mathcal{R}_g^A|ch(\bm p^{-g})\hat{\pi}_{0}^{-1}(\bm p^A_{(0,i)})/|A|}\,\,|\,\,\bm p_g^{-i}}\leq\\& \frac{c\hat{\pi}_{0}^{-1}(\bm p^A_{(0,i)})}{|A|}\label{intermediate}
\end{align}
Using  (\ref{intermediate}) we obtain 
\begin{align}&\EE{\frac{\ind(p_i\leq |\mathcal{R}_g^A|ch(\bm p^{-g})\hat{\pi}_{0}^{-1}(\bm p^A_{(0,i)})/|A|)}{h(\bm p^{-g})|\mathcal{R}_g^A|}}=\\&\EE{\EE{\frac{\ind(p_i\leq |\mathcal{R}_g^A|ch(\bm p^{-g})\hat{\pi}_{0}^{-1}(\bm p^A_{(0,i)})/|A|)}{h(\bm p^{-g})|\mathcal{R}_g^A|}}\,\,|\,\,\bm p_g^{-i}}\leq\notag\\&\frac{c}{|A|}\EE{\hat{\pi}_0^{-1}(\bm p^A_{(0,i)})}\label{last-adapt}
\end{align} 
Combining (\ref{last-adapt}) with (\ref{zero-adapt}) and (\ref{first-adapt}), we obtain
$$\EE{\frac{\ind(P^{1/(n_g-u_g+1)}\leq ch(\bm p^{-g}))}{h(\bm p^{-g})}}\leq\frac{c}{|A|}\sum_{i\in A}\EE{\hat{\pi}_0^{-1}(\bm p^A_{(0, i)})}.$$
Using the assumption that the estimator $\hat{\pi}_0$ satisfies Condition \ref{cond-adapt}, we obtain that for any $i\in A,$ $\EE{1/\hat{\pi}_0(\bm p^A_{(0,i)})}\leq 1/\pi_0^A.$ Since $A$ is a group consisting only of true null hypotheses, $\pi_0^A=1,$ therefore for any $i\in A,$ $\EE{\hat{\pi}_0^{-1}(\bm p^A_{(0,i)})}\leq 1.$ 
Therefore, we obtain
$$\EE{\frac{\ind(P^{1/(n_g-u_g+1)}[A]\leq ch(\bm p^{-g}))}{h(\bm p^{-g})}}\leq\frac{c}{|A|}\sum_{i\in A}\EE{\hat{\pi}_0^{-1}(\bm p^A_{(0,i)})}\leq c.$$
This completes the proof.
\paragraph{Proof of item 3.} Let $g\in \mathcal{G}_0$ and $h: [0,1]^{M-n_g}\rightarrow[0,\infty).$ Let $A\subseteq A_g$ be a null group such that $|A|=n_g-u_g+1$ (which exists, because $g\in \mathcal{G}_0$).
Based on Lemma \ref{help1-u}, it is enough prove that that the pair 
$(P^{1/(n_g-u_g+1)}[A], \, h(\bm p^{-g}))$ satisfies the dependency control condition
with respect to shape function $\beta(x)=x.$
	Let $c$ be a positive constant. Since 
	$P^{1/(n_g-u_g+1)}[A]$ is based on Bonferroni, $P^{1/(n_g-u_g+1)}[A]=|A|\min\{p_i: i\in A\}.$ Therefore, \begin{align}
\EE{\frac{\ind(P^{1/(n_g-u_g+1)}\leq ch(\bm p^{-g}))}{h(\bm p^{-g})}}&\leq 
\EE{\frac{\ind(\min\{p_i: i\in A\}\leq ch(\bm p^{-g})/|A|)}{h(\bm p^{-g})}}
\notag\\&\leq \sum_{i\in A}\EE{\frac{\ind(p_i\leq ch(\bm p^{-g})/|A|)}{h(\bm p^{-g})}}\label{prev}\\&=\sum_{i\in A}\frac{c}{|A|}\EE{\frac{\ind(p_i\leq ch(\bm p^{-g})/|A|)}{ch(\bm p^{-g})/|A|}}\label{zero-adapt1}
\end{align}
Let $i\in A$ be arbitrary fixed. Since $A$ consists only of true null hypotheses, $p_i$ corresponds to a true null hypothesis for each $i\in A.$ Therefore, the vector $(p_i, \bm p^{-g})$ is PRDS with respect to $p_i,$ according to the assumption of positive dependence across groups, as defined in item D5 in Section \ref{setting}. Using the fact that $ch(\bm p^{-g})/|A|$ is a coordinate-wise non-increasing function of $\bm p^{-g},$ we obtain from item (b) of the super-uniformity lemma of \cite{RetJ17}  that 
$$\EE{\frac{\ind(p_i\leq ch(\bm p^{-g})/|A|)}{ch(\bm p^{-g})/|A|}}\leq 1.$$
Combining this inequality with (\ref{zero-adapt1}), we obtain
\begin{align}
\EE{\frac{\ind(P^{1/(n_g-u_g+1)}[A]\leq ch(\bm p^{-g}))}{h(\bm p^{-g})}}&\leq 
 \sum_{i\in A}\frac{c}{|A|}=c,\label{final}
\end{align}
which completes the proof. As in item 1, we could use the results of \cite{blanchard2008two} rather than those of \cite{RetJ17} for the last step of the proof. Specifically, using similar arguments to those used in the proof of Proposition 3.6 of \cite{blanchard2008two}, we could show that for each $i\in A,$
$(p_i, h(\bm p^{-g}))$ satisfies the dependency control condition with respect to shape function $\beta(x)=x,$ which yields that
$$\EE{\frac{\ind(p_i\leq ch(\bm p^{-g})/|A|)}{h(\bm p^{-g})}}\leq \frac{c}{|A|}.$$ Using this result, we obtain (\ref{final}) from (\ref{prev}).
\begin{proof}[Proof of Lemma \ref{help1-u}]
Let $c$ be a positive constant and let $g\in \mathcal{G}_0.$ We need to show that 
$$\EE{\frac{\ind\{P_g^{u_g/n_g}\leq c\beta(U)\}}{U}}\leq c.$$
By assumption,
\begin{align}
P^{u_g/n_g}_g=\max\left\{P^{1/(n_g-u_g+1)}_g: A\subseteq A_g, |A|=n_g-u_g+1\right\}.\label{def}
\end{align}
Let $A_g^0$ be the indices of true null hypotheses belonging to group $g.$ Since $g\in \mathcal{G}_0,$ $H_0^{u_g/n_g}$ is a true null hypothesis, therefore
$|A_g^0|\geq n_g-u_g+1,$ and there exists a set $A\subseteq A_g^0$ such that $|A|=n_g-u_g+1.$ Thus we obtain
\begin{align}&\ind\left[\max\left\{P^{1/(n_g-u_g+1)}[A]: A\subseteq A_g, |A|=n_g-u_g+1\right\}\leq c\beta(U)\right]\leq\notag\\& \ind\left[\max\left\{P^{1/(n_g-u_g+1)}[A]: A\subseteq A_g^0, |A|=n_g-u_g+1\right\}\leq c\beta(U)\right]\label{subset}\end{align}
Combining (\ref{def}) and (\ref{subset}), we obtain
\begin{align}
&\EE{\frac{\ind\left\{P^{u_g/n_g}_g\leq c\beta(U)\right\}}{U}}\leq 
\notag\\& \EE{\frac{\ind\left(\max\{P^{1/(n_g-u_g+1)}[A]: A\subseteq A_g^0, |A|=n_g-u_g+1\}\leq c\beta(U)\right)}{U}}\label{first}
\end{align}
Let $\mathcal{B}=\{A\subseteq A_g^0: |A|=n_g-u_g+1\}$ be the set of all the subsets of $A_g^0$ of size $n_g-u_g+1.$ As shown above, this set is non-empty. It is easy to show that 
\begin{align}
    &\ind\left[\max\left\{P^{1/(n_g-u_g+1)}[A]: A\in \mathcal{B}\right\}\leq c\beta(U)\right]\leq \notag\\&\frac{1}{|\mathcal{B}|}\sum_{A\in \mathcal{B}}\ind\left(P^{1/(n_g-u_g+1)}[A]\leq c\beta(U)\right)\label{main-ineq1}
\end{align}
Indeed, since the expression on the right hand side is non-negative, it is enough to show the inequality above only for the case where the indicator on the left hand side is equal to one. In this case, for each $A\subseteq 
A_g^0$ such that $|A|=n_g-u_g+1,$ it holds that
$$P^{1/(n_g-u_g+1)}[A]\leq c\beta(U).$$ Therefore, in this case the expression on the right hand side is equal to 1, and thus we have proven the inequality in (\ref{main-ineq1}). According to our assumptions, for each $A\in \mathcal{B},$ the distribution of $P^{1/(n_g-u_g+1)}[A]$ is either uniform or is stochastically larger than uniform, so $\PP{P^{1/(n_g-u_g+1)}[A]\leq 0}=0.$ Therefore, the same is true for $\max\left\{P^{1/(n_g-u_g+1)}[A]: A\in \mathcal{B}\right\}.$ On the other hand, it is assumed that $\beta(0)=0.$ Therefore, according to Remark \ref{technical}, we can divide both sides of (\ref{main-ineq1}) while preserving the inequality. Dividing both expressions by $U,$ we obtain
\begin{align}
    &\frac{\ind\{(\max\{P^{1/(n_g-u_g+1)}[A]: A\in \mathcal{B}\}\leq c\beta(U))\}}{U}\leq \notag\\&\frac{1}{|\mathcal{B}|}\sum_{A\in \mathcal{B}}\frac{\ind\left(P^{1/(n_g-u_g+1)}[A]\leq c\beta(U)\right)}{U}\label{main-ineq}
\end{align}
Since it is assumed that for each $A\in\mathcal{B},$ the pair $(P_g^{1/(n_g-u_g+1)}[A], U)$ satisfies the dependency control condition with shape function $\beta,$ for each $A\in\mathcal{B}$ one has
$$\EE{\frac{\ind\left\{P_g^{1/(n_g-u_g+1)}[A]\leq c\beta(U)\right\}}{U}}\leq c.$$
Therefore, taking expectations of the expressions on both sides of inequality in (\ref{main-ineq}), we obtain
\begin{align}
    &\EE{\frac{\ind\left(\max\left\{P^{1/(n_g-u_g+1)}[A]: A\in \mathcal{B}\right\}\leq c\beta(U)\right)}{U}}\leq \notag\\&\frac{1}{|\mathcal{B}|}\sum_{A\in \mathcal{B}}\EE{\frac{\ind\left(P^{1/(n_g-u_g+1)}[A]\leq c\beta(U)\right)}{U}}\leq c\notag
\end{align}
Combining this result with (\ref{first}), we obtain
$$\EE{\frac{\ind\{P^{u_g/n_g}_g\leq c\beta(U)\}}{U}}\leq c ,$$
which completes the proof. 
\end{proof}
\subsection{Proof of Lemma \ref{lemma-meta}}
\paragraph{Proof of item 1.}
Let us first assume that $P_i^{u/n}$ is connected to a multiple testing procedure in the sense of (\ref{PCM}), which satisfies condition (c) of item 1 of Theorem \ref{min-adj-non-adapt}. The meta-analysis setting can be viewed as a group setting of Section \ref{setting}, where the $n$ hypotheses for each feature constitute a group. According to Lemma \ref{lemma-prob}, the $p$-values satisfy the overall positive dependence condition, as defined in item D3 in Section \ref{setting}. Therefore, according to item 1 of Lemma \ref{ov-PRDS1}, the pair $(P_i^{u/n}, f(\bm p))$ satisfies the dependency control condition with respect to the identity shape function $\beta(x)=x.$ 

Let us now assume that $P_i^{u/n}$ is based on Fisher's or on Stouffer's methods. In this case the proof is similar to the proof of Theorem 3 in \cite{BH08}. If  $P_i^{u/n}$ is based on Fisher's method, then 
\begin{align}P_i^{u/n}=\max\{P_i^{1/n-u+1}[A]: A\subseteq \{1, \ldots, n\}, |A|=n-u+1\},\label{def_PC}\end{align}
where 
\begin{align}P_i^{1/n-u+1}[A]=\PP{\chi^2_{2(n-u+1)}\geq -2\sum_{j\in A}\log p_{ij}}.\label{fish}\end{align}
If $P_i^{u/n}$ is based on Stouffer's method,
then $P_i^{u/n}$ is given by (\ref{def_PC}), where 
\begin{align}P_i^{1/n-u+1}[A]=\Phi\left\{\frac{\sum_{j\in A}\Phi^{-1}(p_{ij})}{\sqrt{n-u+1}}\right\}.\label{stouff}\end{align}
According to Lemma \ref{help1-u}, it is enough to show that for any subset of indices $A\subseteq \{1, \ldots, n\}$ such that $|A|=n-u+1,$ and $H_{ij}$ is a true null hypothesis for each $j\in A,$ the pair $(P_i^{1/n-u+1}[A], f(\bm p))$ satisfies the dependency control condition with  the identity shape function $\beta(x)=x,$ when $P_i^{1/n-u+1}[A]$ is given by (\ref{fish}) or (\ref{stouff}). Let $A\subseteq \{1, \ldots, n\}$ be such a subset (which exists, because $P_i^{u/n}$ is a true null hypothesis).
We shall rely on item 2 of Lemma 3.2 of \cite{blanchard2008two}, which we give below for completeness.
\begin{lem}[\cite{blanchard2008two}]\label{BR-08}
	Let $(U, V)$ be a couple of non-negative random variables such that $U$ is stochastically lower bounded by a uniform random variable on $[0,1],$ i.e. $\forall t\in[0,1],$ $\PP{U\leq t}\leq t.$ Then the pair $(U, V)$ satisfies the dependency control condition with the identity shape function $\beta(x)=x$ if for any $r\geq 0,$ the function $u\mapsto \PP{V<r\,|\,U\leq u}$ is non-decreasing.
\end{lem}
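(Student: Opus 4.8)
The goal is to verify the dependency control condition with $\beta(x)=x$, i.e. to show $\EE{\ind(U\le cV)/V}\le c$ for every $c>0$, using only the two hypotheses: the stochastic lower bound, which I will use in the form $\PP{U\le t}\le t$ for all $t\ge0$ (trivially $\PP{U\le t}\le 1\le t$ once $t\ge1$), and the monotonicity, which says that for each $r$ the map $u\mapsto\PPst{V<r}{U\le u}$ is nondecreasing. The plan is to expand the expectation over the values of $V$ and, for each value, to match the threshold on $U$ to that value. Assuming first that $V$ is supported on the positive integers (as in the applications, where $V=|\mathcal{R}|_{\bm v}$ or $|\mathcal{S}|_{\bm v}$ takes finitely many values, and the contribution of $\{V=0\}$ vanishes by Remark \ref{technical}), I would write
\[
\EE{\frac{\ind(U\le cV)}{V}}=\sum_{k\ge 1}\frac1k\,\PP{U\le ck,\ V=k}=\sum_{k\ge1}\frac{\PP{U\le ck}}{k}\,\PPst{V=k}{U\le ck},
\]
where we may restrict to those $k$ with $\PP{U\le ck}>0$. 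The first step is then to cancel the factor $1/k$ against the stochastic bound $\PP{U\le ck}\le ck$, which leaves $\EE{\ind(U\le cV)/V}\le c\sum_{k\ge1}\PPst{V=k}{U\le ck}$.

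The crux of the argument is to show that the remaining sum is at most $1$, and this is exactly where the monotonicity hypothesis enters; I regard the threshold-matching idea behind it as the main conceptual point. Writing $G_k(j):=\PPst{V\le j}{U\le ck}$, the hypothesis applied with $r=j+1$ says that $G_k(j)$ is nondecreasing in $k$ for each fixed $j$, since the conditioning threshold $ck$ increases with $k$. As $\PPst{V=k}{U\le ck}=G_k(k)-G_k(k-1)$, I would use the between-levels bound $-G_k(k-1)\le -G_{k-1}(k-1)$ and telescope:
\[
\sum_{k=1}^{K}\bigl(G_k(k)-G_k(k-1)\bigr)\le \sum_{k=1}^{K}\bigl(G_k(k)-G_{k-1}(k-1)\bigr)=G_K(K)-G_0(0)\le 1,
\]
where $G_0(0)=0$ because $V\ge1$, and $G_K(K)\le1$. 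Letting $K\to\infty$ gives $\sum_{k\ge1}\PPst{V=k}{U\le ck}\le1$, hence $\EE{\ind(U\le cV)/V}\le c$, which is the claim in the integer-valued case.

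For a general nonnegative $V$ the same mechanism persists, and this limiting step is the part I expect to require the most care. The plan is to discretize, setting $V^{(n)}=\lceil nV\rceil/n$, and to note two preservation facts: the $U$-marginal is unchanged, so the stochastic bound is untouched, and since $\{V^{(n)}<r\}$ is an event of the form $\{V<r'\}$, the map $u\mapsto\PPst{V^{(n)}<r}{U\le u}$ remains nondecreasing, so the monotonicity hypothesis is preserved. Applying the integer-valued case (after rescaling the grid) to $(U,V^{(n)})$ and passing to the limit by Fatou, using $\ind(U\le cV^{(n)})/V^{(n)}\to\ind(U\le cV)/V$ off the negligible set $\{U=cV\}$, then yields the general bound. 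The genuine obstacle here is handling this passage to the limit rigorously rather than the main estimate itself; conceptually the same computation can be phrased in continuous form by writing $\EE{\ind(U\le cV)/V}=\int_{(0,\infty)}r^{-1}\,dP(r)$ with $P(r)=\PP{U\le cV,\ V\le r}$, bounding the increment $dP(r)$ by $cr$ times the conditional law of $V$ given $\{U\le cr\}$ on the diagonal, and observing that for $g(r)=\PPst{V\le r}{U\le cr}$ the decomposition $g'=\partial_1G+\partial_2G$ of $G(s,r)=\PPst{V\le r}{U\le cs}$ has $\partial_1G\ge0$ by hypothesis, so the diagonal density is dominated by $g'$ and integrates to at most $g(\infty)-g(0)\le1$.
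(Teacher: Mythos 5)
The paper itself contains no proof of this lemma: it is imported verbatim from Blanchard and Roquain (item (ii) of Lemma 3.2 of \cite{blanchard2008two}) ``for completeness,'' so the only meaningful comparison is with the original argument there, which works directly in the continuum --- essentially a Fubini/Stieltjes integration-by-parts computation, writing $1/V=\int_V^\infty v^{-2}\,dv$ and bounding the resulting expression by $c\int dF$ with $F(v)=\PPst{V\le v}{U\le cv}$, the monotonicity hypothesis guaranteeing the cross terms have the right sign. Your proof is a correct, self-contained discrete counterpart of that argument: cancelling $1/k$ against $\PP{U\le ck}\le ck$ (valid for all $k$ since $\PP{U\le t}\le\min\{t,1\}\le t$) and then establishing $\sum_{k\ge1}\PPst{V=k}{U\le ck}\le 1$ by your telescoping is precisely Abel summation in place of integration by parts, and the reduction of the general case to the integer case by ceiling discretization plus Fatou is sound --- indeed slightly cleaner than you suggest, because $V^{(n)}\ge V$ forces $\ind(U\le cV^{(n)})=1$ on $\{U\le cV\}$ for every $n$, so no exceptional set $\{U=cV\}$ needs excluding. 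Two small points require patching, though nothing breaks: (1) $\{V^{(n)}<r\}$ is an event of the form $\{V\le a\}$ rather than $\{V<r'\}$; you recover monotonicity of $u\mapsto\PPst{V\le a}{U\le u}$ as a pointwise decreasing limit of the hypothesized maps $u\mapsto\PPst{V<a+1/m}{U\le u}$. (2) The boundary quantity $G_0(0)$, and any $G_k$ with $\PP{U\le ck}=0$, condition on null events; setting these to $0$ by convention preserves every inequality in the telescoping chain (and superuniformity at $t=0$ gives $U>0$ almost surely, which also disposes of the $\{V=0\}$ contribution under the paper's $0/0=0$ convention, including when the rescaled discretized variable $nV^{(n)}=\lceil nV\rceil$ takes the value $0$). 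Your closing ``continuous form'' paragraph, by contrast, is only heuristic --- it presumes densities and differentiability of $G$ that the hypotheses do not supply --- so the discretization route should be regarded as the actual proof, with the continuum sketch serving as intuition for what Blanchard and Roquain do rigorously via Stieltjes integrals.
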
 
The set $A$ consists only of true null hypotheses, and according to our assumptions, their $p$-values are independent and uniform on $[0,1].$ Therefore, $P_i^{1/n-u+1}[A]$ based on Fisher's or Stouffer's methods (given in (\ref{fish}) and (\ref{stouff}), respectively) is a a uniform random variable on $[0,1].$ According to Lemma \ref{BR-08} it is enough to show that for any $r\geq 0,$ the function $$u\mapsto \PP{f(\bm p)<r\,|\,P^{1/n-u+1}[A]\leq u}$$ is non-decreasing, where $P_i^{1/n-u+1}[A]$ is based on either Fisher's or Stouffer's methods. As shown by \cite{BY01} and \cite{blanchard2008two}, the above result follows if for any $r\geq 0,$ the function $u\mapsto \PP{f(\bm p)<r\,|\,P^{1/n-u+1}[A]= u}$ is non-decreasing, so it is enough to prove the latter. Let $r\geq 0$ be arbitrary fixed. 
Define $W:[0,1]^{n-u+1}\rightarrow [0,1]$ as follows:
$$W(x_1,\ldots, x_{n-u+1})=\PP{f(\bm p)<r\,|\,\forall k\in A:p_{ik}=x_k}.$$ Note that the set $D=\{\bm p: f(\bm p)<r\}$ is a non-decreasing set, since $f$ is a non-increasing function. Therefore, using the result of item 3 of Lemma \ref{lemma-prob} and the fact that the $p$-values within each row are independent,  we obtain that $W(x_1, \ldots, x_{n-u+1})$ is non-decreasing in each of its arguments. 
Note that for any $u\in [0,1],$ \begin{align}\PP{f(\bm p)<r\,|\,P^{1/n-u+1}[A]= u}=\EE{W(\bm p_i^{A})\,|\,P^{1/n-u+1}[A]=u},\label{main-w}\end{align} where $\bm p_i^{A}$
is the vector of $p$-values for the hypotheses in the set $\{H_{ij}, j\in A\}.$ We shall now use 
the following theorem of \cite{efron1965increasing}, addressing random variables with densities which are Polya frequency functions of order 2 ($PF_2$):
\begin{thm}[\cite{efron1965increasing}]\label{Efron}
	Let $X_1, \ldots, X_n$ be $n$ independent random variables with $PF_2$ densities $r_1(x), \ldots, r_n(x)$ respectively, and let $H(x_1, \ldots, x_n)$ be a real measurable function on Eucledean $n-$space which is non-decreasing in each of its arguments. Then $\EE{H(X_1, \ldots, X_n)|\sum_{i=1}^nX_i=y} $ is a non-decreasing function of $y.$
\end{thm}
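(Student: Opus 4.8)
The plan is to prove Efron's theorem by induction on $n$, reducing the whole statement to the case $n=2$. The only feature of $PF_2$ densities I will use is that log-concavity of each $r_i$ forces the translation family $r_i(\cdot-\theta)$ to have monotone likelihood ratio in its argument. I would first recall (and either cite or prove in one line via log-concavity) the classical closure fact that a convolution of $PF_2$ densities is again $PF_2$; this is exactly what makes the induction close. Throughout I read ``non-decreasing'' coordinatewise and work in the continuous case, replacing inverses by generalized inverses where needed.

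For $n=2$, let $X_1,X_2$ be independent with $PF_2$ densities $r_1,r_2$ and $S=X_1+X_2$. The conditional density of $X_1$ given $S=y$ is $p_y(x)\propto r_1(x)\,r_2(y-x)$, and for $y_1<y_2$ the ratio $p_{y_2}(x)/p_{y_1}(x)\propto r_2(y_2-x)/r_2(y_1-x)$ is non-decreasing in $x$ because $(\log r_2)'$ is non-increasing and $y_2-x>y_1-x$. Hence the conditional law of $X_1$ has monotone likelihood ratio increasing in $y$, so its quantile function $F_y^{-1}(v)$ is non-decreasing in $y$; the symmetric argument (swapping $r_1,r_2$) shows the conditional law of $X_2$ is MLR-increasing too. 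Now take $V\sim U(0,1)$ and set $X_1(y)=F_y^{-1}(V)$, $X_2(y)=y-X_1(y)$. The \emph{decisive observation} is that, since $\PPst{X_1\le x}{S=y}=1-\PPst{X_2\le y-x}{S=y}$, the coordinate $X_2(y)$ is precisely the $(1-V)$-quantile of the conditional law of $X_2$, hence is \emph{also} non-decreasing in $y$. Thus both coordinates increase with $y$ along this single coupling, $y\mapsto H(X_1(y),X_2(y))$ is non-decreasing for each $V$, and averaging over $V$ gives that $\EEst{H(X_1,X_2)}{S=y}$ is non-decreasing.

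For the inductive step, assume the statement for $n-1$ variables and put $T=\sum_{i=1}^{n-1}X_i$, which is independent of $X_n$ and has a $PF_2$ density by convolution closure. For fixed $x_n$ the slice $(x_1,\dots,x_{n-1})\mapsto H(x_1,\dots,x_{n-1},x_n)$ is coordinatewise non-decreasing, so the inductive hypothesis gives that $G(t,x_n):=\EEst{H(X_1,\dots,X_{n-1},x_n)}{T=t}$ is non-decreasing in $t$; it is trivially non-decreasing in $x_n$. Since $\EEst{H}{S=y}=\EEst{G(T,X_n)}{T+X_n=y}$ with $T,X_n$ independent $PF_2$ variables and $G$ coordinatewise non-decreasing, the already-proved $n=2$ case applied to the pair $(T,X_n)$ completes the induction.

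\textbf{Main obstacle.} Everything hard sits in the $n=2$ step, and the difficulty is that fixing the sum forces the two coordinates to compete (the conditional law is negatively associated), so naive devices fail: monotonicity of one coordinate's quantile alone is insufficient, and a symmetrization over the swap $x_1\leftrightarrow x_2$ yields an integrand whose sign is genuinely indefinite pointwise — one can exhibit monotone indicators such as $\ind\{x_2\ge c\}$ making it negative. What saves the argument is that the $PF_2$ hypothesis on \emph{both} marginals produces two compatible MLR statements, so the single quantile coupling of $X_1$ automatically pushes $X_2=S-X_1$ upward as well; equivalently, the quantile map $y\mapsto F_y^{-1}(v)$ is $1$-Lipschitz. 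Verifying this compatibility is the one place where log-concavity of both factors is essential, and it is the step I would write out in full detail.
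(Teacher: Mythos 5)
Your argument is essentially correct, but first note the context: the paper never proves Theorem \ref{Efron} at all --- it is quoted verbatim from \cite{efron1965increasing} and invoked as a black box in the proof of item 1 of Lemma \ref{lemma-meta} (applied there to the variables $g(p_{ij})$, which have $PF_2$ densities). So there is no internal proof to compare against; what you have written is a self-contained replacement for the citation. On its merits, the $n=2$ step is sound: the conditional density of $X_1$ given $S=y$ is proportional to $r_1(x)r_2(y-x)$, log-concavity of $r_2$ gives the MLR-in-$y$ property (hence stochastic monotonicity and monotone conditional quantiles), log-concavity of $r_1$ gives the symmetric statement for $X_2$, and the identity $G_y^{-1}(1-v)=y-F_y^{-1}(v)$ makes your single quantile coupling push both coordinates upward simultaneously, so $H$ along the coupling is monotone in $y$ pointwise in $V$ and integration finishes the case. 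The induction step is also correct: conditioning on $(T,X_n)$ and using independence reduces everything to the pair $(T,X_n)$, and closure of log-concavity under convolution keeps $T$ in the $PF_2$ class --- though this closure is a genuine theorem (Pr\'ekopa--Karlin), not a one-line consequence of log-concavity, so cite it rather than promise a quick proof. The remaining gaps are technical and fixable: your derivative computation for the likelihood ratio should be replaced by the four-point log-concavity inequality $r_2(s)r_2(v)\geq r_2(u)r_2(t)$ for $u\leq\min(s,v)$, $\max(s,v)\leq t$, $u+t=s+v$, since $\log r_2$ need not be differentiable; you need conventions for zeros of the densities in the MLR statement; and the induction requires a jointly measurable, coordinatewise non-decreasing version of $G(t,x_n)$, which exists but deserves a sentence. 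Finally, your coupling route is genuinely different from Efron's original argument, which runs through total positivity and variation-diminishing properties of $TP_2$ kernels; yours is more transparent probabilistically, at the price of the continuity/support assumptions needed for clean quantile inversion --- a price the paper's setting can pay, since there the relevant variables ($\log$ or $\Phi^{-1}$ of uniform $p$-values) have smooth positive densities.
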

Note that when $P_i^{1/(n-u+1)}[A]$ is based on either Fisher's or Stouffer's methods, $P_i^{1/(n-u+1)}[A]=G(\sum_{j\in A}g(p_{ij})),$ where $G(\cdot)$ and $g(\cdot)$ are certain strictly increasing functions. For Fisher's method, $G(x)=\PP{\chi^2_{2(n-u+1)}\geq -2x}$ and $g(x)=\log(x),$ while for Stouffer's method, $G(x)=\Phi(x/\sqrt{n-u+1})$ and $g(x)=\Phi^{-1}(x).$ 
Recall that $A$ consists only of true null hypotheses, therefore, according to our assumptions, the $p$-values in the set $\{p_{ij}, j\in A\}$ 
are independent and uniform on $[0, 1]$. 
Let us consider the random variables in the set $\{g(p_{ij}),\, j\in A\}.$ These random variables are independent. For each $j\in A,$ $g(p_{ij})$ is distributed as $\log(U)$ for Fisher's method, and $g(p_{ij})$ is distributed as $\Phi^{-1}(U)$  for Stouffer's method, where $U\sim U[0,1].$ Note that $-\log(U)$ has an exponential distribution, while $\Phi^{-1}(U)$ has a standard normal distribution. Therefore, for each $j\in A,$ $g(p_{ij})$ has a $PF_2$ density (see \cite{efron1965increasing}).
We shall consider the general case, where \begin{align}P_i^{1/(n-u+1)}[A]=G\left[\sum_{j\in A}g(p_{ij})\right],\label{main-g-fish}\end{align} where both $G(\cdot)$ and $g(\cdot)$ are strictly increasing functions, and the variables in the set $\{g(p_{ij}),\, j\in A\}$ are independent variables with $PF_2$ densities. As shown above, this case covers both Fisher's and Stouffer's combining functions. In this case the inverse functions $G^{-1}(\cdot)$ and $g^{-1}(\cdot)$ exist, and they are also strictly increasing. We define $H(x_1, \ldots, x_n)=W(g^{-1}(x_1), \ldots, g^{-1}(x_n)).$ Obviously,
$H(g(x_1), \ldots, g(x_n))=W(x_1, \ldots, x_n).$ Since $W(x_1, \ldots, x_n)$ is non-decreasing in each of its arguments, and $g^{-1}$ is strictly increasing, $H(x_1, \ldots, x_n)$ is also non-decreasing in each of its arguments.  Let $g(\bm p_i^{A})$ be the vector obtained by applying the function $g$ on each of its entries. Based on (\ref{main-w}) and (\ref{main-g-fish}), we obtain
\begin{align}&
\PP{f(\bm p)<r\,|\,P_i^{1/n-u+1}[A]= u}=\notag\\&\EE{W(\bm p_i^{A})\,|\,G\left\{\sum_{j\in A}g(p_{ij})\right\}=u}=\notag\\&\EE{H(g(\bm p_{i}^{A}))\,|\,\sum_{j\in A}g(p_{ij})= G^{-1}(u)}.\label{last-efron}
\end{align}
According to Theorem \ref{Efron}, the expression in (\ref{last-efron}) is non-decreasing in $G^{-1}(u),$ and since $G^{-1}$ is an increasing function, this expression is non-decreasing in $u.$ Thus we have proved that 
$$\PP{f(\bm p)<r\,|\,P_i^{1/n-u+1}[A]= u} $$
is non-decreasing in $u.$ As shown above,
this completes the proof.
\paragraph{Proof of item 2.}
Assume that $P_i^{u/n}$ is connected to a multiple testing procedure (in the sense of (\ref{PCM})), which satisfies condition (c) of item 2 of Theorem \ref{min-adj-non-adapt}. The meta-analysis setting can be viewed as a group setting of Section \ref{setting}, where the $n$ hypotheses for each feature constitute a group. According to Lemma \ref{lemma-prob}, the $p$-values are conditionally positively dependent across groups, and are independent within each group, in the sense of items D4 and D2  in Section \ref{setting}. Therefore, according to item 2 of Lemma \ref{ov-PRDS1}, the pair $(P_i^{u/n}, h(\bm{p}_{(-i)\cdot}))$ satisfies the dependency control condition with the identity shape function $\beta(x)=x.$ 
\bibliographystyle{plainnat}
\bibliography{Mybib}
\label{refs}
\end{document}